\numberwithin{equation}{section}
\pgfplotsset{compat=newest}
\pgfplotsset{plot coordinates/math parser=false}
\newlength\figureheight
\newlength\figurewidth 
\newcommand{\ds}{\displaystyle}
\def\nm{\noalign{\medskip}}
\newtheorem{thm}{Theorem}[section]
\newtheorem{rmk}{Remark}[section]
\newtheorem{definition}{Definition} [section]
\newtheorem{lem}{Lemma}[section]
\newtheorem*{rep@theorem}{\rep@title}
\newcommand{\newreptheorem}[2]{%
\newenvironment{rep#1}[1]{%
 \def\rep@title{#2 \ref{##1}}%
 \begin{rep@theorem}}%
 {\end{rep@theorem}}}
 \def\p{\partial}
\def \Vh0{\stackrel{\circ}{V}_h} \def\to{\rightarrow}
  \def\om{\omega}
\def\l{\label}  \def\f{\frac} \def\df{\dfrac} 
   \def\eps{\varepsilon}
\def\l|{\left|}
\def\r|{\right|}
\newcommand{\R}{\mathbb{R}}
\newcommand{\lc}
{\mathrel{\raise2pt\hbox{${\mathop<\limits_{\raise1pt\hbox
{\mbox{$\sim$}}}}$}}}
\newcommand{\gc}
{\mathrel{\raise2pt\hbox{${\mathop>\limits_{\raise1pt\hbox{\mbox{$\sim$}}}}$}}}
\newcommand{\ec}
{\mathrel{\raise2pt\hbox{${\mathop=\limits_{\raise1pt\hbox{\mbox{$\sim$}}}}$}}}
\def\be{\begin{equation}} \def\ee{\end{equation}}
\def\bea{\begin{eqnarray}}  \def\eea{\end{eqnarray}}
\def\beas{\begin{eqnarray*}} \def\eeas{\end{eqnarray*}}
\def\bn{\begin{enumerate}} \def\en{\end{enumerate}}
\def\bd{\begin{description}} \def\ed{\end{description}}
\title{Heat generation with plasmonic nanoparticles}
\date{}
\author{
Habib Ammari\thanks{\footnotesize Department of Mathematics, 
ETH Z\"urich, 
R\"amistrasse 101, CH-8092 Z\"urich, Switzerland (habib.ammari@math.ethz.ch, francisco.romero@sam.math.ethz.ch). }
\and
Francisco Romero\footnotemark[2]
\and   Matias Ruiz\thanks{\footnotesize Department of Mathematics and Applications,
Ecole Normale Sup\'erieure, 45 Rue d'Ulm, 75005 Paris, France
(matias.ruiz@ens.fr).}
}
\begin{document}
\maketitle

\begin{abstract}
In this paper we use layer potentials and asymptotic analysis techniques to analyze the heat generation due to nanoparticles when illuminated at their plasmonic resonance. We consider arbitrary-shaped particles and both single and multiple particles. For close-to-touching nanoparticles, we show that the temperature field deviates significantly from the one generated by a single nanoparticle. The results of this paper open a door for solving the challenging problems of detecting plasmonic nanoparticles in biological media and monitoring temperature elevation in tissue generated by nanoparticle heating. 

\end{abstract}

\medskip

\bigskip

\noindent {\footnotesize Mathematics Subject Classification
(MSC2000): 35R30, 35C20.}

\noindent {\footnotesize Key words: plasmonic nanoparticle, plasmonic resonance, heat generation, Neumann-Poincar\'e operator.}


\section{Introduction} \label{sec-introduction heat}

Our aim in this paper is to provide a mathematical and numerical framework for analyzing photothermal effects using plasmonic nanoparticles.  A remarkable feature of plasmonic nanoparticles is that they exhibit quasi-static optical resonances, called plasmonic resonances. At or near these resonant frequencies, strong enhancement of scattering and absorption occurs \cite{matias,matias2, SC10}.   The plasmonic resonances are related to the spectra of the non-self adjoint Neumann-Poincar\'e type operators associated with the particle shapes \cite{matias,matias2,kang1,hyeonbae, Gri12, kang3}. Plasmonic nanoparticles efficiently generate heat in the presence of electromagnetic radiation.  Their biocompatibility  makes them  suitable for use  in nanotherapy \cite{baffou2010mapping}. 

Nanotherapy relies on a simple mechanism. First nanoparticles become attached to tumor cells using selective biomolecular linkers. Then heat generated by optically-simulated plasmonic nanoparticles destroys the tumor cells \cite{heat1}.  In this nanomedical application, the temperature increase is the most important parameter \cite{heat3,heat4}.  It depends on a highly nontrivial way on the shape, the number, and organization of the nanoparticles.  Moreover, it is challenging to measure it at the surface of the nanoparticles \cite{heat1}. 

In this paper, we derive an asymptotic formula for the temperature at the surface of plasmonic nanoparticles of arbitrary shape. Our formula holds for clusters  of simply connected nanoparticles.
It allows to estimate the collective response of plasmonic nanoparticles.

The paper is organized as follows. 
In section \ref{sec-setteing heat} we describe the mathematical setting for the physical phenomena we are modeling. To this end, we use the Helmholtz equation to model the propagation of light which we couple to the heat equation. Later on, we present our main results in this paper which consist on original asymptotic formulas for the inner field and the temperature on the boundaries of the nanoparticles. In section \ref{sec-heat generation heat} we prove Theorems \ref{thm12d heat} and \ref{thm-Heat small volume heat}. These results clarify the strong dependency of the heat generation on the geometry of the particles as it depends on the eigenvalues of the Neumann-Poincar\'e operator. In section \ref{sec-numeric heat} we present numerical examples of the temperature at the boundary of single and multiple particles. Appendix \ref{append1 heat} is devoted to the asymptotic analysis of layer potentials for the Helmholtz equation in dimension two. We also include an analysis for the invertibility of the single-layer potential for the Laplacian for the case of multiple particles.

\section{Setting of the problem and the main results} \label{sec-setteing heat}
In this paper, we use the Helmholtz equation for modeling the propagation of light. This can be thought of as a special case of Maxwell's equations, when the incident wave $u^i$ is a transverse electric or transverse magnetic (TE or TM) polarized wave. This approximation, also called paraxial approximation \cite{paraxial}, is a good model for a laser beam which are used, in particular, in full-field optical coherence tomography. We will therefore model the propagation of a laser beam in a host domain (tissue), hosting a nanoparticle.

Let the nanoparticle occupy a bounded domain $D\Subset\R^2$ of class $\mathcal{C}^{1,\alpha}$ for some $0<\alpha<1$. Furthermore, let $D = z + \delta B$, where $B$ is centered at the origin and $|B| = O(1)$.

We denote by $\varepsilon_c(x)$ and $\mu_c(x)$, $x\in D$, the electric permittivity and magnetic permeability of the particle, respectively, both of which may depend on the frequency $\om$ of the incident wave. Assume that $\varepsilon_c(x) = \varepsilon_0\varepsilon_c'$, $\mu_c(x) = \mu_0\mu_c'$ and that $\Re \eps_c' <0, \Im \eps_c' >0, \Re \mu_c' <0, \Im \mu_c' >0$. Here and throughout, $\varepsilon_0$ and $\mu_0$ are the permittivity and permeability of vacuum.

Similarly, we denote by  $\varepsilon_m(x) = \varepsilon_0\varepsilon_m'$ and $\mu_m(x) = \mu_0\mu_m'$, $x\in \R^2\backslash \overline{D}$ the permittivity and permeability of the host medium, both of which do not depend on the frequency $\om$ of the incident wave. Assume that $\eps_m$ and $\mu_m$ are real and strictly positive.

The index of refraction of the medium (with the nanoparticle) is given by
\beas
n(x) = \sqrt{\varepsilon_c'\mu_c'}\chi(D)(x) +  \sqrt{\varepsilon_m'\mu_m'}\chi(\R^2\backslash \overline{D})(x),
\eeas
where $\chi$ denotes the indicator function. 


The scattering problem for a TE incident wave $u^i$ is modeled as follows:
\be \label{eq-Helm heat}
 \left\{
\begin{array} {ll}
&\ds \nabla \cdot \f{c^2}{n^2} \nabla  u+ \omega^2 u  = 0 \quad \mbox{in } \R^2\backslash \p D, \\
\nm
& u_{+} -u_{-}  =0   \quad \mbox{on } \partial D, \\
\nm
&  \ds \f{1}{\eps_{m}} \f{\p u}{\p \nu} \bigg|_{+} - \f{1}{\eps_{c}} \f{\p u}{\p \nu} \bigg|_{-} =0 \quad \mbox{on } \partial D,\\
\nm
&  u^s:= u- u^{i}  \,\,\,  \mbox{satisfies the Sommerfeld radiation condition at infinity},
\end{array}
 \right.
\ee
where $\f{\p }{\p \nu}$ denotes the outward normal derivative and $c = \f{1}{\sqrt{\eps_0\mu_0}} $ is the speed of light in vacuum.  We use the notation $ \f{\p }{\p \nu} \Big|_{\pm}$ indicating
$$ \f{\p u}{\p \nu}\Big|_{\pm}(x)= \lim_{t \to 0^+} \nabla u(x\pm t\nu(x))\cdot \nu(x),$$ 
with $\nu$ being the outward unit normal vector   to $\p D$. 

The interaction of the electromagnetic waves with the medium produces a heat flow of energy which translates into a change of temperature governed by the heat equation \cite{heat2}
\be \label{eq-heat heat}
 \left\{
\begin{array} {ll}
&\ds \rho C \df{\partial \tau }{\partial t} - \nabla \cdot \gamma \nabla \tau = \f{\om}{2\pi}\Im (\eps)| u|^2 \quad \mbox{in } ( \R^2\backslash \p D )\times(0,T), \\
\nm
&\tau_{+} -\tau_{-}  = 0   \quad \mbox{on } \partial D, \\
\nm
&  \ds  \gamma_m \f{\p \tau}{\p \nu} \bigg|_{+} -  \gamma_c \f{\p \tau}{\p \nu} \bigg|_{-} =0 \quad \mbox{on } \partial D,\\
\nm
&\tau(x,0) = 0,
\end{array}
 \right.
\ee
where $\rho = \rho_c\chi(D) +   \rho_m\chi(\R^2\backslash \overline{D})$ is the mass density, $C = C_c\chi(D) +   C_m\chi(\R^2\backslash \overline{D})$ is the thermal capacity, $\gamma =  \gamma_c\chi(D) +   \gamma_m\chi(\R^2\backslash \overline{D})$ is the thermal conductivity, $T\in \R$ is the final time of measurements and $\eps = \varepsilon_c\chi(D) +  \varepsilon_m\chi(\R^2\backslash \overline{D})$.

We further assume that $\rho_c, \rho_m, C_c, C_m, \gamma_c, \gamma_m$ are real positive constants.

Note that $\Im(\eps) = 0$ in $\R^2\backslash  \overline{D}$ and so, outside $D$, the heat equation is homogeneous.

The coupling of equations \eqref{eq-Helm heat} and \eqref{eq-heat heat} describes the physics of our problem.

We remark that, in general, the index of refraction varies with temperature; hence, a solution to the above equations would imply a dependency on time for the electric field $u$, which contradicts the time-harmonic assumption leading to model \eqref{eq-Helm heat}. Nevertheless, the time-scale on the dynamics of the index of refraction is much larger than the time-scale on the dynamics of the interaction of the electromagnetic wave with the medium. Therefore, we will not integrate a time-varying component into the index of refraction. 

Let $G(\cdot,k)$ be the Green function for the Helmholtz operator $\Delta +   k^2$ satisfying the Sommerfeld radiation condition. In dimension two, $G$ is given by
$$
G(x, k)= -\dfrac{i}{4}H_0^{(1)}( k|x|),
$$
where $H_0^{(1)}$ is the Hankel function of first kind and order $0$. We denote $G(x,y, k):= G(x-y, k)$.

Define the following single-layer potential and Neumann-Poincar\'e integral operator
\[
\begin{array}{lr}
\ds \mathcal{S}_{D}^{ k} [\varphi](x)=  \int_{\p D} G(x, y,  k) \varphi(y) d\sigma(y), & \quad x \in \p D \mbox{ or }  x \in \R^2,
\end{array}
\]
and 
\[
\begin{array}{lr}
\ds
(\mathcal{K}_{D}^{ k})^* [\varphi](x)  = \int_{\p D } \f{\p G(x, y,  k)}{ \p \nu(x)} \varphi(y) d\sigma(y) ,  & \quad x \in \p D.
\end{array}
\]

Let $I$ denote the identity operator and let $\mathcal{S}_D$ and $\mathcal{K}_D^*$ respectively denote the single-layer potential and the Neumann-Poincar\'e operator associated to the Laplacian. Our main results in this paper are the following. 


\begin{thm} \label{thm12d heat}
For an incident wave $u^i \in \mathcal{C}^2(\R^2)$, the solution $u$ to \eqref{eq-Helm heat}, inside a plasmonic particle occupying a domain $D = z+ \delta B$,  has the following asymptotic expansion as $\delta \rightarrow 0$ in $L^2(D)$, 
$$
u = u^i(z) + \left(\delta(x-z) + \mathcal{S}_D \big(\lambda_{\eps}I - \mathcal{K}_D^*\big)^{-1} [\nu]\right)\cdot \nabla u^{i}(z) + O\left(\f{\delta^3 }{\textnormal{dist}(\lambda_{\eps},\sigma(\mathcal{K}^*_{D}))}\right),
$$
where $\nu$ is the outward normal to $D$, $\sigma(\mathcal{K}^*_{D})$ denotes the spectrum of $\mathcal{K}^*_{D}$ in $H^{-\f{1}{2}}(\p D)$ and  
\beas
\lambda_{\eps}: = \f{\eps_c+ \eps_m}{2(\eps_c-\eps_m)}.
\eeas
\end{thm}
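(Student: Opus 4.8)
The plan is to reduce the scattering problem \eqref{eq-Helm heat} to an integral equation on $\p D$ via layer potentials, then perform an asymptotic expansion in $\delta$, extracting the leading plasmonic contribution governed by the static Neumann--Poincar\'e operator $\mathcal{K}_D^*$. First I would represent the solution as
\[
u(x) = \begin{cases} u^i(x) + \mathcal{S}_D^{k_m}[\psi](x), & x \in \R^2 \setminus \overline{D},\\[4pt] \mathcal{S}_D^{k_c}[\phi](x), & x \in D,\end{cases}
\]
where $k_m = \omega n_m /c$, $k_c = \omega n_c/c$ are the wavenumbers inside and outside, and $\phi,\psi \in H^{-1/2}(\p D)$ are densities to be determined. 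The transmission conditions in \eqref{eq-Helm heat} (continuity of $u$ and of $\frac1\eps \frac{\p u}{\p\nu}$) then force a $2\times 2$ system in $(\phi,\psi)$ built from $\mathcal{S}_D^{k_c}, \mathcal{S}_D^{k_m}$ and the corresponding $(\mathcal{K}_D^{k})^*$ together with the jump relations $\frac{\p}{\p\nu}\mathcal{S}_D^{k}[\varphi]\big|_\pm = (\pm\frac12 I + (\mathcal{K}_D^{k})^*)[\varphi]$.

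Next I would rescale to the reference body $B$: writing $x = z + \delta \tilde x$, the operators $\mathcal{S}_D^{k}$ and $(\mathcal{K}_D^{k})^*$ pull back to operators on $\p B$, and the small-volume asymptotics of the Helmholtz layer potentials in two dimensions (this is exactly what Appendix~\ref{append1 heat} supplies) give the expansions $\mathcal{S}_D^{k} \approx \mathcal{S}_B + (\text{log and power corrections in }\delta)$ and $(\mathcal{K}_D^{k})^* \approx \mathcal{K}_B^* + O(\delta^2)$ after rescaling. Substituting into the $2\times 2$ system and eliminating, the density that produces the inner field satisfies, to leading order, an equation of the form $(\lambda_\eps I - \mathcal{K}_D^*)[\varphi] = \frac{\p}{\p\nu}u^i + (\text{h.o.t.})$, where the constant $\lambda_\eps = \frac{\eps_c+\eps_m}{2(\eps_c-\eps_m)}$ arises from combining the $\pm\frac12 I$ jump terms with the permittivity ratio in the Neumann transmission condition. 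Inverting, $\varphi \approx (\lambda_\eps I - \mathcal{K}_D^*)^{-1}[\frac{\p}{\p\nu}u^i]$; since $u^i \in \mathcal{C}^2$, Taylor expanding $u^i(x) = u^i(z) + (x-z)\cdot\nabla u^i(z) + O(\delta^2)$ on $\p D$ replaces $\frac{\p}{\p\nu}u^i$ by $\nu\cdot\nabla u^i(z)$ to the relevant order, so $\varphi \approx (\lambda_\eps I - \mathcal{K}_D^*)^{-1}[\nu]\cdot\nabla u^i(z)$. Feeding this back through $\mathcal{S}_D$ and adding the trivial interior contribution $u^i(z) + \delta(x-z)\cdot\nabla u^i(z)$ (which is the inner field one would get ignoring the particle contrast, using $\mathcal{S}_B$ acting on the constant-in-the-expansion pieces) yields the stated formula.

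Finally I would track the error terms. The key points are: (i) the resolvent $(\lambda_\eps I - \mathcal{K}_D^*)^{-1}$ has norm comparable to $1/\textnormal{dist}(\lambda_\eps,\sigma(\mathcal{K}_D^*))$ on $H^{-1/2}(\p D)$, by self-adjointness of $\mathcal{K}_D^*$ in the appropriate (Plemelj) inner product and the spectral theorem, so every place the resolvent is applied contributes that factor; (ii) the next correction in the $\mathcal{C}^2$-Taylor expansion of $u^i$ is $O(\delta^2)$ pointwise on $\p D$, and after applying $\mathcal{S}_D$ and restoring the $\delta$ from $dσ$ and from $x-z = \delta\tilde x$, the remainder is $O(\delta^3)$; (iii) the Helmholtz-to-Laplace corrections in the layer potentials (the $O(\delta^2)$ and $O(\delta^2\log\delta)$ terms from Appendix~\ref{append1 heat}) are subdominant once the resolvent factor is accounted for. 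Combining gives the error $O\!\big(\delta^3/\textnormal{dist}(\lambda_\eps,\sigma(\mathcal{K}_D^*))\big)$ in $L^2(D)$.

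The main obstacle I anticipate is the bookkeeping in step two: the two-dimensional Helmholtz single-layer potential has a $\log$ singularity in $k$ (through $H_0^{(1)}$), so $\mathcal{S}_D^{k}$ is not simply $\mathcal{S}_D$ plus a small perturbation — one must isolate the $\frac{1}{2\pi}\log\delta$ behaviour and check it does not spoil the invertibility of the relevant operator (this is presumably handled by the invertibility analysis of $\mathcal{S}_D$ for the Laplacian mentioned in the appendix). Correctly threading these logarithmic terms through the $2\times 2$ elimination, while keeping uniform control in terms of $\textnormal{dist}(\lambda_\eps,\sigma(\mathcal{K}_D^*))$, is the delicate part; everything else is a fairly standard small-volume layer-potential expansion.
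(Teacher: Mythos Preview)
Your approach is the same as the paper's: layer-potential representation, rescale to $B$, expand the Helmholtz layer operators about the Laplace ones, reduce to a single equation for the interior density, invert, and push through $\mathcal{S}_D^{k_c}$. The error accounting you sketch (resolvent norm $\sim 1/\mathrm{dist}(\lambda_\eps,\sigma(\mathcal{K}_D^*))$, Taylor remainder $O(\delta^2)$, Helmholtz-to-Laplace corrections $O(\delta^2\log\delta)$) matches the paper's.

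There is one point where your narrative is slightly off and it is exactly the place you flag as the obstacle. You write that one ``adds the trivial interior contribution $u^i(z)+\delta(x-z)\cdot\nabla u^i(z)$'' and then superposes $\mathcal{S}_D(\lambda_\eps I-\mathcal{K}_D^*)^{-1}[\nu]\cdot\nabla u^i(z)$. But the interior representation is $u=\mathcal{S}_D^{k_c}[\phi]$ \emph{only}; there is no additive incident-field term inside $D$. The constant $u^i(z)$ and the linear piece $\delta(x-z)\cdot\nabla u^i(z)$ must therefore be produced by $\mathcal{S}_D^{k_c}$ acting on specific components of $\phi$. In two dimensions this is where the $\log$-in-$k$ behaviour of $H_0^{(1)}$ enters essentially: the paper splits $\mathcal{H}^*(\p B)=\mathcal{H}^*_0\oplus\mathrm{span}\,\varphi_0$, replaces $\mathcal{S}_B$ by the invertible $\widetilde{\mathcal{S}}_B$, and tracks the rank-one pieces (operators $\Upsilon_{\delta k}$, $\mathcal{U}_{\delta k}$ built from $\varphi_0$ and the constant $\tau_{\delta k}$). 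The constant $u^i(z)$ comes out of the $\varphi_0$-component $\dfrac{u^i(z)\varphi_0}{\delta(\mathcal{S}_B[\varphi_0]+\tau_{\delta k_c})}$ after applying $\mathcal{S}_B^{\delta k_c}$, and the linear term from $\mathcal{P}_{\mathcal{H}^*_0}\widetilde{\mathcal{S}}_B^{-1}[\tilde{x}\cdot\nabla u^i(z)]$. Your reduced equation $(\lambda_\eps I-\mathcal{K}_D^*)[\varphi]=\partial_\nu u^i$ is only the $\mathcal{H}^*_0$-part of the full equation; the $\varphi_0$-part is what reconstructs $u^i(z)$ and requires the log bookkeeping. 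So the ``main obstacle'' you anticipate is not peripheral but is precisely the mechanism that produces two of the three leading terms; once you carry it out via the $\widetilde{\mathcal{S}}_B$/$\varphi_0$ decomposition of Appendix~\ref{append1 heat}, the rest of your outline goes through.
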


\begin{thm} \label{thm-Heat small volume heat}
Let $u$ be the solution to \eqref{eq-Helm heat}. The solution $\tau$ to \eqref{eq-heat heat} on the boundary $\partial D$ of a plasmonic particle occupying the domain $D = z+ \delta B$ has the following asymptotic expansion as $\delta \rightarrow 0$, uniformly in $(x,t)\in \p D\times (0,T)$,
\beas
  \tau(x,t) = F_D(x,t,b_c) - \mathcal{V}_D^{b_c}(\lambda_{\gamma} I - \mathcal{K}_D^*)^{-1}[\df{\p F_D(\cdot,\cdot,b_c)}{\p \nu}](x,t) +  O\left(\f{\delta^4 \log\delta }{\textnormal{dist}(\lambda_{\eps},\sigma(\mathcal{K}^*_{D}))^2}\right),
\eeas
where $\nu$ is the outward normal to $D$ and  
\beas
\lambda_{ \gamma} &:=& \f{ \gamma_c+  \gamma_m}{2( \gamma_c- \gamma_m)},\\
b_c &:=& \f{ \rho_c C_c}{\gamma_c},\\
F_D(x,t, b_c) &:=& \f{\om}{2\pi \gamma_c} \Im (\eps_c)\int_0^t \int_{D} \f{e^{-\f{|x-y|^2}{4 b_c (t-t')}}}{4\pi b_c (t-t')}|u|^2(y) dy dt',\\
\mathcal{V}_D^{b_c}[f](x,t) &:=& \int_0^t \int_{\p D} \f{e^{-\f{|x-y|^2}{4 b_c (t-t')}}}{4\pi b_c (t-t')}f(y,t') dy dt'.
\eeas
\end{thm}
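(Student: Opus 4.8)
The plan is to reformulate the parabolic transmission problem \eqref{eq-heat heat} as a system of boundary integral equations by means of \emph{heat} layer potentials, to show that on the length scale $\delta$ this system is, at leading order, governed by the \emph{static} single-layer potential $\mathcal S_D$ and Neumann--Poincar\'e operator $\mathcal K_D^*$ together with the thermal contrast parameter $\lam_\gamma$, to read off the announced expansion from the solution of the reduced system, and finally to estimate the remainder using Theorem \ref{thm12d heat}. In more detail: for a diffusivity $b>0$, let $E_b(x,t)=\f{e^{-|x|^2/(4bt)}}{4\pi bt}$ ($t>0$) be the associated heat kernel in $\R^2$, and let $\mathcal V_D^{b}$, $F_D(\cdot,\cdot,b)$ be the heat single-layer and heat volume potentials of the statement; $\mathcal V_D^{b}[\varphi]$ is continuous across $\p D$ and its conormal derivative obeys a jump relation of the form $\p_\nu\mathcal V_D^{b}[\varphi]\big|_{\pm}=\big(\mp c_b I+\mathcal K_D^{b,*}\big)[\varphi]$, with $\mathcal K_D^{b,*}$ a Volterra-in-time operator on $\p D\times(0,T)$ and $c_b>0$ depending only on $b$. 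Since $\Im(\eps)$ is supported in $D$ and $\tau(\cdot,0)=0$, we look for $\tau$ of the form $F_D(\cdot,\cdot,b_c)+\mathcal V_D^{b_c}[\varphi]$ inside $D$ and $\mathcal V_D^{b_m}[\psi]$ outside ($b_m$ being the host diffusivity, $\varphi,\psi$ unknown densities on $\p D\times(0,T)$); then each term solves the relevant homogeneous heat equation away from $\p D$, $F_D$ reproduces the interior source, and all the potentials vanish at $t=0$, so that $\tau$ solves \eqref{eq-heat heat} if and only if the two transmission conditions hold. Imposing continuity of $\tau$ and of the thermal flux $\gamma\p_\nu\tau$ and using the jump relations turns the problem into a $2\times2$ system of Volterra boundary integral equations for $(\varphi,\psi)$, involving the heat single-layer operators $\mathcal S_D^{b,t}$ and the operators $\mathcal K_D^{b,*}$ on $\p D\times(0,T)$, with right-hand sides built from $F_D$ and $\p_\nu F_D$.

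The second step is a heat-to-static reduction. Rescaling $D=z+\delta B$ to the fixed domain $B$ and running the asymptotic analysis of the heat layer potentials as in Appendix \ref{append1 heat} — the argument there for the Helmholtz kernel applies to $E_b$, the short-time part of $E_b$ producing the logarithmic singularity of the Laplace kernel and the long-time part an $x$-independent contribution — one obtains, on densities uniformly smooth in $t\in(0,T)$, expansions $\mathcal S_D^{b,t}=\alpha_b\,\mathcal S_D+(\text{lower order})$ and $\mathcal K_D^{b,*}=\alpha_b\,\mathcal K_D^*+(\text{lower order})$ in the relevant $H^{\pm1/2}(\p D)$ operator norms, with explicit constants $\alpha_b$; here one also uses the invertibility of $\mathcal S_D$ on $\p D$ for small $\delta$ established in Appendix \ref{append1 heat}. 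Inserting these expansions into the $2\times2$ system and combining the continuity condition with the flux condition exactly as in the classical transmission problem for $\nabla\cdot\gamma\nabla(\cdot)$ — in which the conductivity ratio $\gamma_c/\gamma_m$ enters through the parameter $\tfrac12\f{\gamma_c/\gamma_m+1}{\gamma_c/\gamma_m-1}=\lam_\gamma$ — one finds that the leading behaviour of $\varphi$ is governed by the static equation $(\lam_\gamma I-\mathcal K_D^*)[\varphi_0]=-\,\p_\nu F_D(\cdot,\cdot,b_c)$ on $\p D$, so that $\varphi=-(\lam_\gamma I-\mathcal K_D^*)^{-1}[\p_\nu F_D]+(\text{correction})$. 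The operator $\lam_\gamma I-\mathcal K_D^*$ is boundedly invertible on $H^{-1/2}(\p D)$: since $\gamma_c,\gamma_m>0$ one has $|\lam_\gamma|>\tfrac12$, hence $\lam_\gamma\notin\sigma(\mathcal K_D^*)\subset(-\tfrac12,\tfrac12]$ and $\mathrm{dist}(\lam_\gamma,\sigma(\mathcal K_D^*))$ is bounded below by a constant depending only on $\gamma_c/\gamma_m$; in particular there is no thermal resonance, which is why this distance does not appear in the error term.

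Substituting $\varphi=\varphi_0+(\text{correction})$ back into $\tau|_{\p D}=F_D+\mathcal V_D^{b_c}[\varphi]$ gives $\tau=F_D-\mathcal V_D^{b_c}(\lam_\gamma I-\mathcal K_D^*)^{-1}[\p_\nu F_D]+(\text{error})$, which is the announced formula; the remaining task, and the main analytic point, is to bound the error. The error collects the lower-order remainders of the heat-to-static reduction and the contribution of the eliminated outer density, each smaller by a power of $\delta$ (up to a logarithm) than the leading terms, whose size is controlled by $F_D$ and $\p_\nu F_D$, hence by $|u|^2$. One bounds $|u|^2$ via Theorem \ref{thm12d heat}, which determines $u$ on $D$ modulo $O\!\big(\delta^3/\mathrm{dist}(\lam_\eps,\sigma(\mathcal K_D^*))\big)$ and shows that near a plasmonic resonance $\|u\|_{L^\infty(D)}$ is amplified by $\mathrm{dist}(\lam_\eps,\sigma(\mathcal K_D^*))^{-1}$; as the heat source is quadratic in $u$, this produces the factor $\mathrm{dist}(\lam_\eps,\sigma(\mathcal K_D^*))^{-2}$. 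Finally, two-dimensional heat volume and single-layer potentials over a domain of diameter $\delta$ supply the powers of $\delta$ and a single logarithmic factor, the time integrals over $(0,t)\subset(0,T)$ being harmless and yielding uniformity in $t$; collecting everything leads to the stated remainder $O\!\big(\delta^4\log\delta\,/\,\mathrm{dist}(\lam_\eps,\sigma(\mathcal K_D^*))^2\big)$, uniformly in $(x,t)\in\p D\times(0,T)$.

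The main obstacle is the combination of the reduction step and the error analysis: one has to propagate the asymptotic expansions of the heat layer potentials through the coupled $2\times2$ Volterra system while keeping precise track — uniformly in $t\in(0,T)$ and with the sharp power of $\log\delta$ proper to dimension two — of how the resonantly amplified field $u$ enters $F_D$ and $\p_\nu F_D$, so that the genuinely time-dependent and higher-order-in-$\delta$ parts of $\tau$ are controlled by exactly $O\!\big(\delta^4\log\delta\,/\,\mathrm{dist}(\lam_\eps,\sigma(\mathcal K_D^*))^2\big)$.
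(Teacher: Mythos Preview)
Your overall architecture --- represent $\tau$ by heat layer potentials, reduce the transmission system to a leading static problem governed by $\lambda_\gamma I-\mathcal K_D^*$, and control the remainder through the size of $|u|^2$ via Theorem~\ref{thm12d heat} --- matches the paper's. The route, however, is genuinely different: the paper does \emph{not} work with time-domain heat potentials. It takes the Laplace transform in $t$, which turns \eqref{eq-heat heat} into a Helmholtz-type transmission problem with wavenumbers $\beta_{\gamma_c}=i\sqrt{s\,b_c}$, $\beta_{\gamma_m}=i\sqrt{s\,b_m}$; the resulting $2\times 2$ system is then rescaled to $B$ and expanded using exactly the Helmholtz asymptotics of Appendix~\ref{append1 heat}. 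Only at the very end is the inverse Laplace transform applied, via the identity $L\big(K(x,\cdot,b_c)\big)=-G(x,\beta_{\gamma_c})$, to recover $F_D$ and $\mathcal V_D^{b_c}$. Thus in the paper the ``heat-to-static'' reduction is really a ``Helmholtz-to-static'' reduction, and Appendix~\ref{append1 heat} is invoked for the Helmholtz single layer $\mathcal S_B^{\delta\beta}$ and $(\mathcal K_B^{\delta\beta})^*$, not for heat potentials.

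This is where your proposal has a gap. You assert that ``the argument there for the Helmholtz kernel applies to $E_b$'', giving $\mathcal S_D^{b,t}=\alpha_b\,\mathcal S_D+\cdots$ and $\mathcal K_D^{b,*}=\alpha_b\,\mathcal K_D^*+\cdots$. Appendix~\ref{append1 heat} rests entirely on the explicit Hankel-function series $-\tfrac{i}{4}H_0^{(1)}(k|x-y|)=\tfrac{1}{2\pi}\log|x-y|+\tau_k+\cdots$ for small $k$; there is no analogous small-$\delta$ series for the time-domain kernel $E_b(x-y,t-t')$, and the Volterra-in-time structure of $\mathcal V_D^b$ and $\mathcal K_D^{b,*}$ does not fit the framework of that appendix. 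The heuristic ``short-time part gives the log, long-time part gives a constant'' is the right intuition, but turning it into operator expansions with the precise $O(\delta^2\log\delta)$ remainders you need --- uniformly in $t\in(0,T)$ --- is exactly the work that the Laplace transform does for free, by converting the problem into one where Appendix~\ref{append1 heat} is literally applicable. If you want to stay in the time domain you must supply that analysis separately; as written, the reduction step is not established. (A minor point: the jump constant for $\partial_\nu\mathcal V_D^b$ is $\tfrac12$, not a $b$-dependent $c_b$, once the kernel is normalised as in the statement.)
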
 

\begin{rmk} We remark that Theorem \ref{thm12d heat} and Theorem \ref{thm-Heat small volume heat} are independent. A generalization of Theorem  \ref{thm-Heat small volume heat} to $\R^3$ is straightforward and the same type of small volume approximation can be found using the techniques presented in this paper. In fact, in $\R^3$, the operators involved in the first term of the temperature small volume expansion are
\beas
F_D(x,t, b_c) &:=& \f{\om}{2\pi \gamma_c} \Im (\eps_c)\int_0^t \int_{D} \f{e^{-\f{|x-y|^2}{4 b_c (t-t')}}}{\big(4\pi b_c (t-t')\big)^{\f{3}{2}}}|E|^2(y) dy dt',\\
\mathcal{V}_D^{b_c}[f](x,t) &:=& \int_0^t \int_{\p D} \f{e^{-\f{|x-y|^2}{4 b_c (t-t')}}}{\big(4\pi b_c (t-t')\big)^{\f{3}{2}}}f(y,t') dy dt'.
\eeas
Here $E$ is the vectorial electric field as a result of Maxwell equations. A small volume expansion for $E$ inside the nanoparticle for the plasmonic case can be found using the same techniques as those of \cite{matias2}.
\end{rmk}

Throughout this paper, we denote by $\mathcal{L}(E,F)$ the set of bounded linear applications from $E$ to $F$ and let $\mathcal{L}(E) := \mathcal{L}(E,E)$ and let  $H^s(\p D)$ to be the standard Sobolev space of order $s$ on $\p D$.

\section{Preliminaries}
\subsection{Layer potentials for the Helmholtz equation in two dimensions} \label{sec-Helm layer heat}

Let us recall some properties of the single-layer potential and the Neumann-Poincar\'e integral operator \cite{book3}:
\begin{enumerate}
\item[(i)] $\mathcal{S}_{D}^{ k}: H^{-\f{1}{2}}(\p D)\rightarrow  H^{\f{1}{2}}(\p D), H^{1}_{loc}(\R^2\backslash\p D)$ is bounded;
\item[(ii)] $(\Delta +  k^2)\mathcal{S}_{D}^{ k}[\varphi](x) = 0$ for $x \in \R^2\backslash \p D$, $\varphi \in  H^{-\f{1}{2}}(\p D)$;
\item[(iii)] $(\mathcal{K}_{D}^{ k})^* : H^{-\f{1}{2}}(\p D)\rightarrow  H^{-\f{1}{2}}(\p D)$ is compact;
\item[(iv)] $\mathcal{S}_{D}^{ k}[\varphi]$, $\varphi\in H^{-\f{1}{2}}(\p D)$, satisfies the Sommerfeld radiation condition at infinity;
\item[(v)] $\df{\p \mathcal{S}_{D}^{ k}[\varphi]}{\p \nu}\Big\vert_{\pm} = (\pm\f{1}{2}I +(\mathcal{K}_{D}^{ k})^*)[\varphi]$.
\end{enumerate}

We have that, for any $\psi, \phi \in  H^{-\f{1}{2}}(\p D)$,
\be \label{Helm-solution heat}
u := \left\{
\begin{array}{cc}
u^i + \mathcal{S}_{D}^{ k_m} [\psi], & \quad x \in \R^2 \backslash \overline{D},\\
\nm
\mathcal{S}_{D}^{ k_c} [\phi] ,  & \quad x \in {D},
\end{array}\right.
\ee
with $ k_m = \om\sqrt{\eps_m\mu_m}$ and  $ k_c = \om\sqrt{\eps_c\mu_c}$, satisfies $\nabla \cdot \f{c^2}{n^2} \nabla  u+ \omega^2 u  = 0 \mbox{ in } \R^2\backslash \p D$ and $u-u^i$ satisfies the Sommerfeld radiation condition.

To satisfy the boundary transmission conditions, $\psi, \phi \in  H^{-\f{1}{2}}(\p D)$ need to satisfy the following system of integral equations on $\partial D$
\be \label{Helm-syst heat}
\left\{
\begin{array}{rcl}
\mathcal{S}_D^{ k_m} [\psi] - \mathcal{S}_D^{ k_c} [\phi] &=& -u^{i},  \\
\nm
 \f{1}{\eps_m}\big(\f{1}{2}I + (\mathcal{K}_D^{ k_m})^*\big)[\psi] +  \f{1}{\eps_c} \big(\f{1}{2}I- (\mathcal{K}_D^{ k_c})^*\big)[\phi] &=& -\dfrac{1}{\eps_m}\df{\p u^{i}}{\p \nu}.
\end{array} \right.
\ee
The following result  shows the existence of such a representation \cite{BoundaryLayerHelmholtz}.
\begin{thm}
The operator
\beas
\mathcal{T}: \left(H^{-\f{1}{2}}(\p D)\right)^2 &\rightarrow& H^{\f{1}{2}}(\p D)\times H^{-\f{1}{2}}(\p D) \\
(\psi,\phi) &\mapsto& \left(\mathcal{S}_D^{ k_m} [\psi] - \mathcal{S}_D^{ k_c} [\phi],\; \f{1}{\eps_m}\big(\f{1}{2}I + (\mathcal{K}_D^{ k_m})^*\big)[\psi] +  \f{1}{\eps_c} \big(\f{1}{2}I - (\mathcal{K}_D^{ k_c})^*\big)[\phi]\right)
\eeas
is invertible.
\end{thm}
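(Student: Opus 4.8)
\emph{Proof strategy.} The plan is to prove that $\mathcal{T}$ is a bounded isomorphism in the classical two-step way: first that $\mathcal{T}$ is a Fredholm operator of index zero, then that $\mathcal{T}$ is injective. A Fredholm operator of index zero which is injective is automatically surjective, and its inverse is then bounded by the open mapping theorem.

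For the Fredholm step I would strip off all the compact contributions and reduce to an algebraic statement. By property (iii) the operators $(\mathcal{K}_D^{k_m})^*$ and $(\mathcal{K}_D^{k_c})^*$ are compact on $H^{-\frac12}(\p D)$, and, from the small-argument expansion of $H_0^{(1)}$ worked out in Appendix \ref{append1 heat}, $\mathcal{S}_D^{k}-\mathcal{S}_D$ is compact from $H^{-\frac12}(\p D)$ into $H^{\frac12}(\p D)$ for fixed $k$ (it is a rank-one operator $\varphi\mapsto\frac{1}{2\pi}(\log k+\text{const})\int_{\p D}\varphi\,d\sigma$ plus a smoothing remainder). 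Hence $\mathcal{T}$ differs by a compact operator from
\[
\mathcal{T}_0:(\psi,\phi)\longmapsto\Bigl(\mathcal{S}_D^{k_c}[\psi-\phi],\ \tfrac{1}{2\eps_m}\psi+\tfrac{1}{2\eps_c}\phi\Bigr),
\]
and it is enough to treat $\mathcal{T}_0$. Factoring $\mathcal{T}_0=(\mathcal{S}_D^{k_c}\oplus I)\circ M$, where $M=\bigl(\begin{smallmatrix}I&-I\\ \frac{1}{2\eps_m}I&\frac{1}{2\eps_c}I\end{smallmatrix}\bigr)$ is a constant-coefficient matrix on $\bigl(H^{-\frac12}(\p D)\bigr)^2$, one sees $\mathcal{T}_0$ is invertible: $\det M=\frac{1}{2\eps_c}+\frac{1}{2\eps_m}=\frac{\eps_c+\eps_m}{2\eps_c\eps_m}\neq0$ because $\eps_m$ is real while $\Im\eps_c>0$, so $M$ is an isomorphism, and $\mathcal{S}_D^{k_c}:H^{-\frac12}(\p D)\to H^{\frac12}(\p D)$ is invertible because $\Im(k_c^2)=\omega^2\Im(\eps_c\mu_c)\neq0$ rules out any real resonance (it is Fredholm of index zero as an elliptic operator of order $-1$, and injective by uniqueness of the interior and exterior problems at a non-real wavenumber). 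Thus $\mathcal{T}_0$, and hence $\mathcal{T}$, is Fredholm of index zero.

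For injectivity, assume $\mathcal{T}(\psi,\phi)=0$ and set $u:=\mathcal{S}_D^{k_m}[\psi]$ in $\R^2\setminus\overline D$ and $u:=\mathcal{S}_D^{k_c}[\phi]$ in $D$. By (ii), (iv) and the jump formula (v), the vanishing of the two components of $\mathcal{T}(\psi,\phi)$ says exactly that $u$ is a radiating solution of the homogeneous transmission problem, i.e.\ of \eqref{eq-Helm heat} with $u^i\equiv0$. The heart of the matter is uniqueness for this problem, and here the material assumptions enter. Writing the equation in the divergence form $\nabla\cdot\frac{1}{\eps}\nabla u+\omega^2\mu\,u=0$ compatible with the transmission conditions, I would test against $\bar u$ over $B_R\setminus\p D$, cancel the interface terms using $u_+=u_-$ and the flux condition, and take imaginary parts, obtaining
\[
-\,\Im\!\Bigl(\tfrac{1}{\eps_c}\Bigr)\int_D|\nabla u|^2\,dx+\omega^2\,\Im(\mu_c)\int_D|u|^2\,dx+\tfrac{1}{\eps_m}\,\Im\!\int_{\p B_R}\frac{\p u}{\p\nu}\,\bar u\,d\sigma=0.
\]
Since $\Re\eps_c'<0,\ \Im\eps_c'>0$ force $\Im(1/\eps_c)<0$, since $\Im\mu_c'>0$ gives $\Im(\mu_c)>0$, and since the Sommerfeld condition makes the last term tend to $k_m\|u^\infty\|_{L^2(S^1)}^2\geq0$ as $R\to\infty$, all three terms are nonnegative and hence vanish; therefore $u\equiv0$ in $D$ and $u^\infty\equiv0$, and Rellich's lemma together with unique continuation give $u\equiv0$ in $\R^2$. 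It remains to recover $\psi$ and $\phi$: $\mathcal{S}_D^{k_c}[\phi]$ vanishes in $D$, and, being a radiating solution with zero Dirichlet trace and $\Im(k_c^2)\neq0$, also in $\R^2\setminus\overline D$, so by (v) $\phi=\frac{\p}{\p\nu}\mathcal{S}_D^{k_c}[\phi]\big|_+-\frac{\p}{\p\nu}\mathcal{S}_D^{k_c}[\phi]\big|_-=0$; likewise $\mathcal{S}_D^{k_m}[\psi]\equiv0$ and $\psi=0$, using here that $k_m^2$ is not a Dirichlet eigenvalue of $D$, which in the regime $D=z+\delta B$ with $\delta$ small is automatic after rescaling to $B$.

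I expect the Fredholm bookkeeping to be routine and the real obstacle to be the uniqueness argument: making the interface terms disappear requires using the transmission conditions in exactly the right divergence form, and concluding requires keeping the signs in the energy identity under control, which is precisely where the hypotheses $\Re\eps_c',\Re\mu_c'<0$ and $\Im\eps_c',\Im\mu_c'>0$ are used. The two-dimensional features — the logarithmic-capacity obstruction to invertibility of $\mathcal{S}_D$ and the Dirichlet resonances of $\mathcal{S}_D^{k_m}$ — are the other technical points to be careful about, and are the reason the invertibility of the Laplace single-layer potential is revisited in the appendix.
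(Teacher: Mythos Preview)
The paper does not supply its own proof of this theorem: it simply states the result and attributes it to \cite{BoundaryLayerHelmholtz}. Your proposal is precisely the standard argument used in that reference and in the surrounding literature --- compact perturbation of an invertible block operator to obtain Fredholmness of index zero, followed by injectivity via uniqueness of the homogeneous transmission problem and the jump relations to recover the densities. So in substance there is nothing to compare against; you have reconstructed the expected proof.

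Two small comments on execution. First, the claim ``$\Re\eps_c'<0,\ \Im\eps_c'>0$ force $\Im(1/\eps_c)<0$'' overstates what is used: $\Im(1/\eps_c)=-\Im\eps_c/|\eps_c|^2<0$ follows from $\Im\eps_c'>0$ alone, and the sign of $\Re\eps_c'$ plays no role at that step (nor, in fact, anywhere in the energy identity --- the hypothesis $\Re\eps_c'<0$ is a modelling assumption for plasmonics, not a technical ingredient of invertibility). Second, your handling of the last step is exactly right in flagging the only genuine obstruction: once $u\equiv0$, one gets $\phi=0$ unconditionally because $\Im k_c^2\neq0$ rules out any interior or exterior Dirichlet resonance for $\mathcal{S}_D^{k_c}$, but concluding $\psi=0$ from $\mathcal{S}_D^{k_m}[\psi]\equiv0$ in $\R^2\setminus\overline D$ does require that $k_m^2$ not be a Dirichlet eigenvalue of $-\Delta$ in $D$. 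In the regime $D=z+\delta B$ this is automatic for $\delta$ small, as you note; the cited reference imposes the same smallness hypothesis, so your caveat matches the actual scope of the result.
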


\section{Heat generation} \label{sec-heat generation heat}
In this section we consider the coupling of equations \eqref{eq-Helm heat} and \eqref{eq-heat heat}, that is, 
\bea \label{eq-helmholtz & heat heat}
 \left\{
\begin{array} {ll}
&\ds \nabla \cdot \f{c^2}{n^2} \nabla  u+ \omega^2 u  = 0 \quad \mbox{in } \R^2\backslash \p D, \\
\nm
& u_{+} -u_{-} = 0   \quad \mbox{on } \partial D, \\
\nm
&  \ds \f{1}{\eps_{m}} \f{\p u}{\p \nu} \bigg|_{+} - \f{1}{\eps_{c}} \f{\p u}{\p \nu} \bigg|_{-} =0 \quad \mbox{on } \partial D,\\
\nm
&  u^s:= u- u^{i}  \,\,\,  \mbox{satisfies the Sommerfeld radiation condition at infinity},\\
\nm
&\ds \f{\rho_c C_c}{\gamma_c}\df{\partial \tau }{\partial t} - \Delta \tau = \f{\om}{2\pi \gamma_c}\Im (\eps_c)| u|^2  \quad \mbox{in }  D \times(0,T),\\
\nm
&\ds \f{\rho_m C_m}{\gamma_m} \df{\partial \tau }{\partial t} - \Delta \tau = 0 \quad \mbox{in } ( \R^2\backslash \overline{D} )\times(0,T),\\
\nm
&\tau_{+} -\tau_{-}  = 0   \quad \mbox{on } \partial D, \\
\nm
&  \ds  \gamma_m \f{\p \tau}{\p \nu} \bigg|_{+} -  \gamma_c \f{\p \tau}{\p \nu} \bigg|_{-} =0 \quad \mbox{on } \partial D,\\
\nm
&\tau(x,0) = 0.
\end{array}
 \right.
\eea


Under the assumption that the index of refraction $n$ does not depend on the temperature, we can solve equation \eqref{eq-Helm heat} separately from equation \eqref{eq-heat heat}. 

Our goal is to establish  a small volume expansion for the resulting temperature at the surface of the nanoparticule as a function of time. To do so, we first need to compute the electric field inside the nanoparticule as a result of a plasmonic resonance. We make use of layer potentials for the Helmholtz equation, described in subsection \ref{sec-Helm layer heat}.

\subsection{Small volume expansion of the inner field} \label{sec-field expansions heat}
We proceed in this section to prove Theorem \ref{thm12d heat}.

\subsubsection{Rescaling}
Since we are working with nanoparticles, we want to rescale equation \eqref{Helm-syst heat} to study the solution for a small volume approximation by using representation \eqref{Helm-solution heat}.

Recall that $D = z + \delta B$. For any $x\in \p D$, $\widetilde{x} := \f{x-z}{\delta}\in \p B$ and for each function $f$ defined on $\p D$, we introduce a corresponding function defined on $\p B$ as follows
\be \label{defeta heat} \begin{array}{rcl}
\eta(f)(\widetilde{x}) = f(z + \delta \widetilde{x}).
\end{array}
\ee

It follows that
\be \begin{array}{rcl} \label{eq-small volume single layer heat}
\mathcal{S}_D^{ k} [\varphi](x) &=& \delta\mathcal{S}_B^{\delta k} [\eta(\varphi)](\widetilde{x}),\\
\nm
(\mathcal{K}_D^{ k})^*[\varphi](x) &=& (\mathcal{K}_B^{\delta k})^*[\eta(\varphi)](\widetilde{x}),
\end{array}
\ee
so system \eqref{Helm-syst heat} becomes
\be \label{Helm-syst re-scaled heat}
\left\{
\begin{array}{rcl}
\mathcal{S}_B^{\delta k_m} [\eta(\psi)] - \mathcal{S}_B^{\delta k_c} [\eta(\phi)] &=& -\df{\eta(u^{i})}{\delta},  \\
\nm
 \f{1}{\eps_m}\big(\f{1}{2}I + (\mathcal{K}_B^{\delta k_m})^*\big)[\eta(\psi)] +  \f{1}{\eps_c} \big(\f{1}{2}I - (\mathcal{K}_B^{\delta k_c})^*\big)[\eta(\phi)] &=& -\dfrac{1}{\eps_m}\eta(\df{\p u^{i}}{\p \nu}).
\end{array} \right.
\ee

Note that the system is defined on $\p B$.

 
For $\delta$ small enough $\mathcal{S}_B^{\delta k_m}$ is invertible (see Appendix \ref{append1 heat}). Therefore, 
\beas
\eta(\psi) = (\mathcal{S}_B^{\delta k_m})^{-1}\mathcal{S}_B^{\delta k_c}[\eta(\phi)] -  (\mathcal{S}_B^{\delta k_m})^{-1}[\df{\eta(u^{i})}{\delta}].
\eeas
Hence, we have the following equation for $\eta(\phi)$:
\beas
\mathcal{A}^I_{B}(\delta)[\eta(\phi)] = f^I,
\eeas
where
\be
\begin{array}{rcl} \label{eq-A f^I heat}
 \mathcal{A}^I_B(\delta) &=& \f{1}{\eps_m}\big(\f{1}{2}I + (\mathcal{K}_B^{\delta k_m})^*\big)(\mathcal{S}_B^{\delta k_m})^{-1}\mathcal{S}_B^{\delta k_c} +  \f{1}{\eps_c} \big(\f{1}{2}I - (\mathcal{K}_B^{\delta k_c})^*\big) , \\
 \nm
 f^I &=& -\dfrac{1}{\eps_m}\eta(\df{\p u^{i}}{\p \nu})+\f{1}{\eps_m}\big(\f{1}{2}I + (\mathcal{K}_B^{\delta k_m})^*\big)(\mathcal{S}_B^{\delta k_m})^{-1}[\df{\eta(u^{i})}{\delta}].
 \end{array}
\ee

\subsubsection{Proof of Theorem \ref{thm12d heat}}
To express the solution to \eqref{eq-Helm heat} in $D$, asymptotically on the size of the nanoparticle $\delta$, we make use of the representation \eqref{Helm-solution heat}. We derive an asymptotic expansion for $\eta(\phi)$ on $\delta$ to later compute $\delta\mathcal{S}_B^{\delta k_c}[\eta(\phi)]$ and scale back to $D$. We divide the proof into three steps.  

\medskip
\textbf{Step 1.} We first compute an asymptotic for $\mathcal{A}^I_B(\delta)$ and $f^I$.
\medskip

Let $\mathcal{H}^*(\p B)$ be defined by (\ref{addeq5}) with $D$ replaced by $B$. In $\mathcal{L}(\mathcal{H}^*(\p B))$, we have the following asymptotic expansion as $\delta \rightarrow 0$ (see Appendix \ref{append1 heat})
\beas
(\mathcal{S}_B^{\delta k_m})^{-1} \mathcal{S}_B^{\delta k_c} &=& \mathcal{P}_{\mathcal{H}^*_0} + \mathcal{U}_{\delta k_m}(\widetilde{\mathcal{S}}_{B} + \Upsilon_{\delta k_c}) +  O(\delta^2\log \delta), \\
\f{1}{2}I  \pm (\mathcal{K}_B^{\delta k})^* &=& \big( \f{1}{2}I \pm \mathcal{K}_B^* \big) + O(\delta^2\log \delta).
\eeas

Let $\varphi_0$ be an eigenfunction of $\mathcal{K}_B^*$  associated to the eigenvalue $1/2$ (see Appendix \ref{append1 heat}) and let $\mathcal{U}_{\delta k_m}$ be defined by (\ref{addeq1}) with $k$ replaced with $\delta k_m$.  Then it follows that
\beas
\big(\f{1}{2}I + \mathcal{K}_B^*\big)\mathcal{U}_{\delta k_m} = \mathcal{U}_{\delta k_m}.
\eeas
Therefore, in $\mathcal{L}(\mathcal{H}^*(\p B))$, 
\beas
\mathcal{A}^I_{B}(\delta) = \left(\big( \f{1}{2\eps_m} +  \f{1}{2\eps_c}\big)I + \big( \f{1}{\eps_m} -  \f{1}{\eps_c} \big) \mathcal{K}_B^*\right)\mathcal{P}_{\mathcal{H}^*_0} + \f{1}{\eps_m}\mathcal{U}_{\delta k_m}(\widetilde{\mathcal{S}}_{B} + \Upsilon_{\delta k_c}) + O(\delta^2 \log \delta),
\eeas
and from the definition of $\mathcal{U}_{\delta k_m}$ we get
\be \label{eq-A asymptotic heat}
\mathcal{A}^I_{B}(\delta) = \left(\big( \f{1}{2\eps_m} +  \f{1}{2\eps_c}\big)I + \big( \f{1}{\eps_m} -  \f{1}{\eps_c} \big) \mathcal{K}_B^*\right)\mathcal{P}_{\mathcal{H}^*_0} + \f{1}{\eps_m}\f{\mathcal{S}_B[\varphi_0] + \tau_{\delta k_c}}{\mathcal{S}_B[\varphi_0] + \tau_{\delta k_m}}(\cdot,\varphi_0)_{\mathcal{H}^*}\varphi_0 + O(\delta^2 \log \delta).
\ee

In the same manner, in the space $\mathcal{H}^*(\partial B)$,
\beas
 f^I = \dfrac{1}{\eps_m}\left(-\eta(\df{\p u^{i}}{\p \nu})+\big(\f{1}{2}I + \mathcal{K}_B^*\big)\mathcal{P}_{\mathcal{H}^*_0}\widetilde{\mathcal{S}}_{B}^{-1}[\df{\eta(u^{i})}{\delta}] + \mathcal{U}_{\delta k_m}[\df{\eta(u^{i})}{\delta}] + O(\delta^2\log \delta)\right).
\eeas
We can further develop $f^I$. Indeed, for every $\tilde{x} \in \p B$, a Taylor expansion yields
\beas
\eta(\df{\p u^{i}}{\p \nu})(\tilde{x}) &=& \nu(\tilde{x}) \cdot \nabla u^{i}(\delta\tilde{x}+z) \;=\;  \nu(\tilde{x}) \cdot \nabla u^{i}(z) + O(\delta),\\
\f{\eta(u^{i})}{\delta}(\tilde{x}) &=& \f{u^{i}(\delta\tilde{x}+z)}{\delta} \;=\; \f{u^{i}(z)}{\delta} +  \tilde{x}\cdot\nabla u^{i}(z) + O(\delta).
\eeas
The regularity of $u^i$ ensures that the previous formulas hold in $\mathcal{H}^*(\p B)$.

The fact that $\tilde{x}\cdot\nabla u^i(z)$ is harmonic in $B$ and Lemma \ref{lem-d/dn(u) 1/2-K^*u heat} imply that $$-\nu\cdot\nabla u^{i}(z) = (\dfrac{1}{2}I - \mathcal{K}_{B}^*)\mathcal{P}_{\mathcal{H}^*_0} \widetilde{\mathcal{S}}_{B}^{-1}[\tilde{x}\cdot\nabla u^{i}(z)]$$ in $\mathcal{H}^*(\p B)$. 

Thus, in $\mathcal{H}^*(\p B)$,
\beas
f^I = \dfrac{1}{\eps_m}\left(\mathcal{P}_{\mathcal{H}^*_0}\widetilde{\mathcal{S}}_{B}^{-1}[\tilde{x}\cdot\nabla u^{i}(z)] + \mathcal{U}_{\delta k_m}[\f{u^i(z)}{\delta}+\tilde{x}\nabla u^i(z)] + O(\delta)\right).
\eeas
From the definition of $\mathcal{U}_{\delta k_m}$ we get
\be \label{eq-f^I asymptotic heat}
f^I = \dfrac{1}{\eps_m}\left(\mathcal{P}_{\mathcal{H}^*_0}\widetilde{\mathcal{S}}_{B}^{-1}[\tilde{x}\cdot\nabla u^{i}(z)] + \f{u^i(z)\varphi_0}{\delta(\mathcal{S}_B[\varphi_0] + \tau_{\delta  k_m})} - \f{(\widetilde{\mathcal{S}}_{B}^{-1}[\tilde{x}\cdot\nabla u^{i}(z)],\varphi_0)_{\mathcal{H}^*}\varphi_0}{\mathcal{S}_B[\varphi_0] + \tau_{\delta  k_m}}  + O(\delta)\right).
\ee

\medskip
\textbf{Step 2.} We compute $(\mathcal{A}^I_B(\delta))^{-1}f^I$. 
\medskip

We begin by computing an asymptotic expansion of $(\mathcal{A}^I_B(\delta))^{-1}$.

The operator $\mathcal{A}^I_0:=\left(\big( \f{1}{2\eps_m} +  \f{1}{2\eps_c}\big)I + \big( \f{1}{\eps_m} -  \f{1}{\eps_c} \big) \mathcal{K}_B^*\right)$ maps $\mathcal{H}^*_0$ into $\mathcal{H}^*_0$. Hence, the operator defined by (which appears in the expansion of $\mathcal{A}^I_B(\delta)$) 
\beas
\mathcal{A}^I_{B,0}:=\mathcal{A}^I_0\mathcal{P}_{\mathcal{H}^*_0} + \f{1}{\eps_m}\f{\mathcal{S}_B[\varphi_0] + \tau_{\delta k_c}}{\mathcal{S}_B[\varphi_0] + \tau_{\delta k_m}}(\cdot,\varphi_0)_{\mathcal{H}^*}\varphi_0,
\eeas
is invertible of inverse
\beas
(\mathcal{A}^I_{B,0})^{-1}=(\mathcal{A}^I_0)^{-1}\mathcal{P}_{\mathcal{H}^*_0} + \eps_m\f{\mathcal{S}_B[\varphi_0] + \tau_{\delta k_m}}{\mathcal{S}_B[\varphi_0] + \tau_{\delta k_c}}(\cdot,\varphi_0)_{\mathcal{H}^*}\varphi_0.
\eeas
Therefore, we can write
\beas
(\mathcal{A}^I_{B})^{-1}(\delta) = \big(I + (\mathcal{A}^I_{B, 0})^{-1}O(\delta^2 \log \delta)\big)^{-1}(\mathcal{A}^I_{B, 0})^{-1}.
\eeas
Since $\mathcal{K}_B^*$ is a compact self-adjoint operator in $\mathcal{H}^*(\p B)$ it follows that \cite{pierre,matias}
\be \label{eq-operator estimator heat}
\|(\mathcal{A}^I_{0})^{-1}\|_{\mathcal{L}(\mathcal{H}^*(\p B))} \leq \f{c}{\mathrm{dist}(0,\sigma(\mathcal{A}^I_{0}))} ,
\ee
for a constant $c$. Therefore, for $\delta$ small enough, we obtain
\beas
(\mathcal{A}^I_B(\delta))^{-1}f^I &=& \big(I + (\mathcal{A}^I_{B, 0})^{-1}O(\delta^2 \log \delta)\big)^{-1}(\mathcal{A}^I_{B, 0})^{-1}f^I\\
&=& \big(I + (\mathcal{A}^I_{B, 0})^{-1}O(\delta^2 \log \delta)\big)^{-1}\left(\f{u^i(z)\varphi_0}{\delta(\mathcal{S}_B[\varphi_0] + \tau_{\delta  k_c})} - \f{(\widetilde{\mathcal{S}}_{B}^{-1}[\tilde{x}\cdot\nabla u^{i}(z)],\varphi_0)_{\mathcal{H}^*}\varphi_0}{\mathcal{S}_B[\varphi_0] + \tau_{\delta  k_c}} + \right. \\
&& \quad \; (\mathcal{A}^I_0)^{-1}\f{1}{\eps_m}\mathcal{P}_{\mathcal{H}^*_0}\widetilde{\mathcal{S}}_{B}^{-1}[\tilde{x}\cdot\nabla u^{i}(z)]+ O\left(\f{\delta}{	\textnormal{dist}(0,\sigma({\mathcal{A}^I}_{0}))}\right)\Bigg)\\
&=& \f{u^i(z)\varphi_0}{\delta(\mathcal{S}_B[\varphi_0] + \tau_{\delta  k_c})} - \f{(\widetilde{\mathcal{S}}_{B}^{-1}[\tilde{x}\cdot\nabla u^{i}(z)],\varphi_0)_{\mathcal{H}^*}\varphi_0}{\mathcal{S}_B[\varphi_0] + \tau_{\delta  k_c}} + (\mathcal{A}^I_0)^{-1}\f{1}{\eps_m}\mathcal{P}_{\mathcal{H}^*_0}\widetilde{\mathcal{S}}_{B}^{-1}[\tilde{x}\cdot\nabla u^{i}(z)]+\\ &&\quad \; O\left(\f{\delta}{	\textnormal{dist}(0,\sigma({\mathcal{A}^I}_{0}))}\right).
\eeas
Using the representation formula of $\mathcal{K}^*_B$ described in Lemma \ref{lem-K_star_properties2d heat} we can further develop the third term in the above expression to obtain
\beas
(\mathcal{A}^I_0)^{-1}\mathcal{P}_{\mathcal{H}^*_0}\widetilde{\mathcal{S}}_{B}^{-1}[\tilde{x}\cdot\nabla u^{i}(z)] &=& \sum_{j=1}^{\infty}\f{(\widetilde{\mathcal{S}}_{B}^{-1}[\tilde{x}\cdot\nabla u^{i}(z)],\varphi_j)_{\mathcal{H}^*}\varphi_j}{\big( \f{1}{2} +  \f{\eps_m}{2\eps_c}\big) - \big( \f{\eps_m}{\eps_c} -  1 \big)\lambda_j}\\
&=&
 \sum_{j=1}^{\infty}\left(\f{(\widetilde{\mathcal{S}}_{B}^{-1}[\tilde{x}\cdot\nabla u^{i}(z)],\varphi_j)_{\mathcal{H}^*}\varphi_j}{\big( \f{1}{2} +  \f{\eps_m}{2\eps_c}\big) - \big( \f{\eps_m}{\eps_c} -  1 \big)\lambda_j} - (\widetilde{\mathcal{S}}_{B}^{-1}[\tilde{x}\cdot\nabla u^{i}(z)],\varphi_j)_{\mathcal{H}^*}\varphi_j \right) \\
 && \quad \; + \mathcal{P}_{\mathcal{H}^*_0}\widetilde{\mathcal{S}}_{B}^{-1}[\tilde{x}\cdot\nabla u^{i}(z)] \\
&=& \mathcal{P}_{\mathcal{H}^*_0}\widetilde{\mathcal{S}}_{B}^{-1}[\tilde{x}\cdot\nabla u^{i}(z)] + \sum_{j=1}^{\infty}(\lambda_j-\f{1}{2})\f{(\widetilde{\mathcal{S}}_{B}^{-1}[\tilde{x}\cdot\nabla u^{i}(z)],\varphi_j)_{\mathcal{H}^*}\varphi_j}{\lambda - \lambda_j}.
\eeas
Using the same arguments as those  in the proof of Lemma \ref{lem-d/dn(u) 1/2-K^*u heat}, we have
\beas
(\lambda_j-\f{1}{2})(\widetilde{\mathcal{S}}_{B}^{-1}[\tilde{x}\cdot\nabla u^{i}(z)],\varphi_j)_{\mathcal{H}^*} =  \f{(\nu\cdot\nabla u^{i}(z),\varphi_j)_{\mathcal{H}^*}}{\lambda_j-\f{1}{2}},
\eeas
and consequently, 
\beas
(\mathcal{A}^I_0)^{-1}\f{1}{\eps_m}\mathcal{P}_{\mathcal{H}^*_0}\widetilde{\mathcal{S}}_{B}^{-1}[\tilde{x}\cdot\nabla u^{i}(z)] = \mathcal{P}_{\mathcal{H}^*_0}\widetilde{\mathcal{S}}_{B}^{-1}[\tilde{x}\cdot\nabla u^{i}(z)] + (\lambda_{\eps} I - \mathcal{K}_B^*)^{-1}[\nu]\cdot \nabla u^{i}(z).
\eeas
Therefore, 
\beas
(\mathcal{A}^I_B(\delta))^{-1}f^I = &\df{u^i(z)\varphi_0}{\delta(\mathcal{S}_B[\varphi_0] + \tau_{\delta  k_c})} - \df{(\widetilde{\mathcal{S}}_{B}^{-1}[\tilde{x}\cdot\nabla u^{i}(z)],\varphi_0)_{\mathcal{H}^*}\varphi_0}{\mathcal{S}_B[\varphi_0] + \tau_{\delta  k_c}} + \mathcal{P}_{\mathcal{H}^*_0}\widetilde{\mathcal{S}}_{B}^{-1}[\tilde{x}\cdot\nabla u^{i}(z)] + \\ &\quad \; (\lambda_{\eps} I - \mathcal{K}_B^*)^{-1}[\nu]\cdot \nabla u^{i}(z) + O\left(\df{\delta}{	\textnormal{dist}(0,\sigma({\mathcal{A}^I}_{0}))}\right).
\eeas

\medskip
\textbf{Step 3.} Finally, we compute $\eta(u) = \delta \mathcal{S}_B^{\delta  k_c}(\mathcal{A}^I_B(\delta))^{-1}f^I$.
\medskip

From Appendix \ref{append1 heat}, the following holds when $\mathcal{S}_B^{\delta  k_c}$ is viewed as an operator from the space $\mathcal{H}^*(\p B)$ to $\mathcal{H}(\p B)$:
\beas
\mathcal{S}_B^{\delta  k_c} = \widetilde{\mathcal{S}}_{B} + \Upsilon_{\delta k_c} + O(\delta^2 \log \delta).
\eeas
In particular, we have
\beas
\mathcal{S}_B^{\delta  k_c}[\varphi_0] = \mathcal{S}_B[\varphi_0] + \tau_{\delta k_c} + O(\delta^2 \log \delta).
\eeas
It can be verified that the same expansion holds when viewed as an operator from $\mathcal{H}^*(\p B)$ into $L^2(B)$.

Note that the following identity holds
\beas
- \df{(\widetilde{\mathcal{S}}_{B}^{-1}[\tilde{x}\cdot\nabla u^{i}(z)],\varphi_0)_{\mathcal{H}^*}\varphi_0}{\mathcal{S}_B[\varphi_0] + \tau_{\delta  k_c}} + \mathcal{P}_{\mathcal{H}^*_0}\widetilde{\mathcal{S}}_{B}^{-1}[\tilde{x}\cdot\nabla u^{i}(z)] =  - 
\f{\Upsilon_{\delta k_c}\big[\widetilde{\mathcal{S}}_{B}^{-1}[\tilde{x}\cdot\nabla u^{i}(z)]\big]\varphi_0}{\mathcal{S}_B[\varphi_0] + \tau_{\delta  k_c}} + \widetilde{\mathcal{S}}_{B}^{-1}[\tilde{x}\cdot\nabla u^{i}(z)].
\eeas
Straightforward calculations and the fact that $\mathcal{S}_B$ is harmonic in $B$ yields
\beas
\delta \mathcal{S}_B^{\delta  k_c}(\mathcal{A}^I_B(\delta))^{-1}f^I = u^i(z) + \delta\big(\tilde{x} + \mathcal{S}_B \big(\lambda_{\eps}I - \mathcal{K}_B^*\big)^{-1} [\nu]\big)\cdot \nabla u^{i}(z) + O\left(\f{\delta^2 }{\textnormal{dist}(\lambda_{\eps},\sigma(\mathcal{K}^*_{B}))}\right)
\eeas
in $L^2(B)$.
Using Lemma \ref{lem-change norm scaling heat} to scale back the estimate to $D$ leads to the desired result.

\subsection{Small volume expansion of the temperature}
We proceed in this section to prove Theorem \ref{thm-Heat small volume heat}. To do so, we make use of the Laplace transform method \cite{CostabelHeat,HohageSayas,LubichSchneider}.

Consider equation \eqref{eq-helmholtz & heat heat} and define the Laplace transform of a function $g(t)$ by
\beas
L(g)(s) = \int_0^{\infty}e^{-st}g(t)dt.
\eeas
 Taking the Laplace transform of the equations on $\tau$ in (\ref{eq-helmholtz & heat heat}) we formally obtain the following system:
\be \label{eq-heat Laplace transformed}
 \left\{
\begin{array} {ll}
&\ds s \f{\rho_c C_c}{\gamma_c} \hat{\tau}(\cdot,s) - \Delta \hat{\tau}(\cdot,s) = L(g_u)(\cdot,s) \quad \mbox{in }  D , \\
\nm
&\ds s \f{\rho_m C_m}{\gamma_m}\hat{\tau}(\cdot,s) - \Delta \hat{\tau}(\cdot,s) = 0 \quad \mbox{in }  \R^2\backslash \overline{D},  \\
\nm
&\hat{\tau}_{+}(\cdot,s) - \hat{\tau}_{-}(\cdot,s)  = 0   \quad \mbox{on } \partial D, \\
\nm
&  \ds  \gamma_m \f{\p \hat{\tau}}{\p \nu} \bigg|_{+} -  \gamma_c \f{\p \hat{\tau}}{\p \nu} \bigg|_{-} =0 \quad \mbox{on } \partial D,\\
\nm
&\hat{\tau}(\cdot,s) \mbox{ satisfies the Sommerfeld radiation condition at infinity},
\end{array}
 \right.
\ee
where $\hat{\tau}(\cdot,s)$ and $L(g_u)(\cdot,s)$ are the Laplace transforms of $\tau$ and $g_u := \f{\om}{2\pi \gamma_c}\Im (\eps_c)| u|^2$, respectively, and $s\in \mathbb{C} \backslash (-\infty, 0]$.

A rigorous justification for the derivation of system \eqref{eq-heat Laplace transformed} and the validity of the inverse transform of the solution can be found in \cite{HohageSayas}.

Using layer potential techniques we have that, for any $\hat{p}, \hat{q} \in  H^{-\f{1}{2}}(\p D)$,
$\hat{\tau}$ defined by 
\be \label{Heat-solution heat}
\hat{\tau} := \left\{
\begin{array}{cc}
-\mathcal{S}_D^{\beta_{\gamma_m}} [\hat{p}], & \quad x \in \R^2 \backslash \overline{D},\\
-\hat{F}_D(\cdot,y,\beta_{\gamma_c}) - \mathcal{S}_D^{\beta_{\gamma_c}} [\hat{q}] ,  & \quad x \in {D},
\end{array}\right.
\ee
satisfies the differential equations in \eqref{eq-heat Laplace transformed} together with the Sommerfeld radiation condition. Here $ \beta_{\gamma_m} := i\sqrt{s \f{ \rho_m C_m}{\gamma_m}}$, $ \beta_{\gamma_c} := i\sqrt{s \f{ \rho_c C_c}{\gamma_c}}$ and 
\beas
\hat{F}_D(\cdot,\beta_{\gamma_c}) := \int_{D}G(\cdot,y,\beta_{\gamma_c})L(g_u)(y)dy.
\eeas

To satisfy the boundary transmission conditions, $\hat{p}$ and $\hat{q} \in  H^{-\f{1}{2}}(\p D)$ should satisfy the following system of integral equations on $\partial D$:
\be \label{eq-heat layer system heat}
\left\{
\begin{array}{rcl}
 -\mathcal{S}_D^{\beta_{\gamma_m}} [\hat{p}] + \mathcal{S}_D^{\beta_{\gamma_c}} [\hat{q}] &=& -\hat{F}_D(\cdot,\beta_{\gamma_c}),  \\
\nm
  - \gamma_m \big(\f{1}{2}I + (\mathcal{K}_D^{\beta_{\gamma_m}})^*\big)[\hat{p}] +  \gamma_c\big(-\f{1}{2}I + (\mathcal{K}_D^{\beta_{\gamma_c}})^*\big)[\hat{q}] &=&  -\gamma_c \df{\p \hat{F}_D(\cdot,\beta_{\gamma_c})}{\p \nu}.
\end{array} \right.
\ee

\subsubsection{Rescaling of the equations}

Recall that $D = z + \delta B$, for any $x\in \p D$, $\widetilde{x} := \f{x-z}{\delta}\in \p B$, for each function $f$ defined on $\p D$, $\eta$ is such that $\eta(f)(\widetilde{x}) = f(z + \delta \widetilde{x})$ and
\beas \begin{array}{rcl} 
\mathcal{S}_D^{ k} [\varphi](x) &=& \delta\mathcal{S}_B^{\delta k} [\eta(\varphi)](\widetilde{x}),\\
(\mathcal{K}_D^{ k})^*[\varphi](x) &=& (\mathcal{K}_B^{\delta k})^*[\eta(\varphi)](\widetilde{x}).
\end{array}
\eeas
We can also verify that  
\beas
\hat{F}_D(x,\beta_{\gamma_c}) &=& \delta^2 \hat{F}_B(\hat{x},\delta\beta_{\gamma_c}),\\
\df{\p \hat{F}_D}{\p \nu}(x,\beta_{\gamma_c}) &=& \delta \df{\p \hat{F}_B}{\p \nu}(\hat{x}.\delta\beta_{\gamma_c}).
\eeas
Note that in the above identity, in the left-hand side we differentiate with respect to $x$ while in the right-hand side we differentiate with respect to $\tilde{x}$. To simplify the notation, we will use $\hat{F}_B$ to refer to $\hat{F}_B(\cdot,\delta\beta_{\gamma_c})$.

We rescale system \eqref{eq-heat layer system heat} to arrive at
\beas
\left\{
\begin{array}{rcl}
 -\mathcal{S}_B^{\delta \beta_{\gamma_m}} [\eta(\hat{p})] + \mathcal{S}_B^{\delta \beta_{\gamma_c}} [\eta(\hat{q})] &=& -\delta\hat{F}_B,  \\
\nm
   -\gamma_m \big(\f{1}{2}I + (\mathcal{K}_B^{\delta \beta_{\gamma_m}})^*\big)[\eta(\hat{p})] +  \gamma_c\big(-\f{1}{2}I + (\mathcal{K}_B^{\delta \beta_{\gamma_c}})^*\big)[\eta(\hat{q})] &=&  -\gamma_c \delta \df{\p \hat{F}_B}{\p \nu}.
\end{array} \right.
\eeas

For $\delta$ small enough, $\mathcal{S}_B^{\delta  \beta_{\gamma_c}}$ is invertible (see Appendix \ref{append1 heat}). Therefore, it follows that
\beas
\eta(\hat{p}) = (\mathcal{S}_B^{\delta\beta_{\gamma_m}})^{-1}\mathcal{S}_B^{\delta\beta_{\gamma_c}}[\eta(\hat{q})] +  (\mathcal{S}_B^{\delta\beta_{\gamma_m}})^{-1}\left[\delta\hat{F}_B\right].
\eeas
Hence, we have the following equation for $\eta(\hat{q})$:
\beas
\mathcal{A}^h_{B}(\delta)[\eta(\hat{q})] = f^h,
\eeas
where
\be
\begin{array}{rcl} \label{eq-A^h f^h heat}
 \mathcal{A}^h_B(\delta) &=&  -\gamma_m\big(\f{1}{2}I + (\mathcal{K}_B^{\delta \beta_{\gamma_m}})^*\big)(\mathcal{S}_B^{\delta\beta_{\gamma_m}})^{-1}\mathcal{S}_B^{\delta\beta_{\gamma_c}} +  \gamma_c \big(-\f{1}{2}I + (\mathcal{K}_B^{\delta \beta_{\gamma_c}})^*\big),\\
\nm
 f^h &=& - \gamma_c \delta \df{\p \hat{F}_B}{\p \nu} + \gamma_m\big(\f{1}{2}I + (\mathcal{K}_B^{\delta \beta_{\gamma_m}})^*\big)(\mathcal{S}_B^{\delta\beta_{\gamma_m}})^{-1}\left[\delta\hat{F}_B\right].
 \end{array}
\ee


\subsubsection{Proof of Theorem \ref{thm-Heat small volume heat}}
To express the solution of \eqref{eq-heat heat} on $\p D\times(0,T)$, asymptotically on the size of the nanoparticle $\delta$, we make use of the representation \eqref{Heat-solution heat}. We will compute an asymptotic expansion for $\eta(\hat{q})$ on $\delta$ to later compute $\delta\mathcal{S}_B^{\delta\beta_{\gamma_c}}[\eta(\hat{q})]$ on $\p B$, scale back to $D$ and take Laplace inverse. 

Using the asymptotic expansions of Appendix \ref{append1 heat} the following asymptotic for $\mathcal{A}^h_{B}(\delta)$ holds in $\mathcal{L}(\mathcal{H}^*(\p B))$ 
\beas
\mathcal{A}^h_{B}(\delta) = \mathcal{A}^h_0\mathcal + O(\delta^2 \log \delta),
\eeas
where
\beas
\mathcal{A}^h_0 = -\left(\f{1}{2} \big( \gamma_c + \gamma_m  \big)I - \big( \gamma_c - \gamma_m \big) \mathcal{K}_B^*\right).
\eeas
In the same manner, in $\mathcal{H}^*(\p B)$, 
\beas
f^h &=& -\gamma_c \delta \df{\p \hat{F}_B}{\p \nu} + \gamma_m\big(\f{1}{2}I + \mathcal{K}_B^*\big)\widetilde{\mathcal{S}}_B^{-1}[\delta\hat{F}_B] + O\left(\f{\delta^5 \log\delta }{\textnormal{dist}(\lambda_{\eps},\sigma(\mathcal{K}^*_{D}))^2}\right)\\
&=& -\gamma_c \delta \df{\p \hat{F}_B}{\p \nu} - \gamma_m\big(\f{1}{2}I - \mathcal{K}_B^*\big)\widetilde{\mathcal{S}}_B^{-1}[\delta\hat{F}_B] +\gamma_m\widetilde{\mathcal{S}}_B^{-1}[\delta\hat{F}_B] + O\left(\f{\delta^5 \log\delta }{\textnormal{dist}(\lambda_{\eps},\sigma(\mathcal{K}^*_{D}))^2}\right).
\eeas
Here the remainder comes from the fact that $\hat{F}_B = O\left(\f{\delta^2 }{\textnormal{dist}(\lambda_{\eps},\sigma(\mathcal{K}^*_{D}))^2}\right)$.


Note that $\Delta \hat{F}_B = \eta(L(g_u)) - \delta^2\beta_{\gamma_c}^2\hat{F}_B$ in $B$ and $\Delta \hat{F}_B = 0$ in $\R^2\backslash \bar{D}$. We can further verify that $\hat{F}_B$ satisfies the assumption required in Lemma \ref{lem-d/dn(u) 1/2-K^*u heat}. Thus we have
\beas
\big(\f{1}{2}I - \mathcal{K}_B^*\big)\widetilde{\mathcal{S}}_B^{-1}[\delta\hat{F}_B] = -\delta \df{\p \hat{F}_B}{\p \nu} + C_u\varphi_0 +\gamma_m\widetilde{\mathcal{S}}_B^{-1}[\delta\hat{F}_B]  + O\left(\f{\delta^5 }{\textnormal{dist}(\lambda_{\eps},\sigma(\mathcal{K}^*_{D}))^2}\right),
\eeas
where $C_u$ is a constant such that $C_u = O\left(\f{\delta^3}{\textnormal{dist}(\lambda_{\eps},\sigma(\mathcal{K}^*_{D}))^2}\right)$.

After replacing the above in the expression of $f^h$ we find that
\be \begin{array}{lcl} \label{eq-middle equation heat}
\eta(\hat{q}) &=& (\mathcal{A}^h_{B}(\delta))^{-1}f^h \\ 
&=&  (\lambda_{\gamma} I - \mathcal{K}_B^*)^{-1}[\delta \df{\p \hat{F}_B}{\p \nu}] + \df{C_u \gamma_m}{(\gamma_c-\gamma_m)(\lambda_{\gamma}-\f{1}{2})}\varphi_0 + O\left(\df{\delta^5 \log\delta}{\textnormal{dist}(\lambda_{\eps},\sigma(\mathcal{K}^*_{D}))^2}\right),
\end{array}
\ee
where
\beas
\lambda_{\gamma} = \f{\gamma_c + \gamma_m}{2(\gamma_c - \gamma_m)}.
\eeas
Finally, in $\mathcal{H}^*(\p B)$, 
\be \label{eq-asymtotic density heat}
\eta(\hat{\tau}) = -\delta^2\hat{F}_B - \delta\mathcal{S}_B^{\delta\beta_{\gamma_c}}(\lambda_{\gamma} I - \mathcal{K}_B^*)^{-1}[\df{\p \delta\hat{F}_B}{\p \nu}] - \df{C_u \gamma_m}{(\gamma_c-\gamma_m)(\lambda_{\gamma}-\f{1}{2})}\delta\mathcal{S}_B^{\delta\beta_{\gamma_c}}[\varphi_0]+ O\left(\f{\delta^6 \log\delta }{\textnormal{dist}(\lambda_{\eps},\sigma(\mathcal{K}^*_{D}))^2}\right).
\ee
It can be shown, from the regularity of the remainders, that the previous identity also holds in $L^2(\p B)$. 

Using Holder's inequality we can prove that
\beas
\|\mathcal{S}_B^{\delta\beta_{\gamma_c}}[\varphi]\|_{L^{\infty}(\p B)} \leq C \|\varphi\|_{L^2(\p B)},
\eeas
for some constant $C$. Hence, we find that identity \eqref{eq-asymtotic density heat} also holds true uniformly on $\p B$ and $C_u \delta\mathcal{S}_B^{\delta\beta_{\gamma_cf}}[\varphi_0](\tilde{x}) = O\left(\f{\delta^4 \log\delta }{\textnormal{dist}(\lambda_{\eps},\sigma(\mathcal{K}^*_{D}))^2}\right)$, uniformly in $\p B$. Scaling back to $D$ gives
\bea \label{eq-temperature laplace heat}
\hat{\tau}(x,s) = -\hat{F}_D(x,\beta_{\gamma_c}) - \mathcal{S}_D^{\beta_{\gamma_c}}(\lambda_{\gamma} I - \mathcal{K}_D^*)^{-1}[\df{\p \hat{F}_D(\cdot,\beta_{\gamma_c})}{\p \nu}] +  O\left(\f{\delta^4 \log\delta }{\textnormal{dist}(\lambda_{\eps},\sigma(\mathcal{K}^*_{D}))^2}\right).
\eea

Before we take the inverse Laplace transform to \eqref{eq-temperature laplace heat} we note that (see \cite{LubichSchneider}) 
\beas
L\big(K(x,\cdot, b_c)\big) = -G(x,\beta_{\gamma_c}),
\eeas
where $b_c := \f{ \rho_c C_c}{\gamma_c}$ and $K(x,\cdot,b_c)$ is the fundamental solution of the heat equation. In dimension two, $K$ is given by
$$
K(x,t, \gamma)= \f{e^{-\f{|x|^2}{4 b_c t}}}{4\pi b_c t}.
$$
We denote $K(x,y,t,t', b_c):=K(x-y,t-t', b_c)$. By the properties of the Laplace transform, we have
\beas
-\hat{F}_D(x,\beta_{\gamma_c}) = -\int_{D}G(x,y,\beta_{\gamma_c})L(g_u)(y)dy = L\left(\int_0^{\cdot} \int_{D} K(x,y,\cdot,t', b_c) g_u(y) dy dt'\right).
\eeas
We define $F_D$ as follows
\bea \label{def-F heat}
F_D(x,t, b_c) := \int_0^t \int_{D} K(x,y,t,t', b_c) g_u(y) dy dt'.
\eea
Similarly, we have that for a function $f$
\beas
-\int_{\p D}G(x,y,\beta_{\gamma_c})L(f)(y)dy = L\left(\int_0^{\cdot} \int_{\p D} K(x,y,\cdot,t', b_c) f(y,t') dy dt'\right).
\eeas
We define $\mathcal{V}_D^{b_c}$ as follows
\bea \label{def-Single heat layer heat}
\mathcal{V}_D^{b_c}[f](x,t) := \int_0^t \int_{\p D} K(x,y,t,t', b_c) f(y,t') dy dt'.
\eea
Finally, using Fubini's theorem and taking Laplace inverse we find that
\beas
 \tau(x,t) = F_D(x,t,b_c) - \mathcal{V}_D^{b_c}(\lambda_{\gamma} I - \mathcal{K}_D^*)^{-1}[\df{\p F_D(\cdot,\cdot,b_c)}{\p \nu}](x,t) +  O\left(\f{\delta^4 \log\delta }{\textnormal{dist}(\lambda_{\eps},\sigma(\mathcal{K}^*_{D}))^2}\right),
\eeas
uniformly in $(x,t)\in \p D\times (0,T)$.

\subsection{Temperature elevation at the plasmonic resonance}

Suppose that the incident wave is $u^i(x) = e^{i k_m d \cdot x}$, where $d$ is a unit vector. For a nanoparticle occupying a domain $D = z + \delta B$, the inner field $u$ solution to \eqref{eq-Helm heat} is given by Theorem \ref{thm12d heat}, which states that, in $L^2(D)$, 
\beas
u \approx e^{ik_m d \cdot z}\big(1 + i k_m\mathcal{S}_D \big(\lambda_{\eps}I - \mathcal{K}_D^*\big)^{-1} [\nu]\cdot d\big),
\eeas
and hence
\be \label{eq-u^2 heat}
|u|^2 \approx 1 + 2 k_m\Re\left(i\mathcal{S}_D \big(\lambda_{\eps}I - \mathcal{K}_D^*\big)^{-1} [\nu]\cdot d\right) + \Big|k_m\mathcal{S}_D \big(\lambda_{\eps}I - \mathcal{K}_D^*\big)^{-1} [\nu]\big)\cdot d\Big|^2.
\ee
Using Lemma \ref{lem-K_star_properties2d heat}, we can write
\beas
\mathcal{S}_D \big(\lambda_{\eps}I - \mathcal{K}_D^*\big)^{-1} [\nu]\cdot d = \sum_{j=1}^{\infty} \f{(\nu\cdot d,\varphi_j)_{\mathcal{H}^*}\mathcal{S}_D[\varphi_j]}{\lambda_{\eps}-\lambda_j},
\eeas
and therefore, for a given plasmonic frequency $\om$,  we have
\beas
\mathcal{S}_D \big(\lambda_{\eps}I - \mathcal{K}_D^*\big)^{-1} [\nu]\cdot d \approx \f{(\nu\cdot d,\varphi_{j^*})_{\mathcal{H}^*}\mathcal{S}_D[\varphi_{j^*}]}{\lambda_{\eps}(\om)-\lambda_{j^*}}.
\eeas
Here $j^*$ is such that $\lambda_{j^*} = \Re(\lambda_{\eps}(\om))$ and the eignevalue $\lambda_{j^*}$ is assumed to be simple. If this was not the case, $(\nu\cdot d,\varphi_{j^*})_{\mathcal{H}^*}\mathcal{S}_D[\varphi_{j^*}]$ should be replaced by the corresponding sum over an orthonormal basis of eigenfunctions for the eigenspace associated to $\lambda_{j^*}$.

Replacing in \eqref{eq-u^2 heat} we find
\beas
|u|^2 \approx 1 + 2k_m\f{(\nu\cdot d,\varphi_{j^*})_{\mathcal{H}^*}\mathcal{S}_D[\varphi_{j^*}]}{|\lambda_{\eps}(\om)-\lambda_{j^*}|} + k_m^2\f{(\nu\cdot d,\varphi_{j^*})^2_{\mathcal{H}^*}\mathcal{S}_D[\varphi_{j^*}]^2}{|\lambda_{\eps}(\om)-\lambda_{j^*}|^2}.
\eeas

Thus, at a plasmonic resonance $\om$,
\beas \label{eq-conv noyau chaleur at plasmonic heat}
F_D[g_u](x,t, b_c) &\approx & \left(F_D[1] + 2k_m\f{(\nu\cdot d,\varphi_{j^*})_{\mathcal{H}^*}}{|\lambda_{\eps}(\om)-\lambda_{j^*}|}F_D[\mathcal{S}_D[\varphi_{j^*}]] + k_m^2\f{(\nu\cdot d,\varphi_{j^*})^2_{\mathcal{H}^*}}{|\lambda_{\eps}(\om)-\lambda_{j^*}|^2}F_D[\mathcal{S}_D[\varphi_{j^*}]^2]\right)(x,t, b_c) ,\\
\df{\p F_D(x,t,b_c)}{\p \nu} &\approx & \left(2k_m\f{(\nu\cdot d,\varphi_{j^*})_{\mathcal{H}^*}}{|\lambda_{\eps}(\om)-\lambda_{j^*}|}\df{\p F_D[\mathcal{S}_D[\varphi_{j^*}]]}{\p \nu}+ k_m^2\f{(\nu\cdot d,\varphi_{j^*})^2_{\mathcal{H}^*}}{|\lambda_{\eps}(\om)-\lambda_{j^*}|^2}\df{\p F_D[\mathcal{S}_D[\varphi_{j^*}]^2]}{\p \nu}\right)(x,t, b_c).
\eeas
Then, the temperature on the boundary of a nanoparticle at the plasmonic resonance can be estimated by plugging the above approximations of $F_D$ and $\df{\p F_D(x,t,b_c)}{\p \nu} $ into
\beas
  \tau(x,t) = F_D(x,t,b_c) - \mathcal{V}_D^{b_c}(\lambda_{\gamma} I - \mathcal{K}_D^*)^{-1}[\df{\p F_D(\cdot,\cdot,b_c)}{\p \nu}](x,t) +  O\left(\f{\delta^4 \log\delta }{\textnormal{dist}(\lambda_{\eps},\sigma(\mathcal{K}^*_{D}))^2}\right).
\eeas

\subsection{Temperature elevation for two close-to-touching particles}

Lemma \ref{lem-d/dn(u) 1/2-K^*u heat} implies that $$\df{\p F_D(x,t,b_c)}{\p \nu} = -\big(\f{1}{2}I - \mathcal{K}_D^*\big)\widetilde{\mathcal{S}}_D^{-1}[F_D](x,t) + O\left(\f{\delta^4 \log\delta }{\textnormal{dist}(\lambda_{\eps},\sigma(\mathcal{K}^*_{D}))^2}\right).$$
Therefore, we can write the temperature on the boundary of the nanoparticle as
\be \label{eq-temperature bord nanoparticle heat}
\tau(x,t) = F_D(x,t,b_c) + \mathcal{V}_D^{b_c}(\lambda_{\gamma} I - \mathcal{K}_D^*)^{-1}\mathcal{P}_{\mathcal{H}^*\backslash E_{\f{1}{2}}}[\df{\p F_D(\cdot,\cdot,b_c)}{\p \nu}](x,t) +  O\left(\f{\delta^4 \log\delta }{\textnormal{dist}(\lambda_{\eps},\sigma(\mathcal{K}^*_{D}))^2}\right),
\ee
where $\mathcal{P}_{\mathcal{H}^*\backslash E_{\f{1}{2}}}$ is the projection into $ \mathcal{H}^*\backslash E_{\f{1}{2}}$: the complement in $\mathcal{\mathcal{H}^*(\p D)}$ of the eigenspace associated to the eigenvalue $\f{1}{2}$ of $\mathcal{K}^*_D$. This implies that, even if $\lambda_{\gamma}$ is close to $\f{1}{2}$, the quantity $(\lambda_{\gamma} I - \mathcal{K}_D^*)^{-1}\mathcal{P}_{\mathcal{H}^*\backslash E_{\f{1}{2}}}[\df{\p F_D(\cdot,\cdot,b_c)}{\p \nu}](x,t)$ will remain of order $O\left(\f{\delta^2 }{\textnormal{dist}(\lambda_{\eps},\sigma(\mathcal{K}^*_{D}))^2}\right)$, provided that the second largest eigenvalue of $\mathcal{K}_D^*$ is not close to $\f{1}{2}$. 

Even if this is in general the case for smooth boundaries $\p D$, it turns out that for nanoparticles with two connected close-to-touching subparts with contact of order $m$, a family of eigenvalues of $\mathcal{K}^*_D$ in $\mathcal{H}^*\backslash E_{\f{1}{2}}$ approaches $\f{1}{2}$ as (see \cite{triki})
\beas
\lambda_n^\zeta \sim \f{1}{2} - c_n\zeta^{1-\f{1}{m}} + o(\zeta^{1-\f{1}{m}}),
\eeas 
where $\zeta$ is the distance between connected subparts and $c_n$ is an increasing sequence of positive numbers.

Now, $\lambda_{\gamma}\approx \f{1}{2}$ is the kind of situations encountered for metallic nanoparticles immersed in water or some biological tissue. As an example, the thermal conductivity of gold is $\gamma_c = 318 \f{W}{m K}$ and that of pure water is $\gamma_m = 0.6 \f{W}{m K}$. This gives $\lambda_{\gamma} \approx 0.5019$.

In view of this, the second term in \eqref{eq-temperature bord nanoparticle heat} may increase considerably for some type of close-to-touching particles.

We stress, nevertheless, that this is not the general case. For a more refined analysis, asymptotics of the eigenfunctions of $\mathcal{K}_D^*$ should be also studied.

\section{Numerical results} \label{sec-numeric heat}

The numerical experiments for this work can be divided into two  parts. The first one is the Helmholtz equation solution approximation, which is obtained by using Theorem \ref{thm12d heat}. The second part is the Heat equation solution computation, which is obtained using Theorem \ref{thm-Heat small volume heat}. 

The major tasks surrounding the numerical implementation of these formulas are integrating against a singular kernel. The numerical computations of the operators $F_D[\cdot]$ and $\partial_\nu F_D[\cdot]$ can be achieved by meshing the domain $D$ and integrating semi-analytically inside the triangles that are close to the singularities. We used  the following formula to avoid numerical differentiation: 

\begin{equation}
\label{partialF} 
\f{\p F_D (x,t,b_c)}{\p \nu} = \frac{1}{2\pi b_c} \displaystyle \int_D \exp\left(\frac{-|x-y|^2}{4 b_c t}\right) \frac{\left< y-x,\nu_x \right>}{|x-y|^2} g_u(y) dy, \quad x\in \partial D.
\end{equation}

For all the presented simulations, we considered an incident plane wave given by $$u^i(x) = e^{i k_m d \cdot x},$$ where $d = (1,1) / \sqrt{2} \in \mathbb{R}^2$ is the illumination direction and $k_m = 2\pi/750\cdot 10^9$ is the frequency (in the red range). The considered nanoparticles are ellipses with semi-axes $30nm$ and $20nm$, respectively.

It is worth noticing that the illumination direction $d$ is relevant solely in the asymptotic formula in  Theorem \ref{thm12d heat}. Its role is to define the coefficients of a linear combination of both components of $\mathcal{S}_D(\lambda_\epsilon I - \mathcal{K}_D^*)^{-1} [v] \in \mathbb{R}^2$. We will see from the numerical simulations that this is fundamental if we wish to maximize the produced electromagnetic field, and therefore the generated heat inside the nanoparticles.

With respect to the asymptotic formula established in Theorem \ref{thm12d heat}, besides the nanoparticle's shape $D$, the sole parameter that is left is $\lambda_\epsilon$. For all the following simulations we will consider this as a free parameter that we will use to excite the eigenvalues of the Neumann-Poincar\'e operator and hence to generate resonances. The physical justification that allows us to do this is based on the Drude model \cite{pierre}. Whenever we mention that we approach a particular eigenvalue $\lambda_j$ of $\mathcal{K}_D^*$, we will adopt $\lambda_\epsilon = \lambda_j + 0.001i$.

With respect to the heat equation coefficients, we use realistic values of gold for nanoparticles, and water for tissues.

\subsection{ Single-particle simulation }

We consider one elliptical nanoparticle $D \Subset \mathbb{R}^2$ centered at the origin, with its  semi-major axis aligned with the $x$-axis.

\subsubsection{ Single-particle Helmholtz resonance  }

Resonance is achieved by approaching the eigenvalues of the Neumann-Poincar\'e operator $\mathcal{K}_D^*$ with $\lambda_\epsilon$, and afterwards applying it to  each of the components of the normal  $\nu$ to $\partial D$. It turns out that for some eigenfunctions of $\mathcal{K}_D^*$, the normal of the shape is almost orthogonal, in $\mathcal{H}^*(\p D)$, to them. Therefore, we cannot observe resonance for their associated eigenvalues; see \cite{algebraic}. In Figure \ref{fig:1pResonance} we can see values of the inner product between the eigenfunctions of $\mathcal{K}_D^*$ and the  components $\nu_x$ and $\nu_y$ of $\nu$. Figure \ref{fig:1pResonance}  suggests us which are the available resonant modes with the respective strength of each coordinate.
\begin{figure}[h!]
	\centering
	\begin{minipage}[c]{.6\textwidth}
		\centering
		\includegraphics[width=\linewidth]{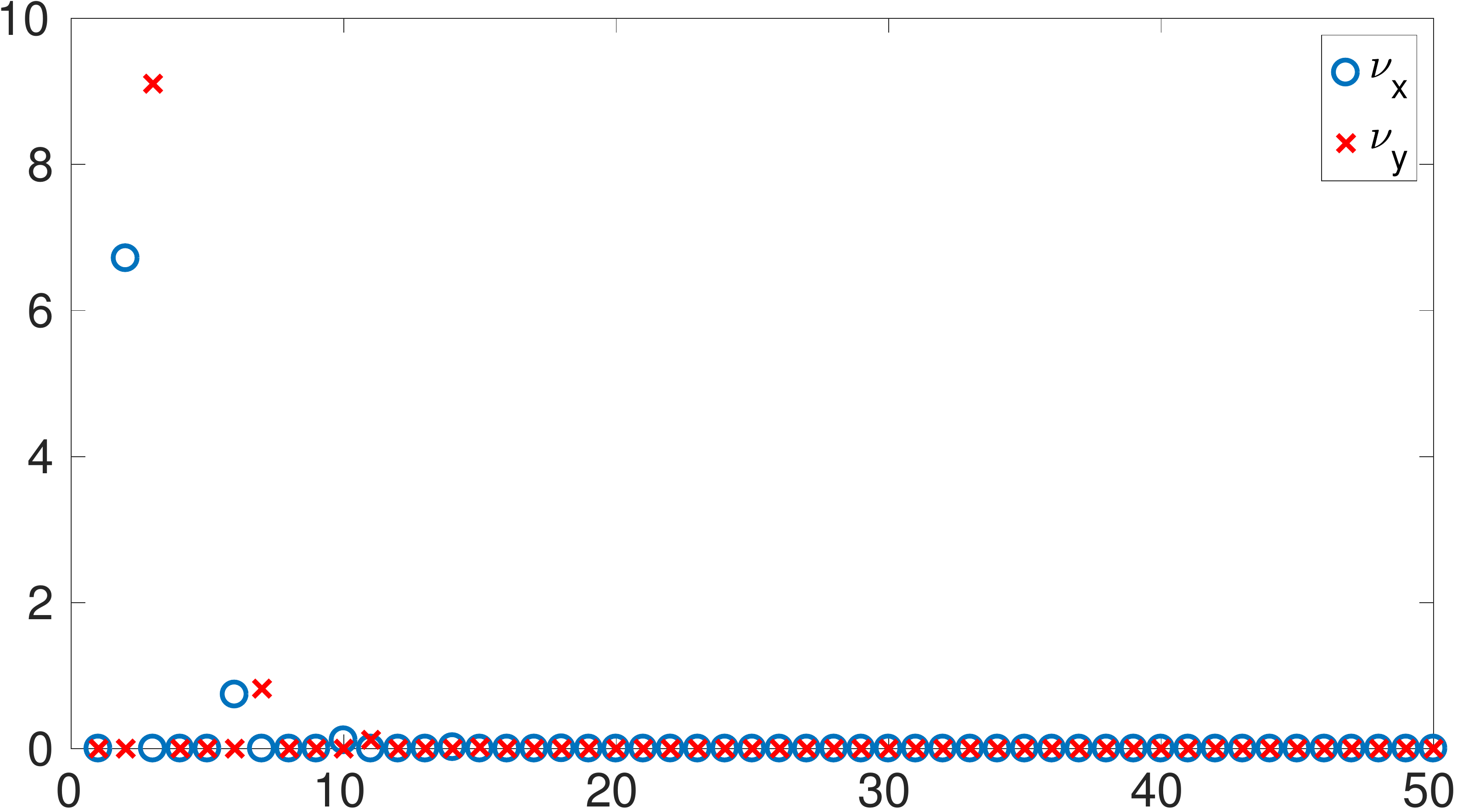}
	\end{minipage}
	\caption{Inner product in $\mathcal{H}^*(\p D)$ between the eigenfunctions of $\mathcal{K}_D^*$ and the components $\nu_x$ and $\nu_y$ of the normal $\nu$ to $\partial D$. }
	\label{fig:1pResonance}
\end{figure}
In Figure \ref{fig:1pHelmResonance} we present the absolute value of the inner field for the first three resonant modes, corresponding to the second, third and sixth eigenvalue of $\mathcal{K}_D^*$, respectively. In Figure \ref{fig:1pHelmResonance2} we decompose the inner field into the zeroth-order and the first-order terms respectively given by $u^i(z) + \delta(x-z) \nabla u^{i}(z)$ and $\mathcal{S}_D \big(\lambda_{\eps}I - \mathcal{K}_D^*\big)^{-1} [\nu]\cdot \nabla u^{i}(z)$.  Figure \ref{fig:1pHelmResonance3}  shows  the components of the vector   $\mathcal{S}_D(\lambda_\epsilon I - \mathcal{K}_D^*)^{-1} [v]$. 

\begin{figure}[h!]
	\centering
	\begin{minipage}[c]{.28\textwidth}
		\centering
		\scriptsize First resonance mode \ \ \ \\ \
	\end{minipage}
	\hspace{.5cm}
	\begin{minipage}[c]{.28\textwidth}
		\centering
		\scriptsize Second resonance mode  \ \ \ \\ \
	\end{minipage}
	\hspace{.5cm}
	\begin{minipage}[c]{.28\textwidth}
		\centering
		\scriptsize Third resonance mode  \ \ \ \\ \
	\end{minipage}
	\\
	\begin{minipage}[c]{.28\textwidth}
		\centering
		\includegraphics[width=\linewidth]{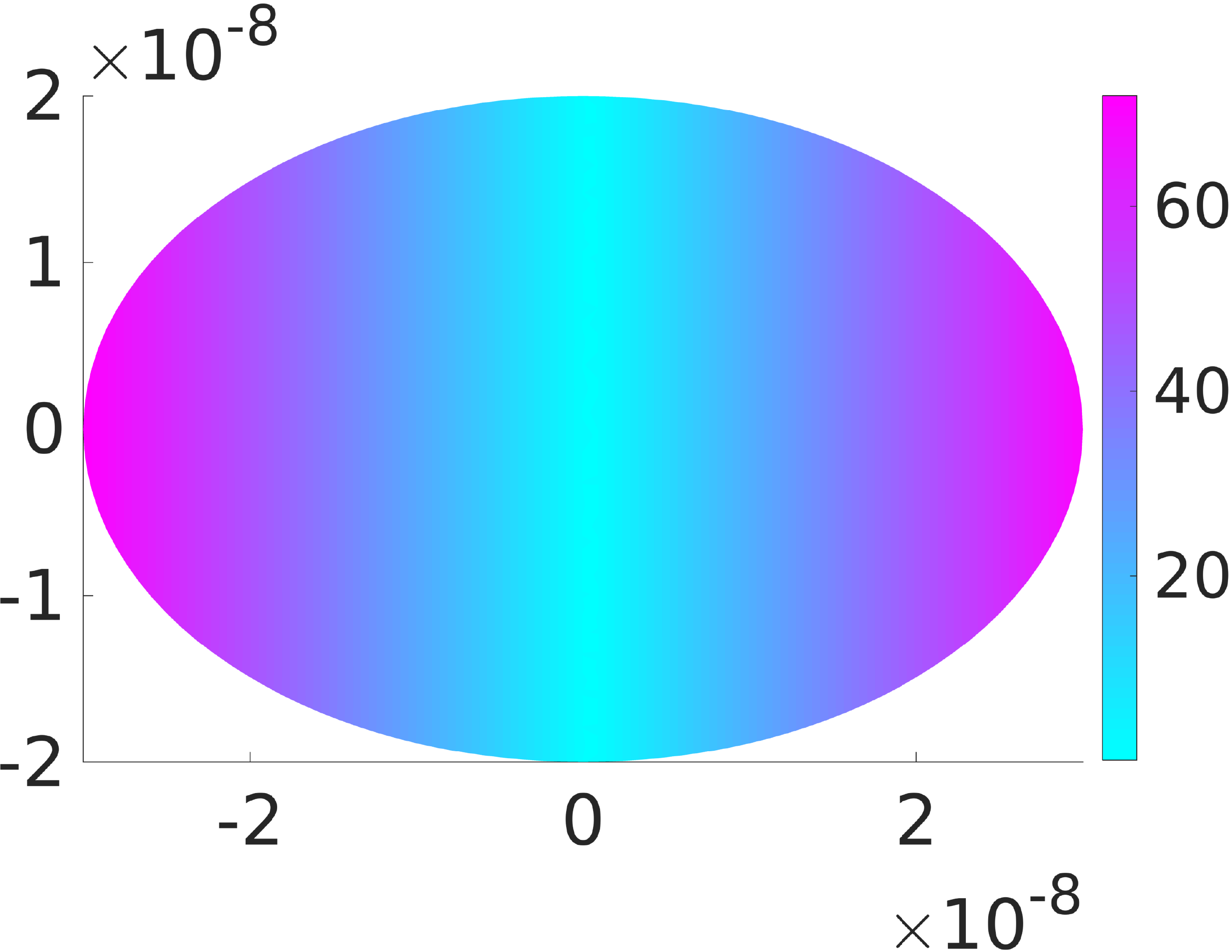}
	\end{minipage}
	\hspace{.5cm}
	\begin{minipage}[c]{.28\textwidth}
		\centering
		\includegraphics[width=\linewidth]{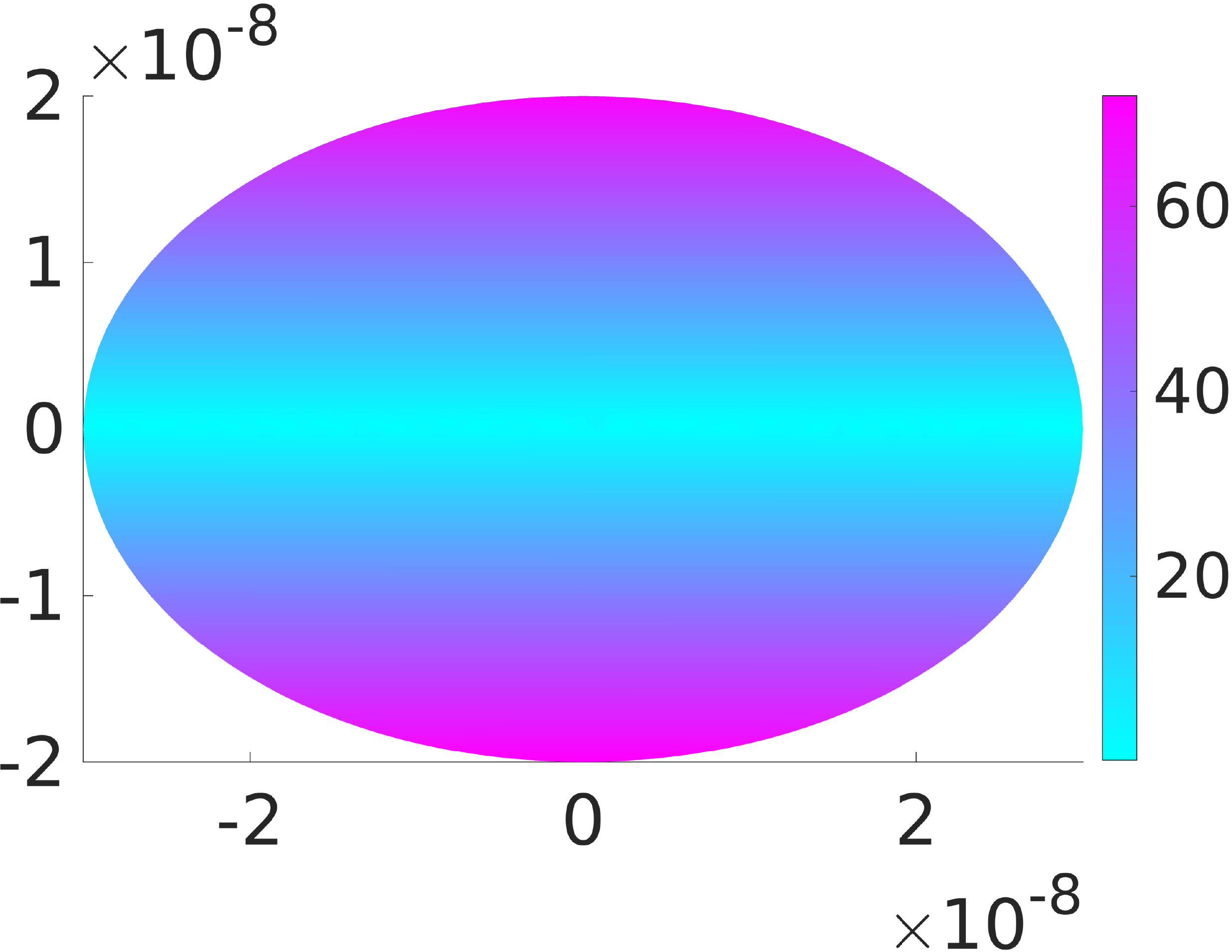}
	\end{minipage}
	\hspace{.5cm}
	\begin{minipage}[c]{.28\textwidth}
		\centering
		\includegraphics[width=\linewidth]{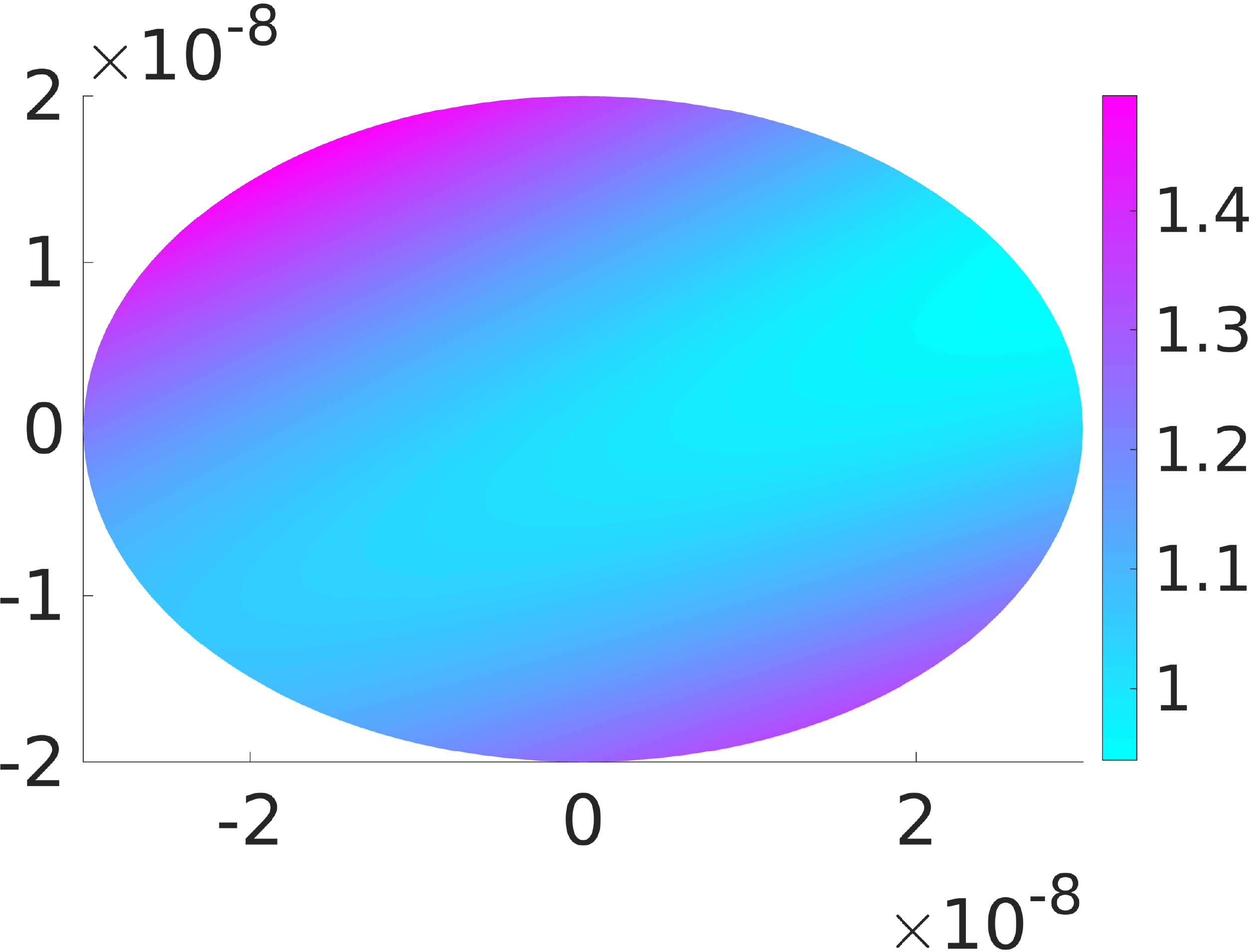}
	\end{minipage}
	\caption{Absolute value of the electromagnetic field inside the nanoparticle at the first resonant modes, being those when $\lambda_\epsilon$ approaches the second, third and sixth eigenvalue of $\mathcal{K}_D^*$.}
	\label{fig:1pHelmResonance}
\end{figure}

\begin{figure}[h!]
	\centering
	\begin{minipage}[c]{.28\textwidth}
		\centering
		\scriptsize Zeroth-order component  \ \ \
	\end{minipage}
	\hspace{.5cm}
	\begin{minipage}[c]{.28\textwidth}
		\centering
		\scriptsize First-order component \ \ \
	\end{minipage}
	\hspace{.5cm}
	\\
	\begin{minipage}[c]{.28\textwidth}
		\centering
		\includegraphics[width=\linewidth]{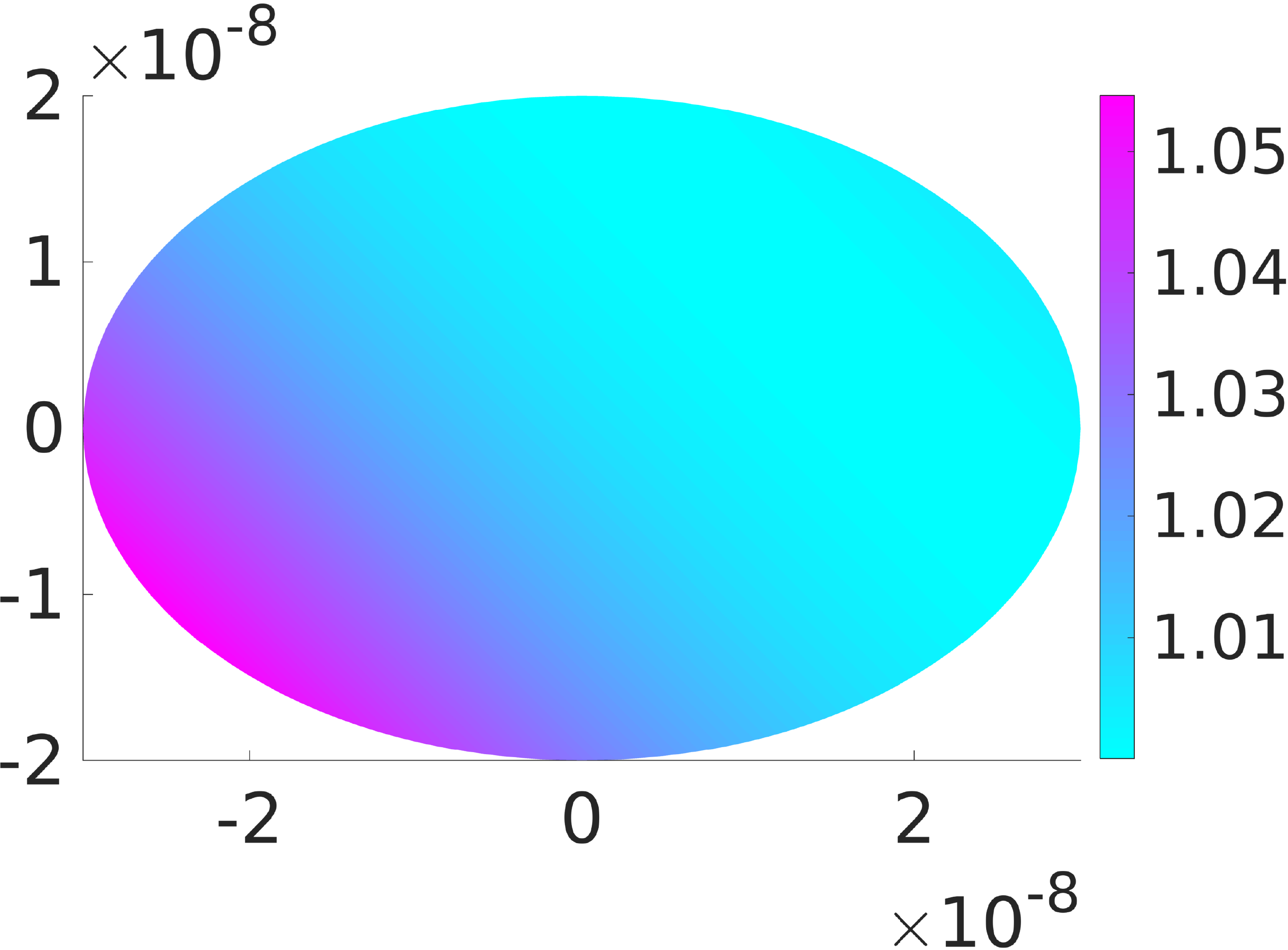}
	\end{minipage}
	\hspace{.5cm}
	\begin{minipage}[c]{.28\textwidth}
		\centering
		\includegraphics[width=\linewidth]{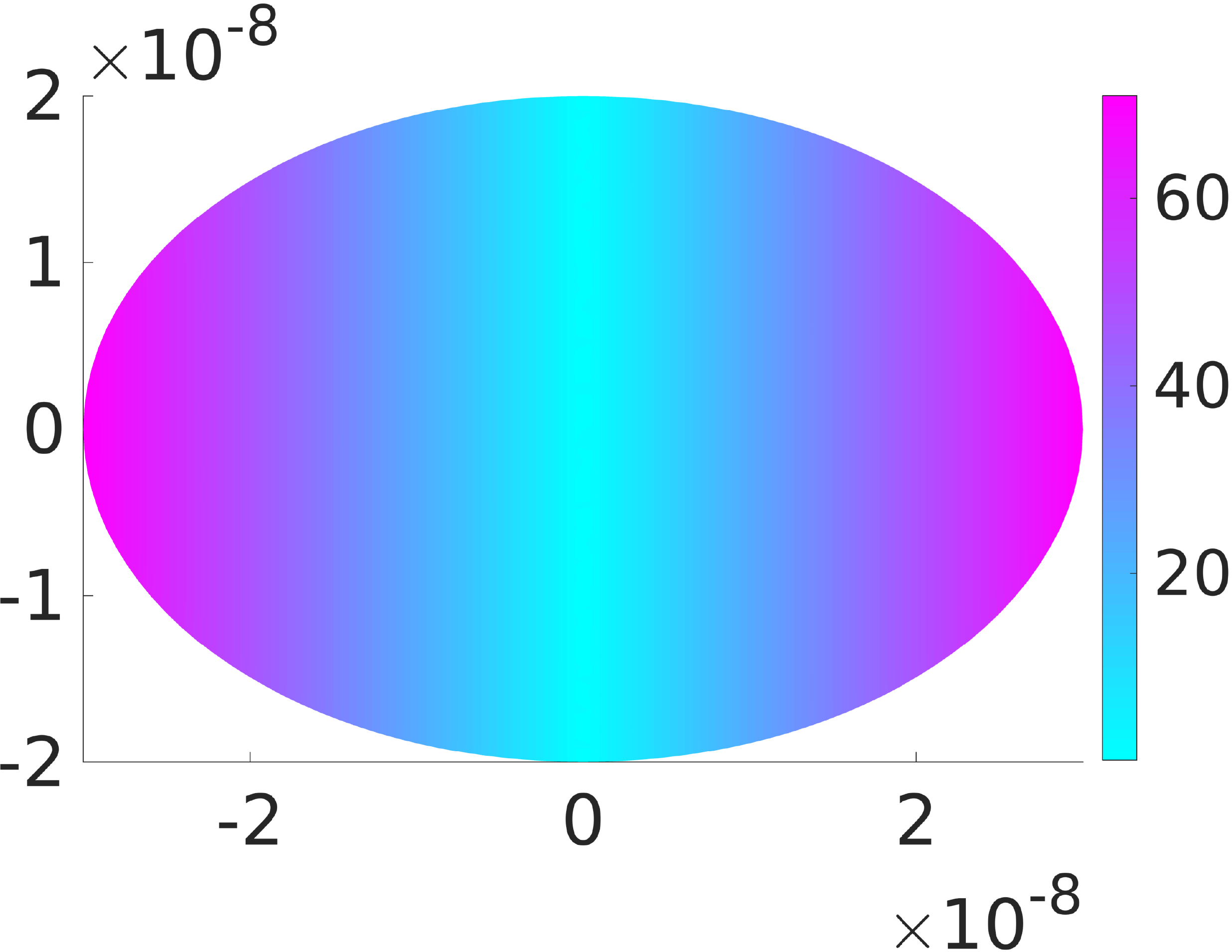}
	\end{minipage}
	\hspace{.5cm}
	\caption{First resonant mode of the nanoparticle decomposed in its first- and second-order term in the formula given by Theorem \ref{thm12d heat}. Both images are absolute values of the respective component.}
	\label{fig:1pHelmResonance2}
\end{figure}

\begin{figure}[h!]
	\centering
	\begin{minipage}[c]{.28\textwidth}
		\centering
		\scriptsize The $x$-component  \ \ \ \ \ \ \ \ \
	\end{minipage}
	\hspace{.5cm}
	\begin{minipage}[c]{.28\textwidth}
		\centering
		\scriptsize The $y$-component \ \ \ \ \ \ \ \ \
	\end{minipage}
	\hspace{.5cm}
	\\
	\begin{minipage}[c]{.28\textwidth}
		\centering
		\includegraphics[width=\linewidth]{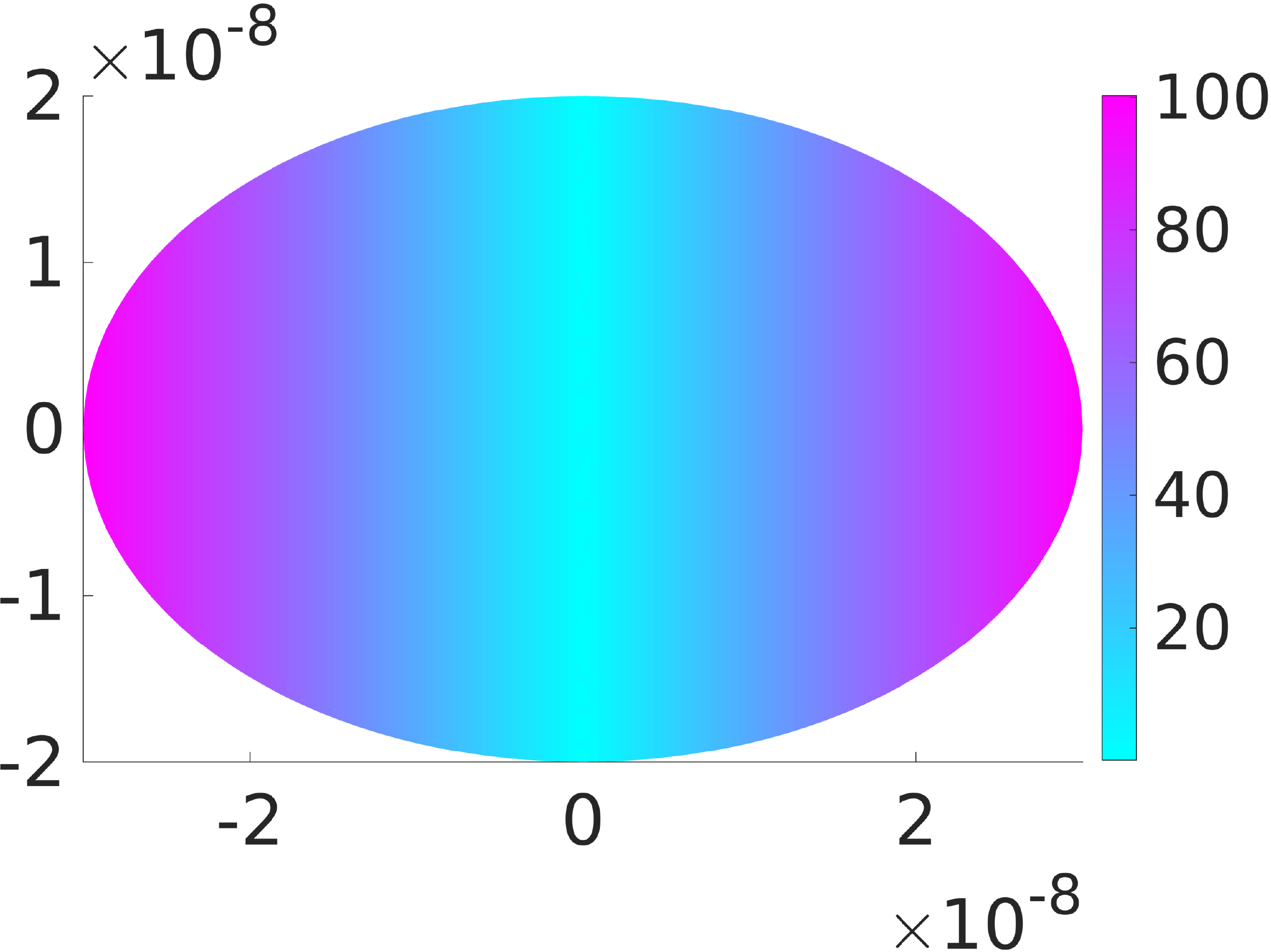}
	\end{minipage}
	\hspace{.5cm}
	\begin{minipage}[c]{.28\textwidth}
		\centering
		\includegraphics[width=\linewidth]{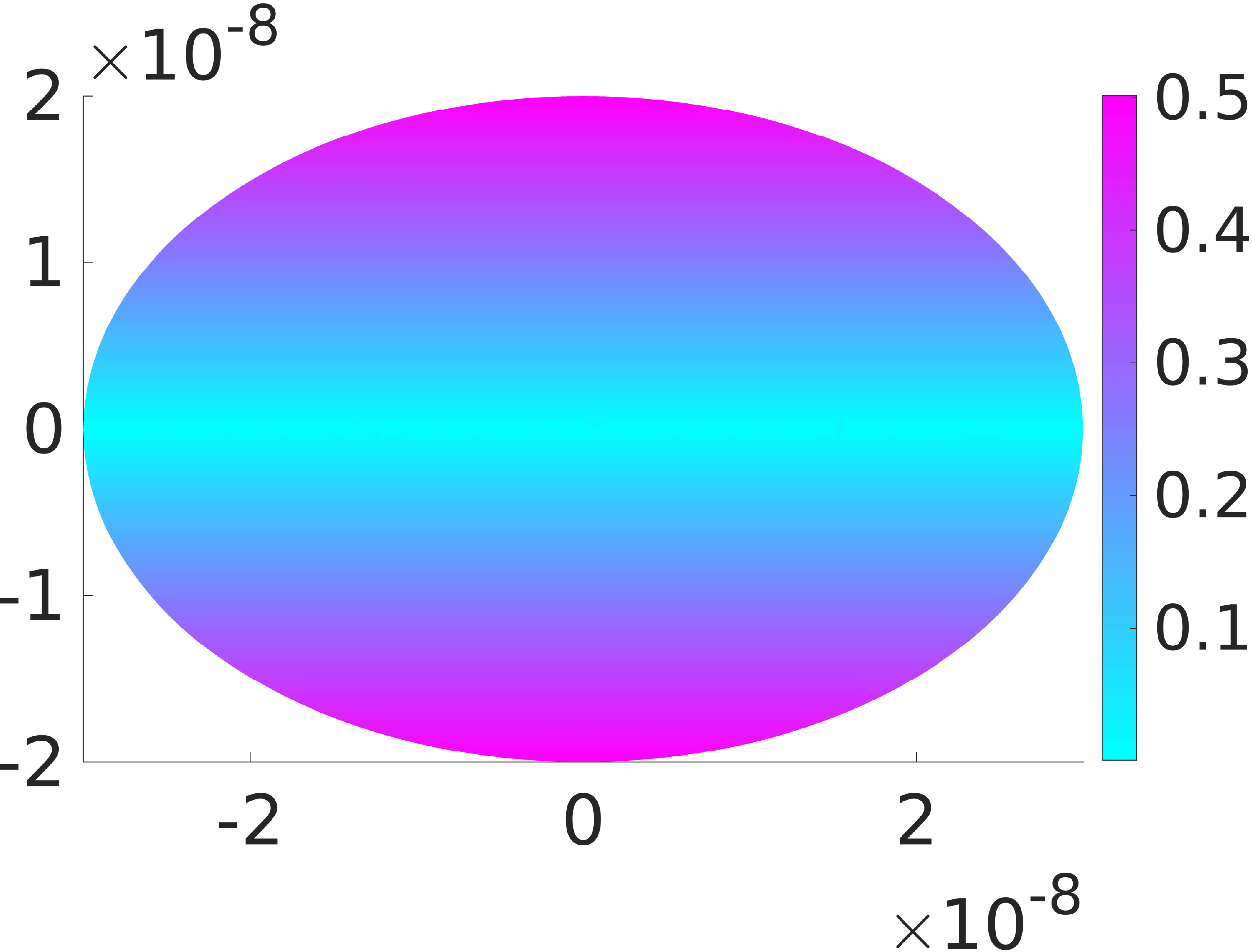}
	\end{minipage}
	\hspace{.5cm}
	\caption{Absolute value of the vectorial components of the first-order term for the first resonant mode.}
	\label{fig:1pHelmResonance3}
\end{figure}

From Figure \ref{fig:1pHelmResonance2}, we can see that when we excite the nanoparticle at its resonant mode, the largest contribution to the electromagnetic field comes from the first-order term of the small volume expansion formula established in Theorem \ref{thm12d heat}. 

Observing the vectorial components of the first-order term in Figure \ref{fig:1pHelmResonance3} tells us how important is the illumination direction as the $x$-component is significantly stronger than the $y$-component. If we wish to maximize the electromagnetic field and therefore the generated heat, the recommended illumination direction would be around $d = (1,0)^t$ (with $t$ being the transpose), as it was initially suggested by Figure \ref{fig:1pResonance}.

\subsubsection{Single-particle surface heat generation }

Considering the electromagnetic field inside the nanoparticle given by the first resonant mode presented in Figure \ref{fig:1pHelmResonance}, following the formula given by Theorem 
\ref{thm-Heat small volume heat}, we compute the generated heat on the surface of the nanoparticle. In Figure \ref{fig:1pHeat} we plot the generated heat in three dimensions and present a two dimensional plot obtained by parameterizing the boundary. In Figure \ref{fig:1pHeat2} we decompose the heat in its first- and second-order terms given by formula \ref{thm-Heat small volume heat}, being  $F_D(x,t,b_c)$ and $-\mathcal{V}_D^{b_c}(\lambda_{\gamma} I - \mathcal{K}_D^*)^{-1}[\df{\p F_D(\cdot,\cdot,b_c)}{\p \nu}](x,t)$ respectively. In Figure \ref{fig:1pHeat3}, we integrate the total heat on the boundary and  plot it as a function of time, for each component.

\begin{figure}[h!]
	\centering
	\begin{minipage}[c]{.35\textwidth}
		\centering
		\scriptsize 3D plot of generated heat at time $T = 1 $ \ \ \ \ \ \ \ \ \
	\end{minipage}
	\hspace{.5cm}
	\begin{minipage}[c]{.35\textwidth}
		\centering
		\scriptsize 2D plot of generated heat at time $T=1$ \ \ \ \ \ \ \ \ \
	\end{minipage}
	\hspace{.5cm}
	\\
	\begin{minipage}[c]{.35\textwidth}
		\centering
		\includegraphics[width=\linewidth]{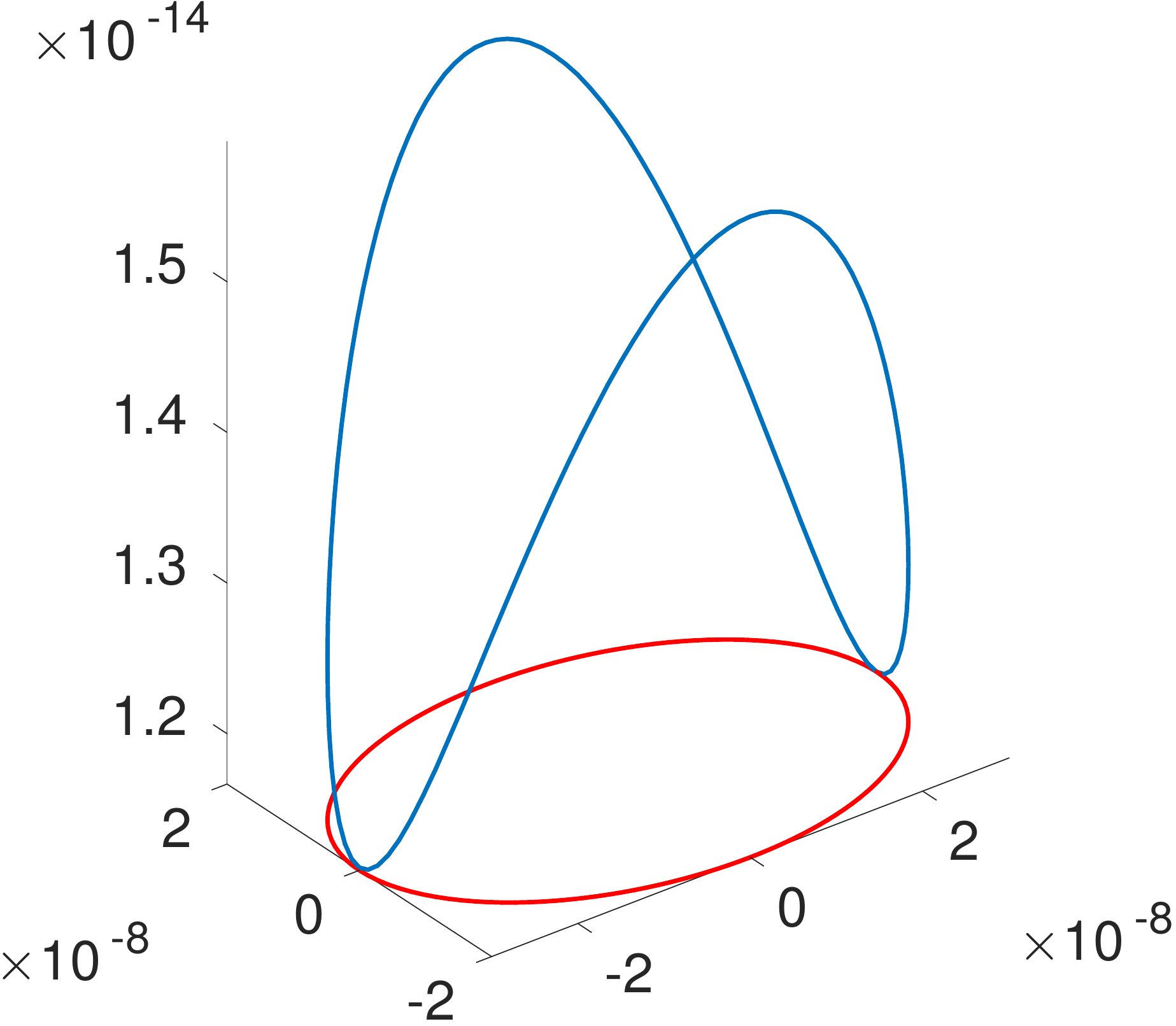}
	\end{minipage}
	\hspace{.5cm}
	\begin{minipage}[c]{.35\textwidth}
		\centering
		\includegraphics[width=\linewidth]{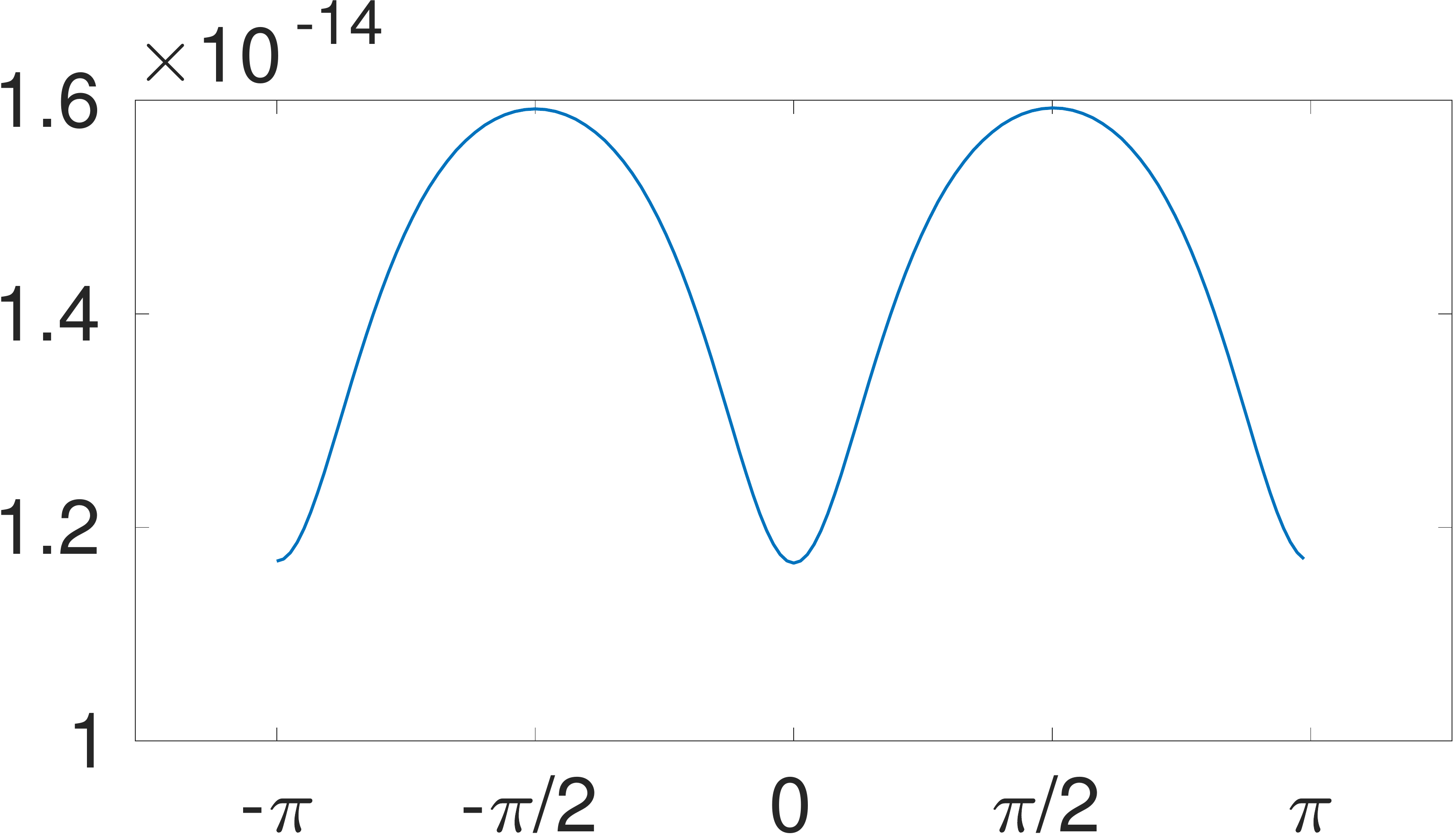}
	\end{minipage}
	\hspace{.5cm}
		\caption{At the left-hand side, we can see a three-dimensional plot of the nanoparticle heat, the red shape is a reference value to show where the nanoparticle is located. At the right-hand side we can see a two-dimensional plot of the 
		generated heat, where the boundary was parametrized following $p(\theta) = (a\cos(\theta), b\sin(\theta)), \ \theta \in [-\pi,\pi]$, with $a$ and $b$ being the semi-major and semi-minor axes, respectively.}
	\label{fig:1pHeat}
\end{figure}

\begin{figure}[h!]
	\centering
	\begin{minipage}[c]{.35\textwidth}
		\centering
		\scriptsize Two-dimensional plot of the zeroth-order term at $T=1$ \ \ \ \ \ \ \ \ \
	\end{minipage}
	\hspace{.5cm}
	\begin{minipage}[c]{.35\textwidth}
		\centering
		\scriptsize Two-dimensional plot of the first-order term at $T=1$  \ \ \ \ \ \ \ \ \
	\end{minipage}
	\hspace{.5cm}
	\\
	\begin{minipage}[c]{.35\textwidth}
		\centering
		\includegraphics[width=\linewidth]{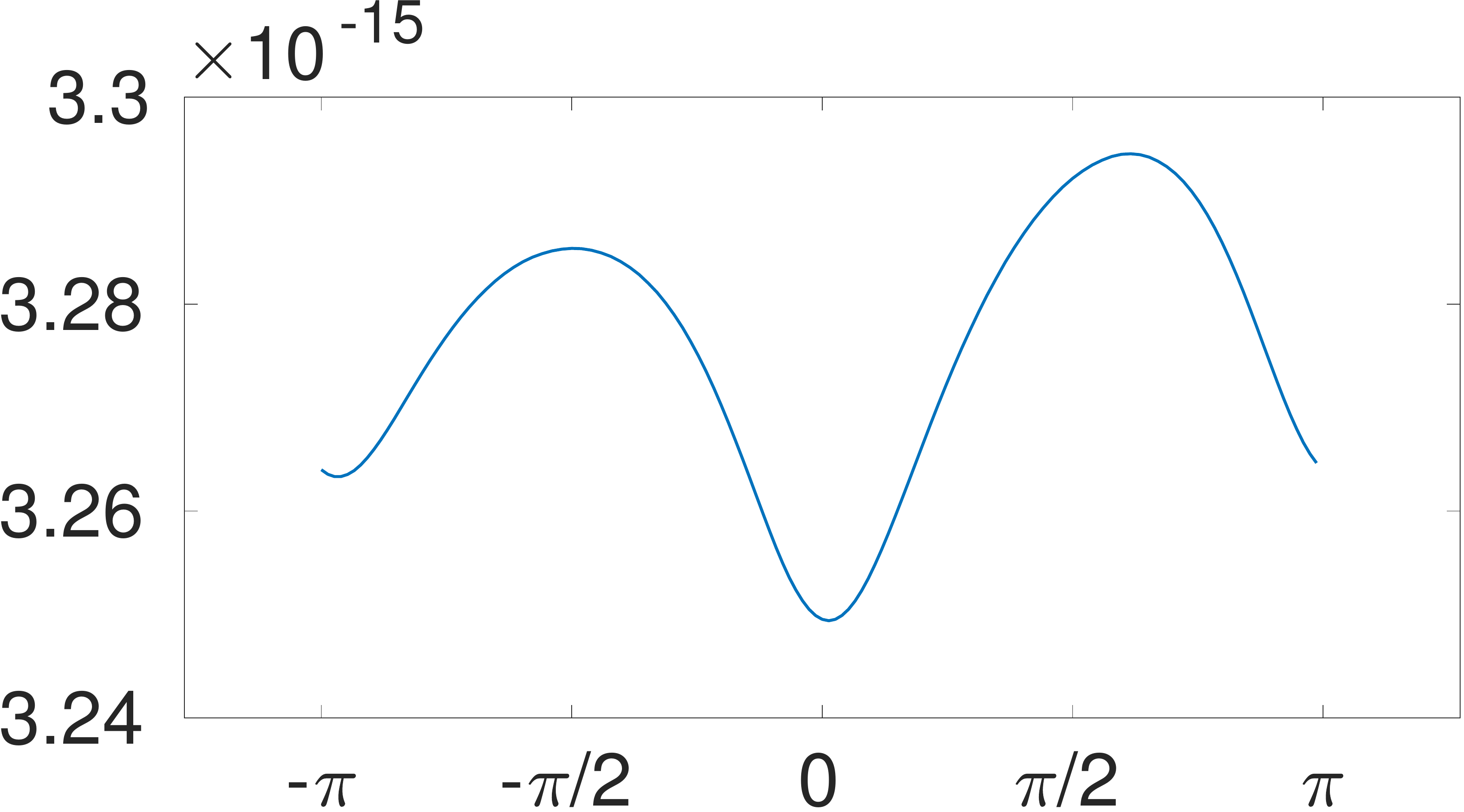}
	\end{minipage}
	\hspace{.5cm}
	\begin{minipage}[c]{.35\textwidth}
		\centering
		\includegraphics[width=\linewidth]{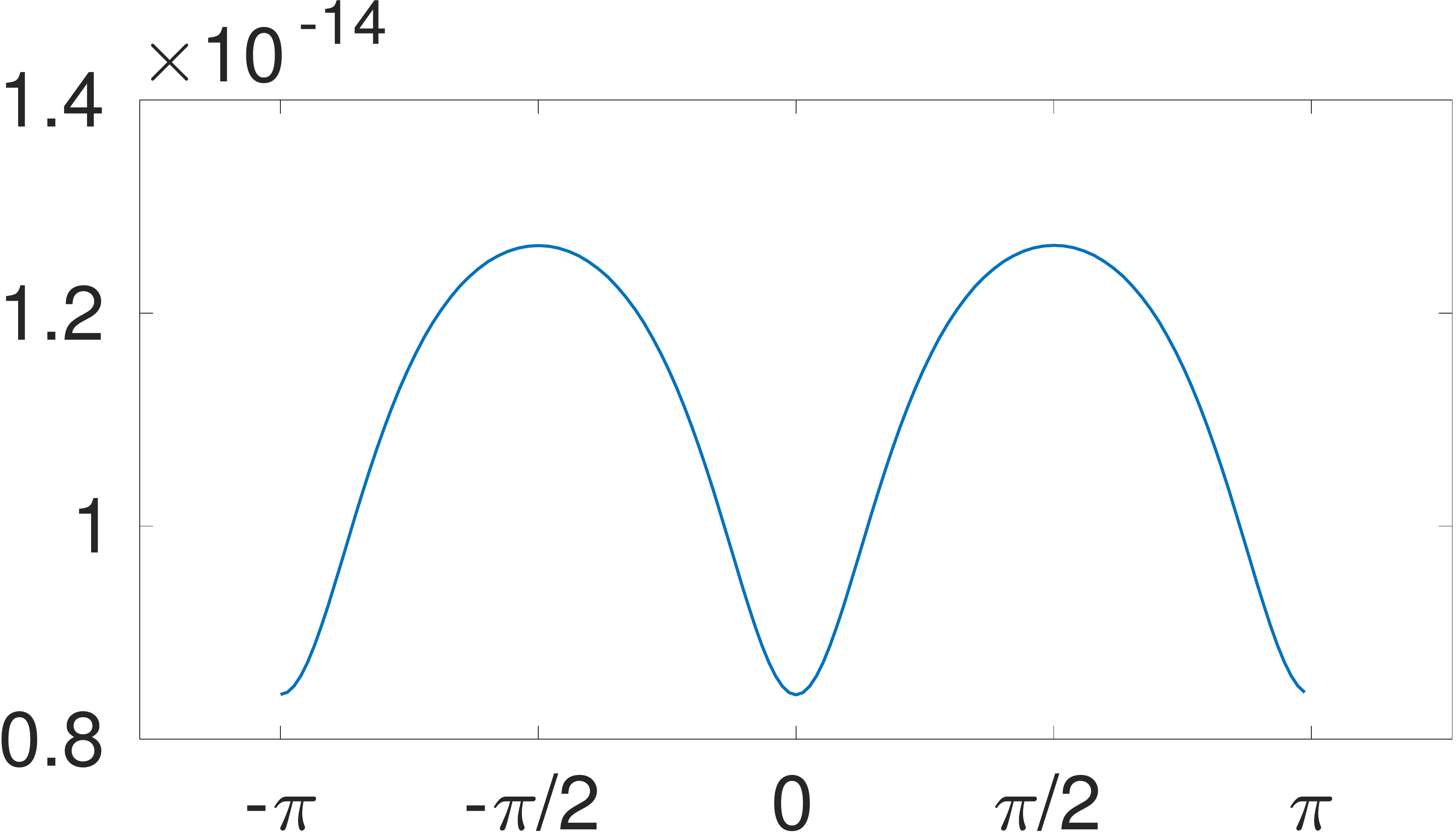}
	\end{minipage}
	\hspace{.5cm}
	\caption{Two-dimensional plots of the zeroth- and first-order components of the heat on the boundary when time is equal to one. As time goes on, each point of the graph increases, but the general shape is preserved.}
	\label{fig:1pHeat2}
\end{figure}

\begin{figure}[h!]
	\centering
	\begin{minipage}[c]{.28\textwidth}
		\centering
		\scriptsize Integrated heat over time \ \ \
	\end{minipage}
	\hspace{.5cm}
	\begin{minipage}[c]{.28\textwidth}
		\centering
		\scriptsize Integrated zeroth-order component of heat over time  \ \ \
	\end{minipage}
	\hspace{.5cm}
	\begin{minipage}[c]{.28\textwidth}
		\centering
		\scriptsize Integrated first-order component of heat over time \ \ \
	\end{minipage}
	\\
	\begin{minipage}[c]{.28\textwidth}
		\centering
		\includegraphics[width=\linewidth]{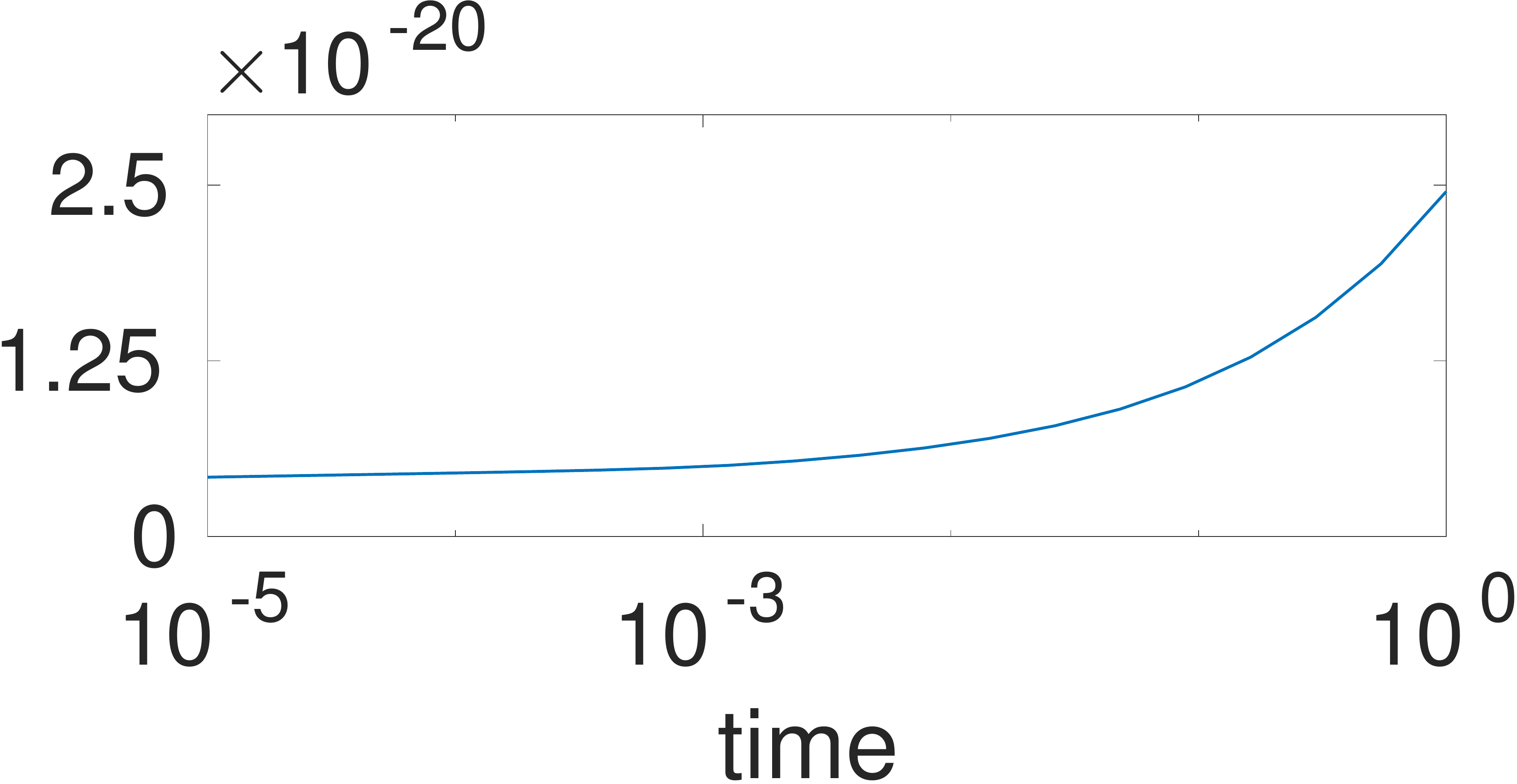}
	\end{minipage}
	\hspace{.5cm}
	\begin{minipage}[c]{.28\textwidth}
		\centering
		\includegraphics[width=\linewidth]{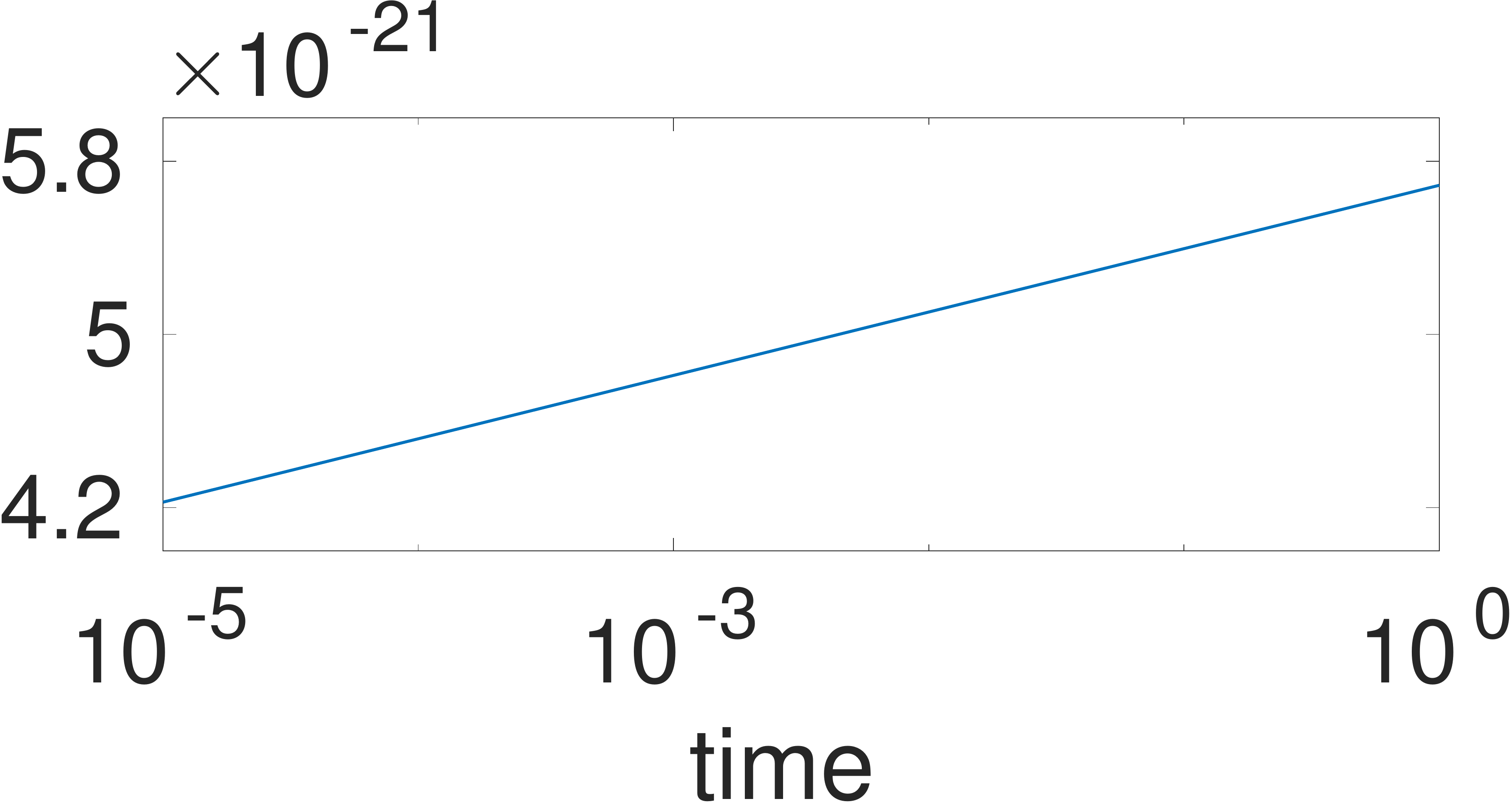}
	\end{minipage}
	\hspace{.5cm}
	\begin{minipage}[c]{.28\textwidth}
		\centering
		\includegraphics[width=\linewidth]{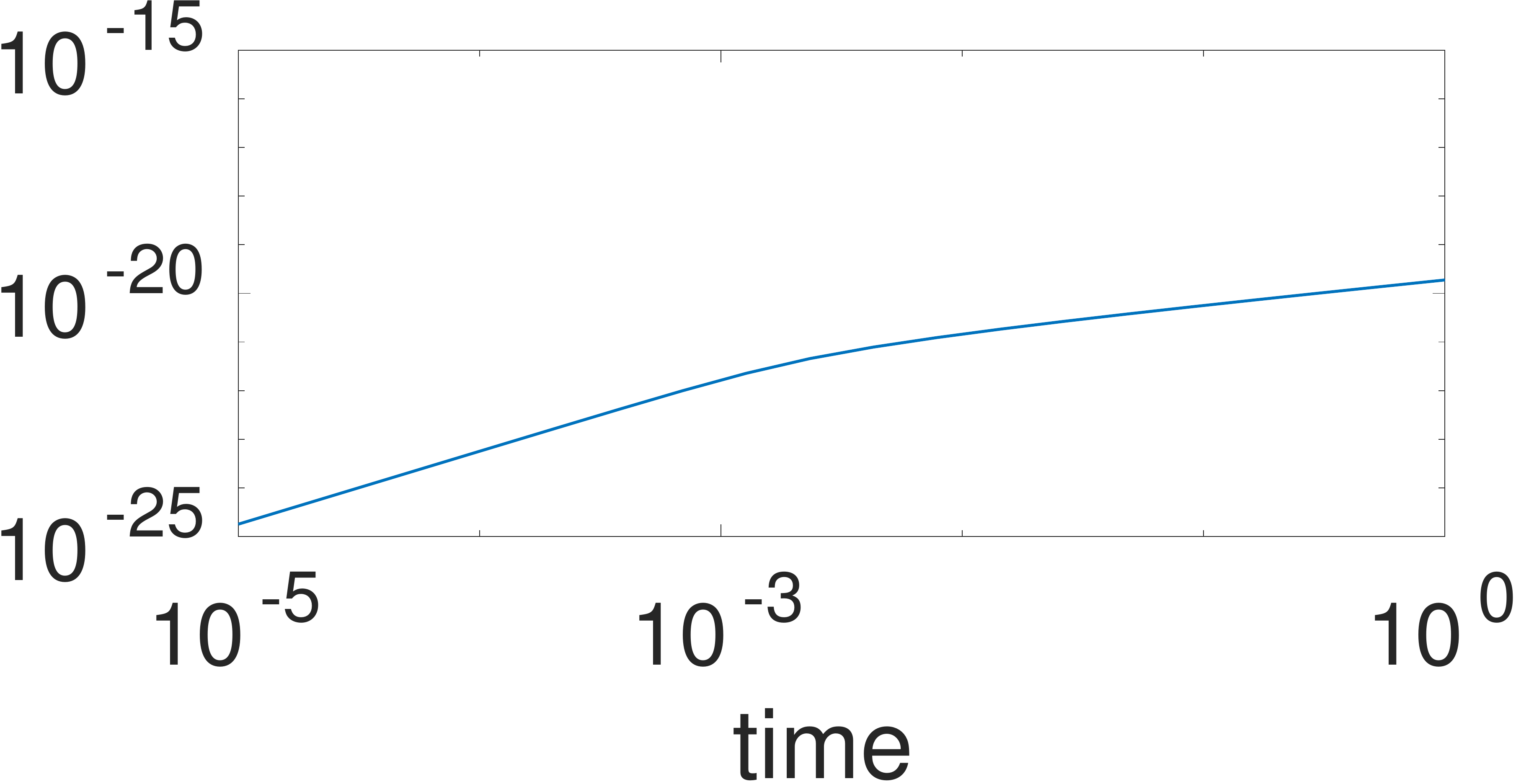}
	\end{minipage}
	\caption{Time-logarithmic plots showing the total heat on the boundary for each component of the heat. The values were obtained for each fixed time, by integrating over the boundary the computed heat. From left- to right-hand side: The total heat, the zeroth-order and its first-order, according to formula given by Theorem \ref{thm-Heat small volume heat}. Notice that the first-order term is plotted in a log-log scale.}
	\label{fig:1pHeat3}
\end{figure}

We can observe that the heat is not symmetric, this can be noticed from the total inner field for the first resonance mode in Figure \ref{fig:1pHelmResonance}. The reason behind this non symmetry is because we are illuminating with direction $d = (1,1)^t/\sqrt{2}$ over an ellipse. From Figure \ref{fig:1pHeat3} we can notice that  the first-order term converges, while the zeroth-order term increases logarithmically, as it is expected from the known solution of the heat equation for constant source in two dimensions that the heat increases logarithmically. 

\subsection{ Two particles simulation }

We consider two elliptical nanoparticles $D_1$, $D_2$, $D = D_1 \cup D_2$, with the same shape and orientation as the nanoparticle considered in the above example. The particle $D_1$ is centered at the origin and $D_2$ is centered at $(0,4.1\cdot 10^{-9})$,  resulting in a separation distance of $0.1nm$ between the two particles.

\subsubsection{ Two particles Helmholtz resonance }

Following the same analysis as the one for one particle, in Figure \ref{fig:2pResonance} we present the inner product between the eigenvectors of $\mathcal{K}_D^*$ with each component of the normal of $D$. We can observe that there are more available resonant modes. In particular we can see that when $\lambda_\epsilon$ approaches the 36th or 37th eigenvalues, we achieve strong resonant modes.

\begin{figure}[h!]
	\centering
	\begin{minipage}[c]{.6\textwidth}
		\centering
		\includegraphics[width=\linewidth]{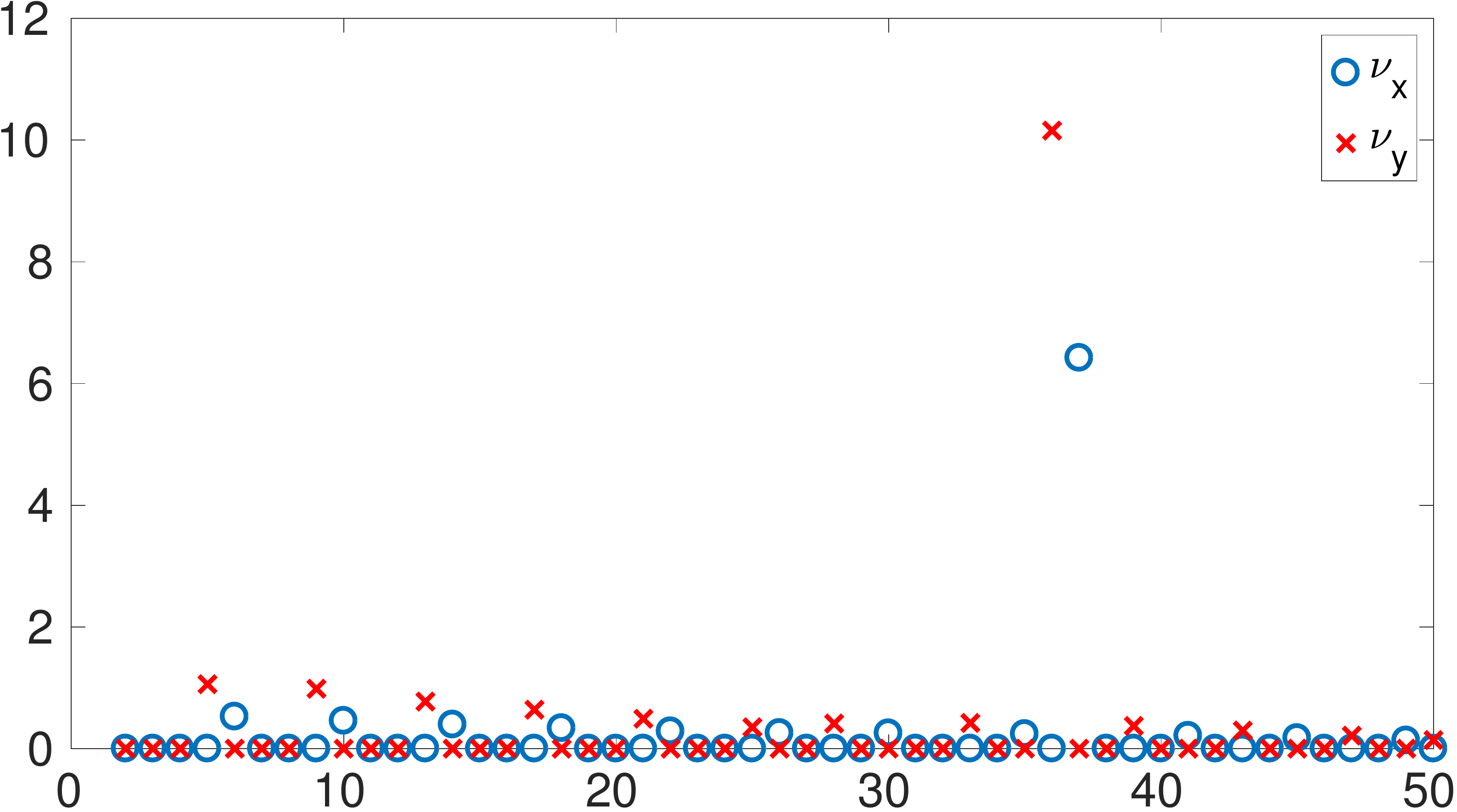}
	\end{minipage}
	\caption{inner product in $\mathcal{H}^*{\p D}$ between the eigenvalues of $\mathcal{K}_D^*$ and each component of the normal of $\partial D$, $\nu_x$ and $\nu_y$. }
	\label{fig:2pResonance}
\end{figure}

In Figure \ref{fig:1pHelmResonance} we present the absolute value of the inner field for the resonant modes corresponding to the 6th, 37th and 38th eigenvalues of $\mathcal{K}_D^*$. Similarly to the case with one particle, the dominant term in the electromagnetic field for each case is the first-order term. In Figure \ref{fig:2pHelmResonance2} we decompose the first-order term in its $x$-component and $y$-component.

\begin{figure}[h!]
	\centering
	\begin{minipage}[c]{.28\textwidth}
		\centering
		\scriptsize Resonant mode associated to the 6th eigenvalue of $\mathcal{K}_D^*$ \ \ \
	\end{minipage}
	\hspace{.5cm}
	\begin{minipage}[c]{.28\textwidth}
		\centering
		\scriptsize Resonant mode associated to the 37th eigenvalue of $\mathcal{K}_D^*$ \ \ \
	\end{minipage}
	\hspace{.5cm}
	\begin{minipage}[c]{.28\textwidth}
		\centering
		\scriptsize Resonant mode associated to the 38th eigenvalue of $\mathcal{K}_D^*$ \ \ \
	\end{minipage}
	\\
	\begin{minipage}[c]{.28\textwidth}
		\centering
		\includegraphics[width=\linewidth]{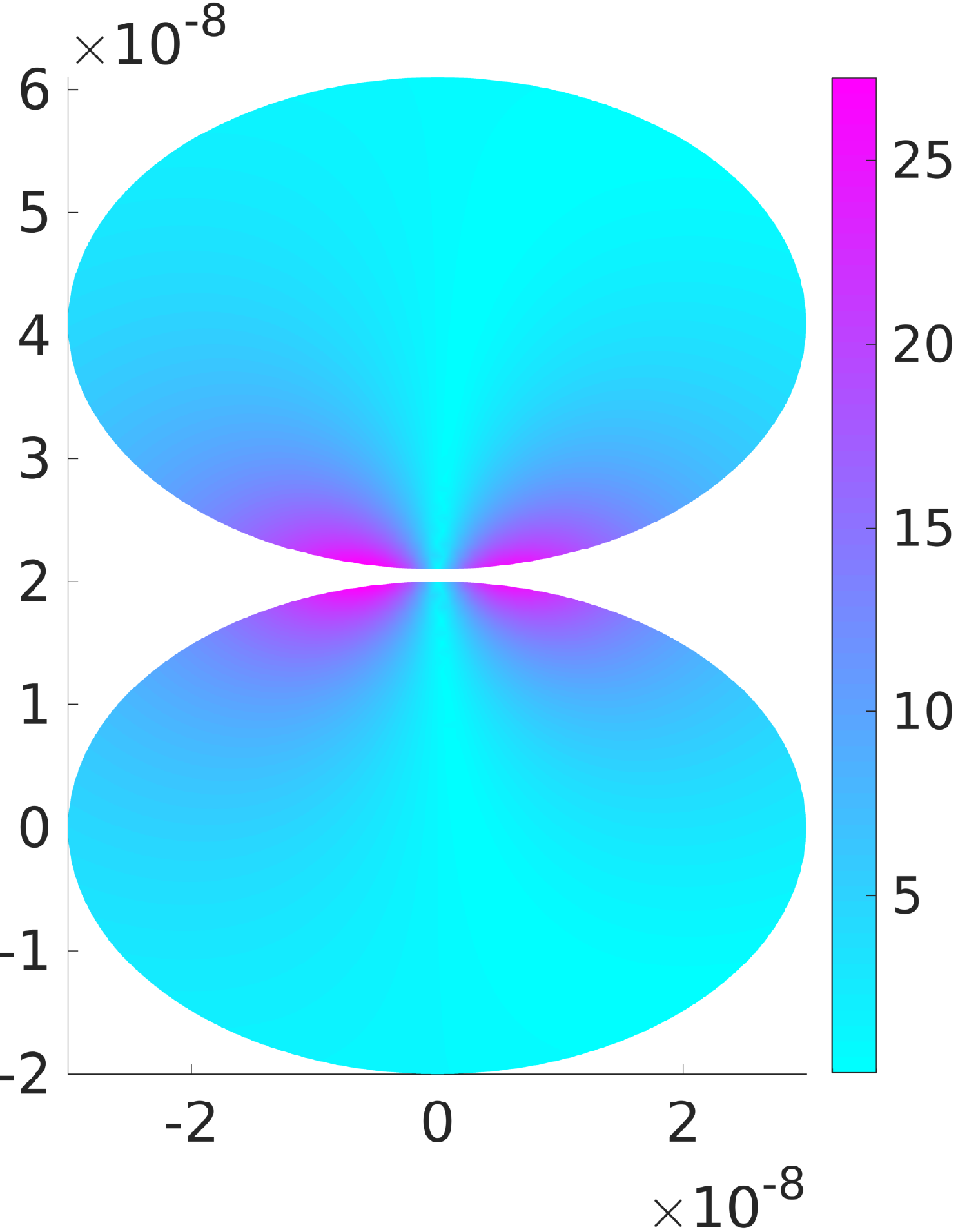}
	\end{minipage}
	\hspace{.5cm}
	\begin{minipage}[c]{.28\textwidth}
		\centering
		\includegraphics[width=\linewidth]{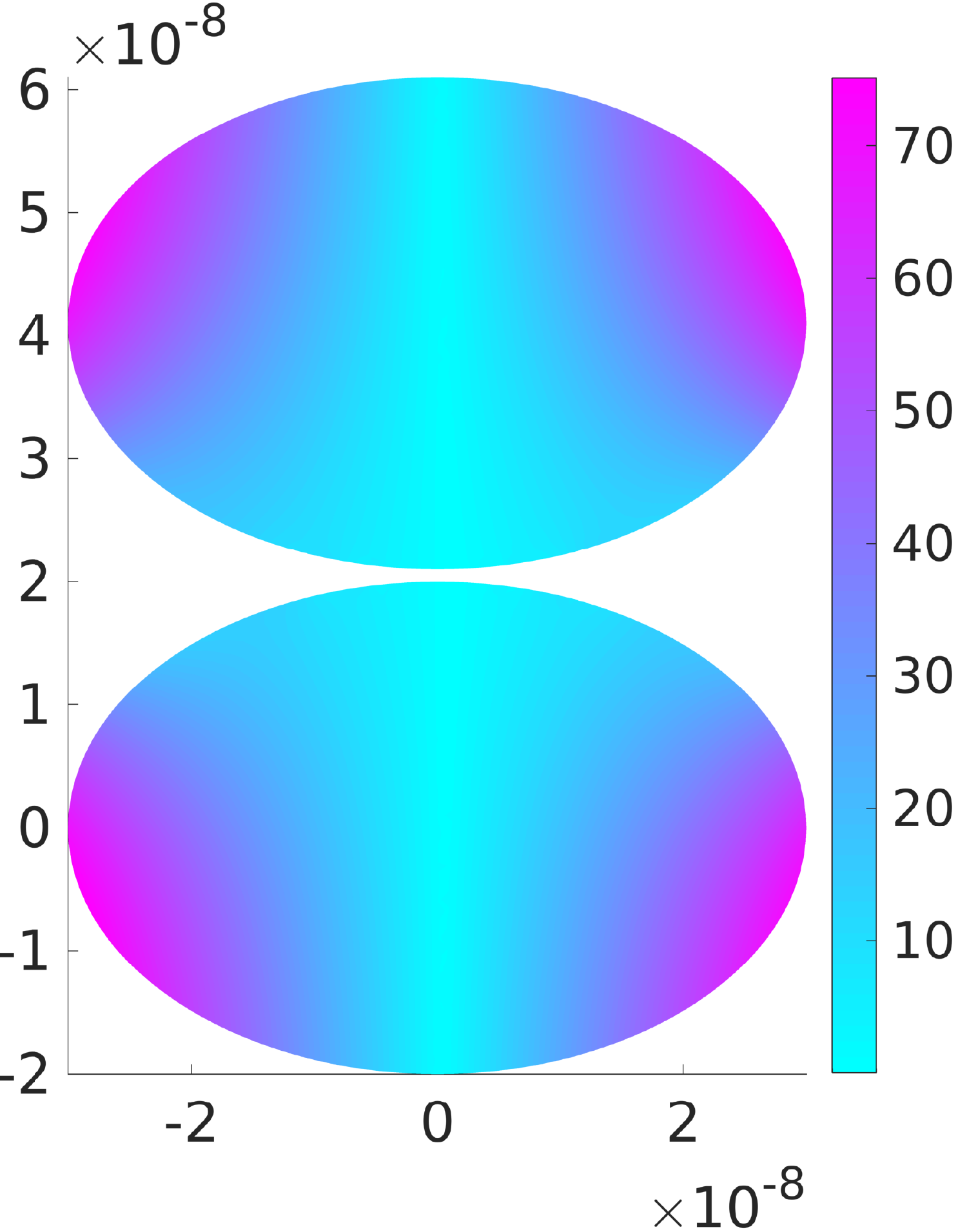}
	\end{minipage}
	\hspace{.5cm}
	\begin{minipage}[c]{.28\textwidth}
		\centering
		\includegraphics[width=\linewidth]{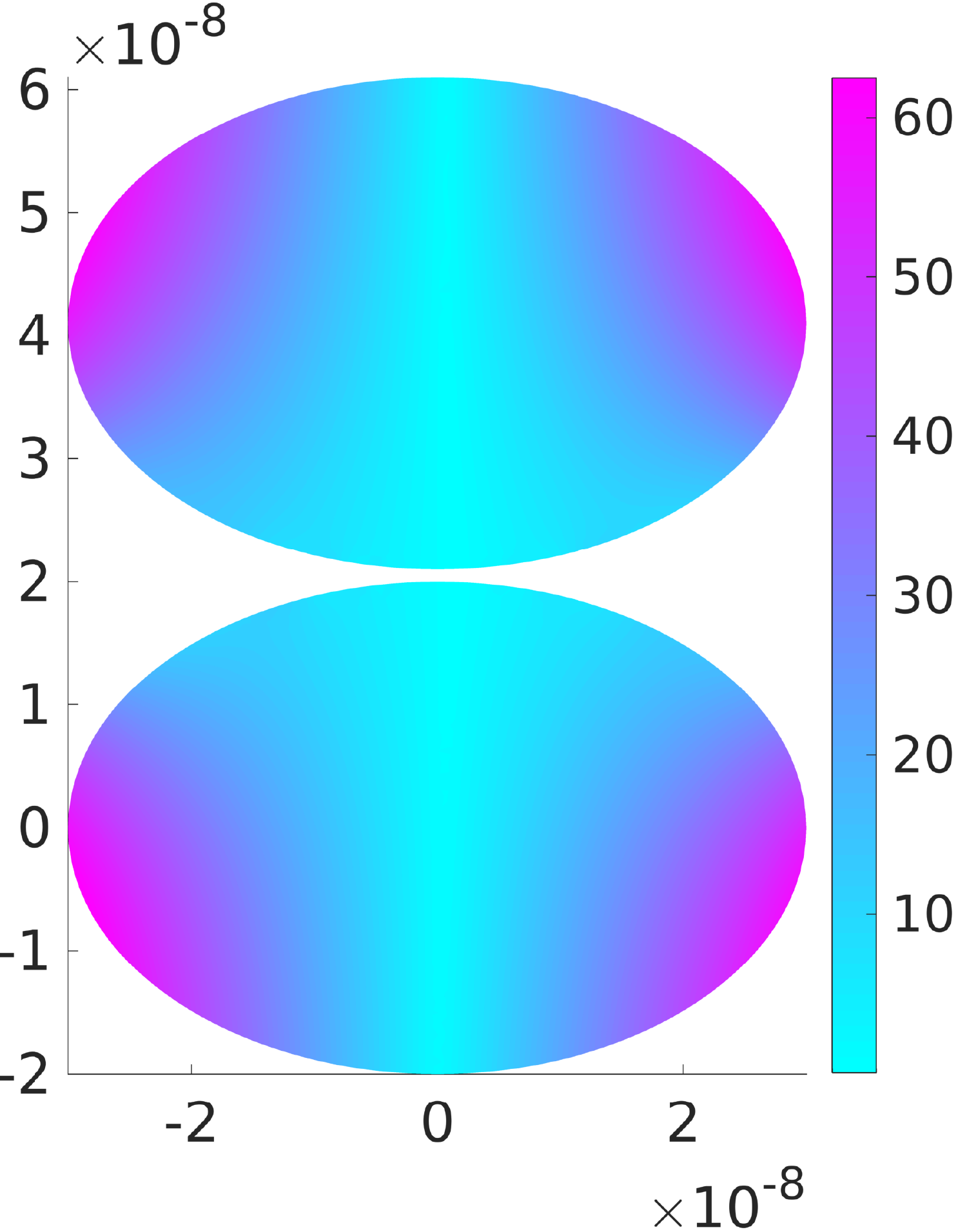}
	\end{minipage}
	\caption{Absolute value of the electromagnetic field inside the nanoparticle at the resonant modes associated to the 6th, 37th and 38th resonant modes, obtained when $\lambda_\epsilon$ approaches the respective eigenvalues of $\mathcal{K}_D^*$.}
	\label{fig:2pHelmResonance}
\end{figure}

\begin{figure}[h!]
	\centering
	\begin{minipage}[c]{.28\textwidth}
		\centering
		\scriptsize The $x$-component  \ \ \ \ \ \ \ \ \
	\end{minipage}
	\hspace{.5cm}
	\begin{minipage}[c]{.28\textwidth}
		\centering
		\scriptsize The $y$-component \ \ \ \ \ \ \ \ \
	\end{minipage}
	\hspace{.5cm}
	\\
	\begin{minipage}[c]{.28\textwidth}
		\centering
		\includegraphics[width=\linewidth]{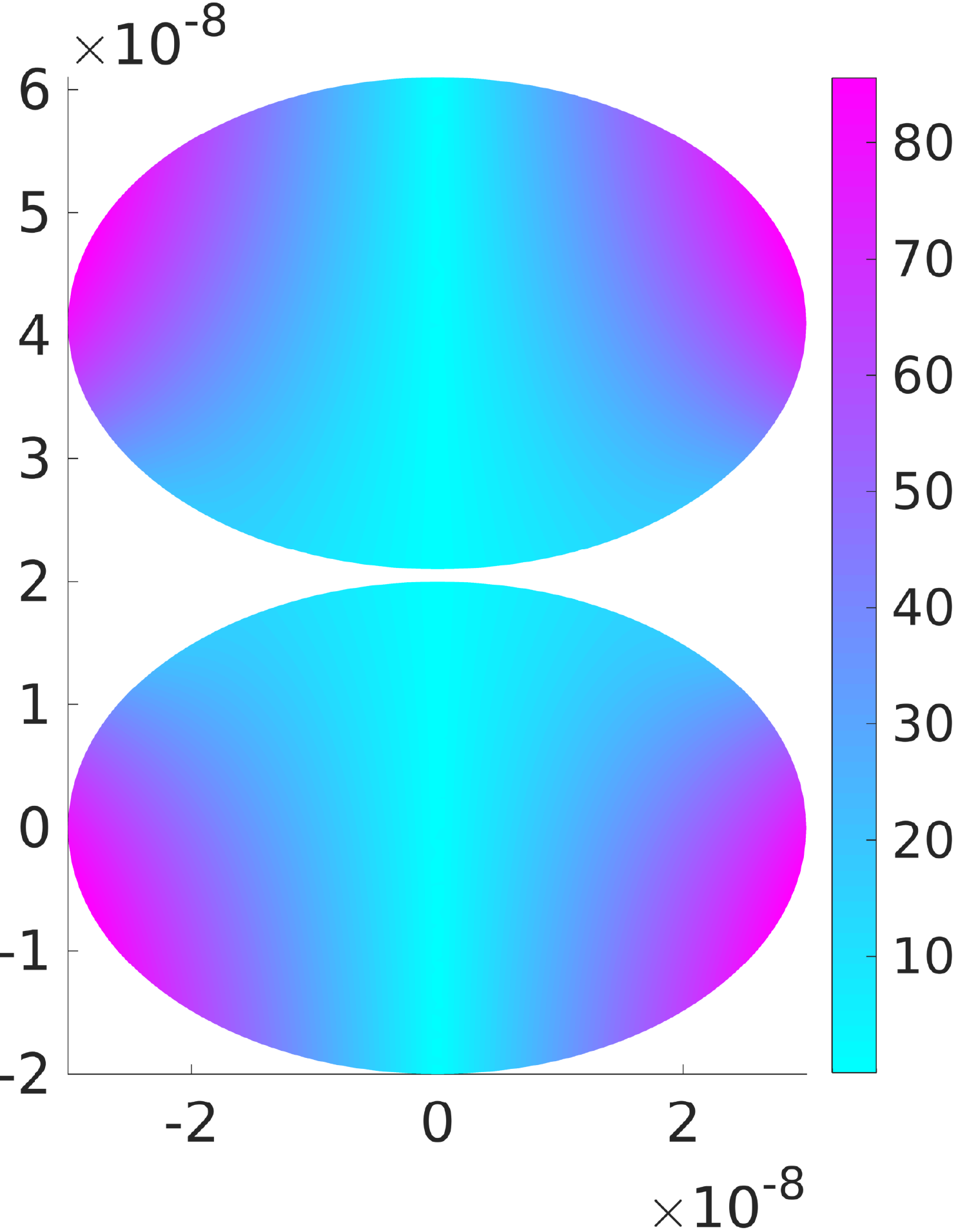}
	\end{minipage}
	\hspace{.5cm}
	\begin{minipage}[c]{.28\textwidth}
		\centering
		\includegraphics[width=\linewidth]{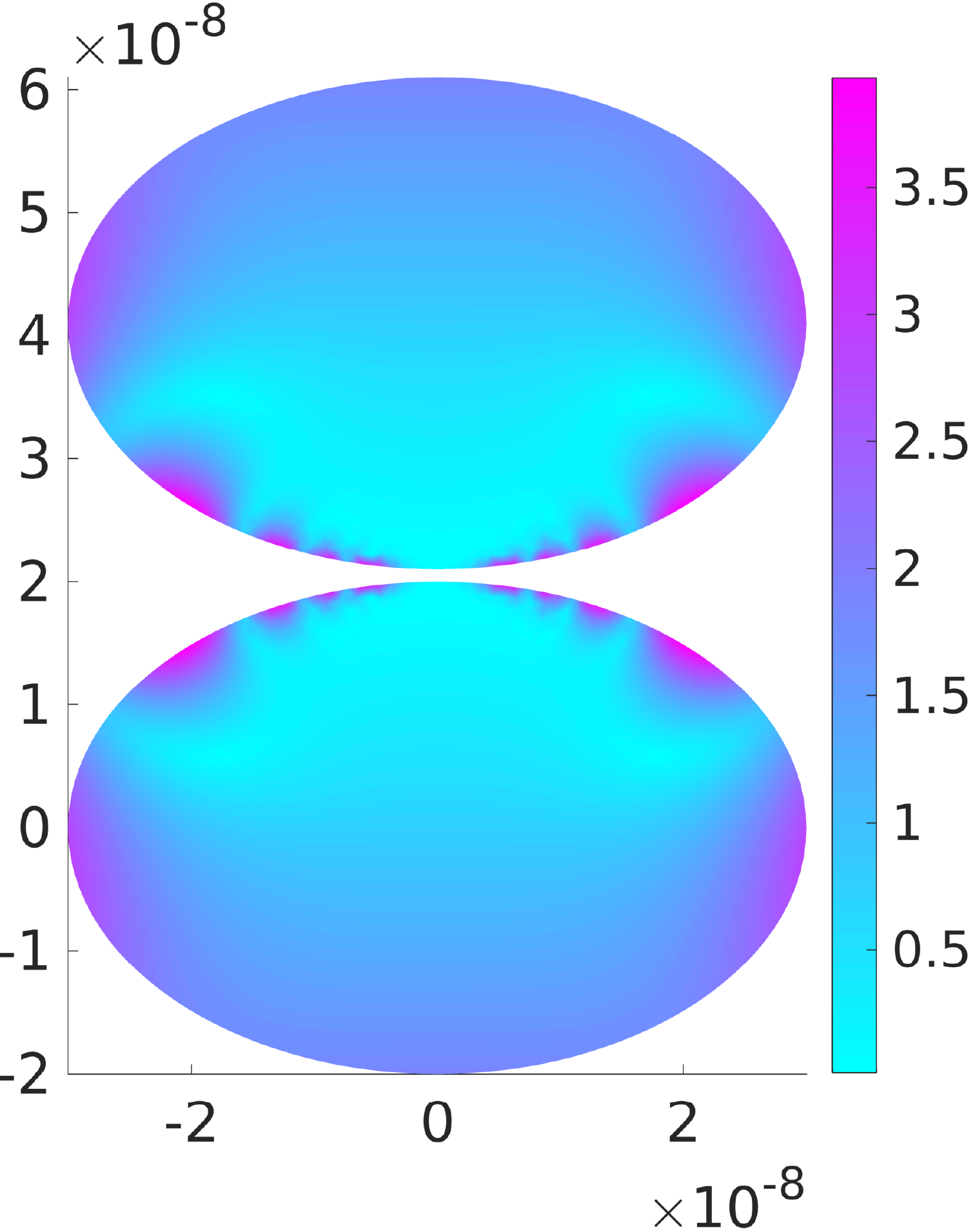}
	\end{minipage}
	\hspace{.5cm}
	\caption{Absolute value of the vectorial components of the first-order term for the 38th resonant mode.}
	\label{fig:2pHelmResonance2}
\end{figure}

As suggested by Figure \ref{fig:2pResonance}, for the resonant mode associated to the 38th eigenvector of $\mathcal{K}_D^*$, the stronger component is the one on the y direction, meaning that if we wish to maximize the electromagnetic field, and therefore the generated heat, it is suggested to consider the illumination vector $d = (0,1)^t$.

\subsubsection{ Two particles surface heat generation }

Similarly to the analysis carried out for one particle, we now compute the generated heat for these two particles while undergoing resonance for the resonant mode associated to the 38th eigenvalue of $\mathcal{K}_D^*$. In Figure \ref{fig:2pHeat} we plot generated heat in the boundary of the two nanoparticles. In Figure \ref{fig:2pHeat2} we decompose the generated heat in its zeroth and first-order component, explicited for each of the two nanoparticles. 

Similarly to the single nanoparticle case, there is no symmetry on the heat values on the boundary, which is due to the illumination. We have not provided the plots of the heat integrated along the boundary, as the conclusions are the same as the ones in the single nanoparticle case: The total heat on the boundary increases logarithmically, initially on time the dominant term is the fist-order one, but as time increases the zeroth-order term becomes the predominant one.

\begin{figure}[h!]
	\centering
	\begin{minipage}[c]{.45\textwidth}
					\begin{minipage}[c]{\textwidth}
						\centering \scriptsize
						3D plot of heat on $\partial (D_1 \cup D_2)$ at time $T=1$.
					\end{minipage}
				\begin{minipage}[c]{\textwidth}
				\centering
				\includegraphics[width=\linewidth]{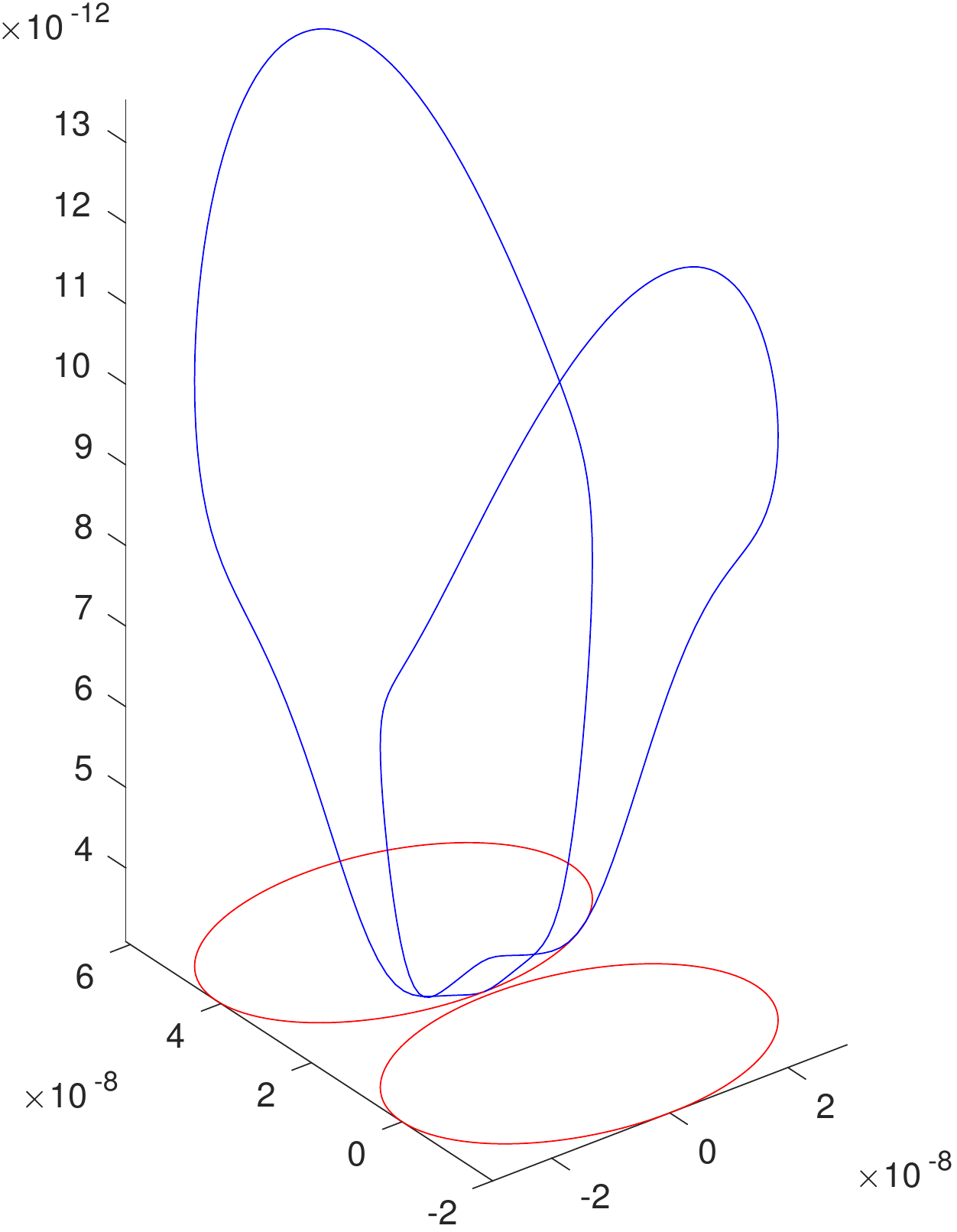}
			\end{minipage}
	\end{minipage}
	\hspace{.5cm}
	\begin{minipage}[c]{.45\textwidth}
			\begin{minipage}[c]{\textwidth}
				\centering \scriptsize
				Heat on $\partial D_1$ at time $T=1$
			\end{minipage}
			\begin{minipage}[c]{\textwidth}
						\centering
						\includegraphics[width=\linewidth]{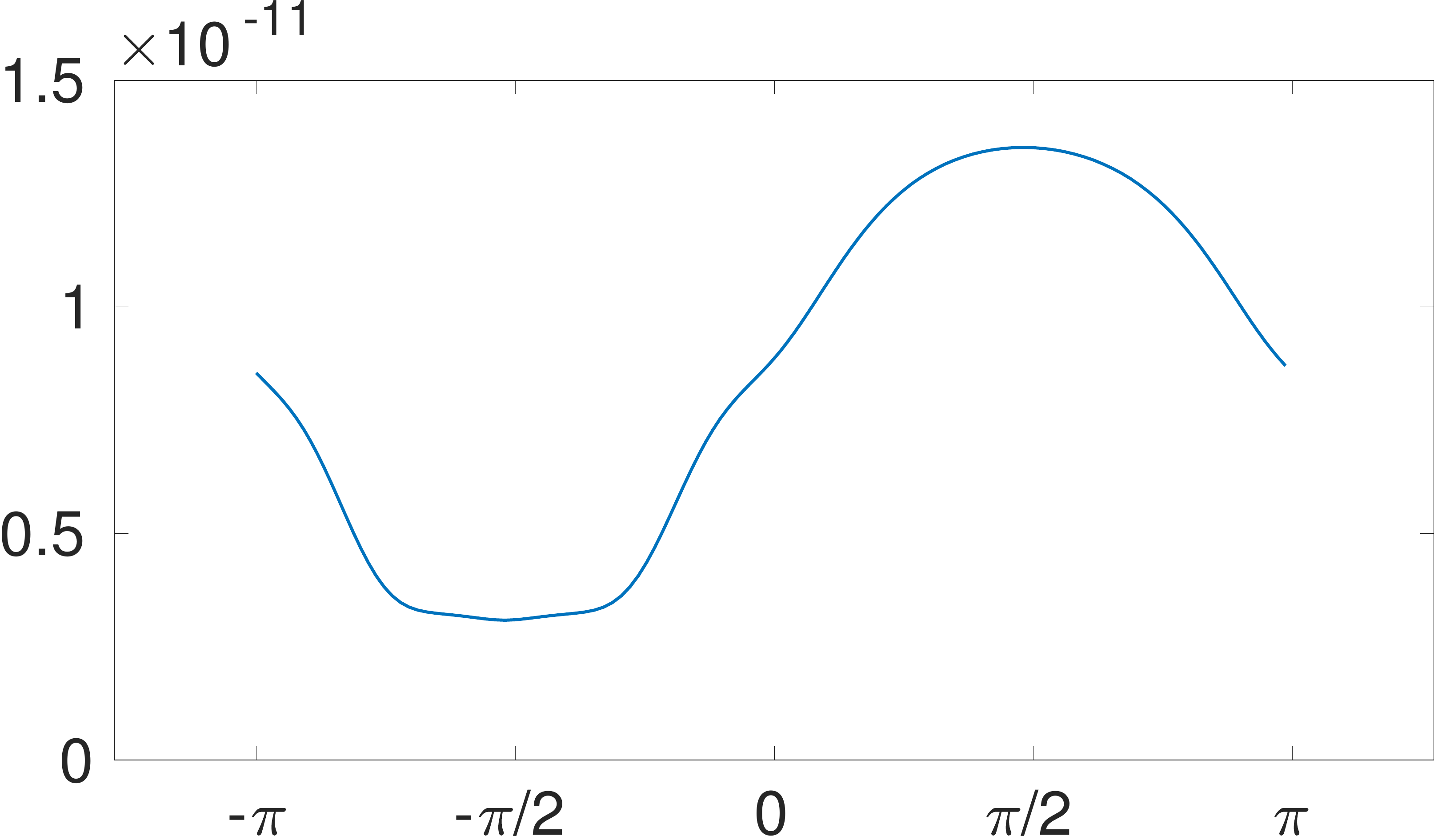}
			\end{minipage}
			\vspace{0.3cm}
			\\
			\begin{minipage}[c]{\textwidth}
				\centering \scriptsize
				Heat on $\partial D_2$ at time $T=1$
			\end{minipage}
			\\
			\begin{minipage}[c]{\textwidth}
				\centering
				\includegraphics[width=\linewidth]{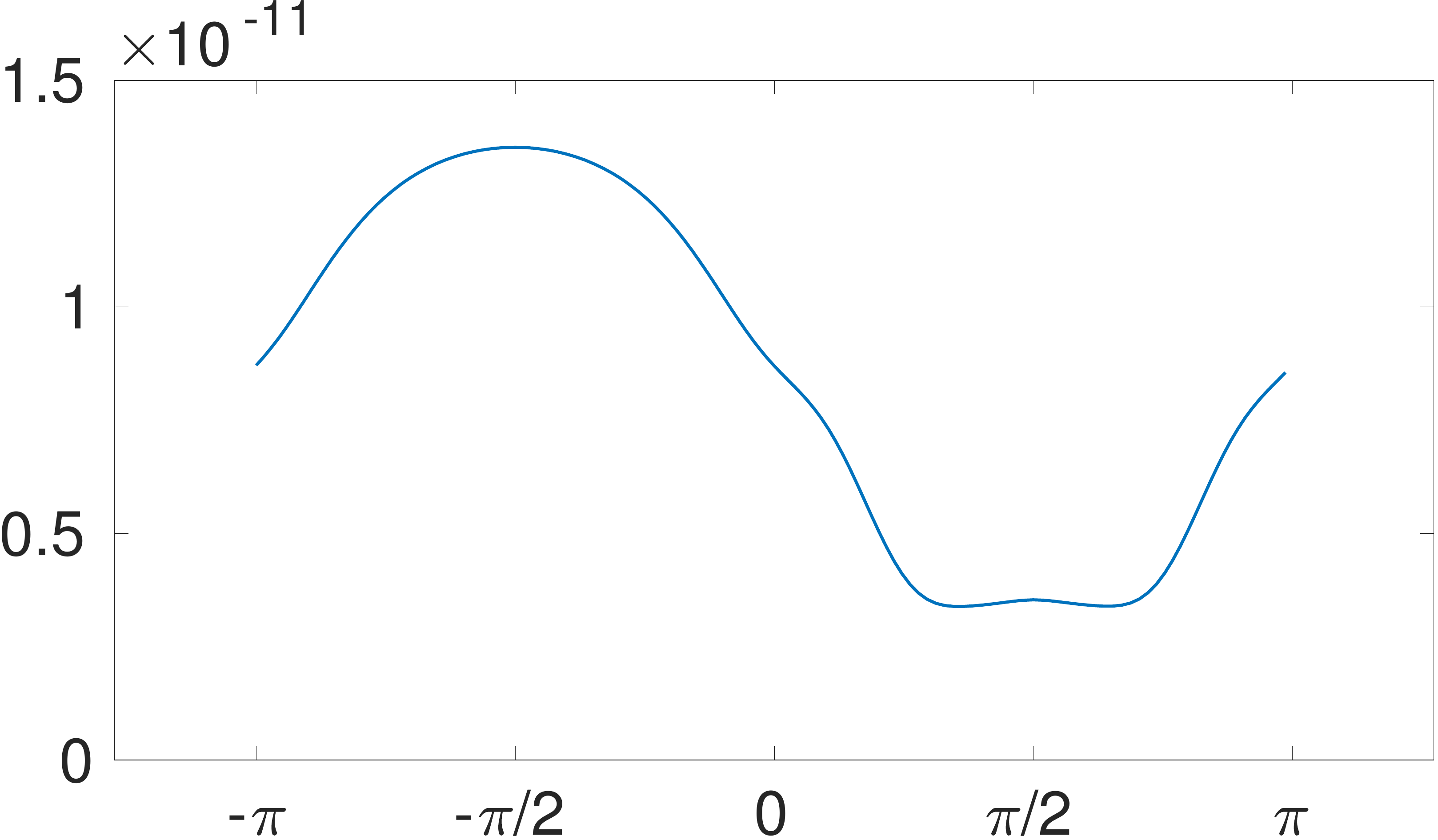}
			\end{minipage}
	\end{minipage}
	\caption{Generated heat on the boundary of the nanoparticles for time equal to 1. On the left we can see a three dimensional view of the heat, the red shapes are referential to show the location of the nanoparticles. On the right-hand side we can see the two dimensional heat plots corresponding to each nanoparticle. To obtain these plots we parameterized the boundary of each nanoparticle with $p(\theta) = (a\cos(\theta),b\sin(\theta)) + z, \ \theta \in [-\pi, \pi]$, where $z \in \mathbb{R}^2$ corresponds to the center of each nanoparticle. On the top we can see nanoparticle $D_2$ and on the bottom nanoparticle $D_1$.}
	\label{fig:2pHeat}
\end{figure}

\begin{figure}[h!]
	\centering
	\begin{minipage}[c]{.45\textwidth}
		\begin{minipage}[c]{\textwidth}
			\centering \scriptsize
			Heat zeroth component on $\partial D_2$
		\end{minipage}
		\begin{minipage}[c]{\textwidth}
			\centering 
			\includegraphics[width=\linewidth]{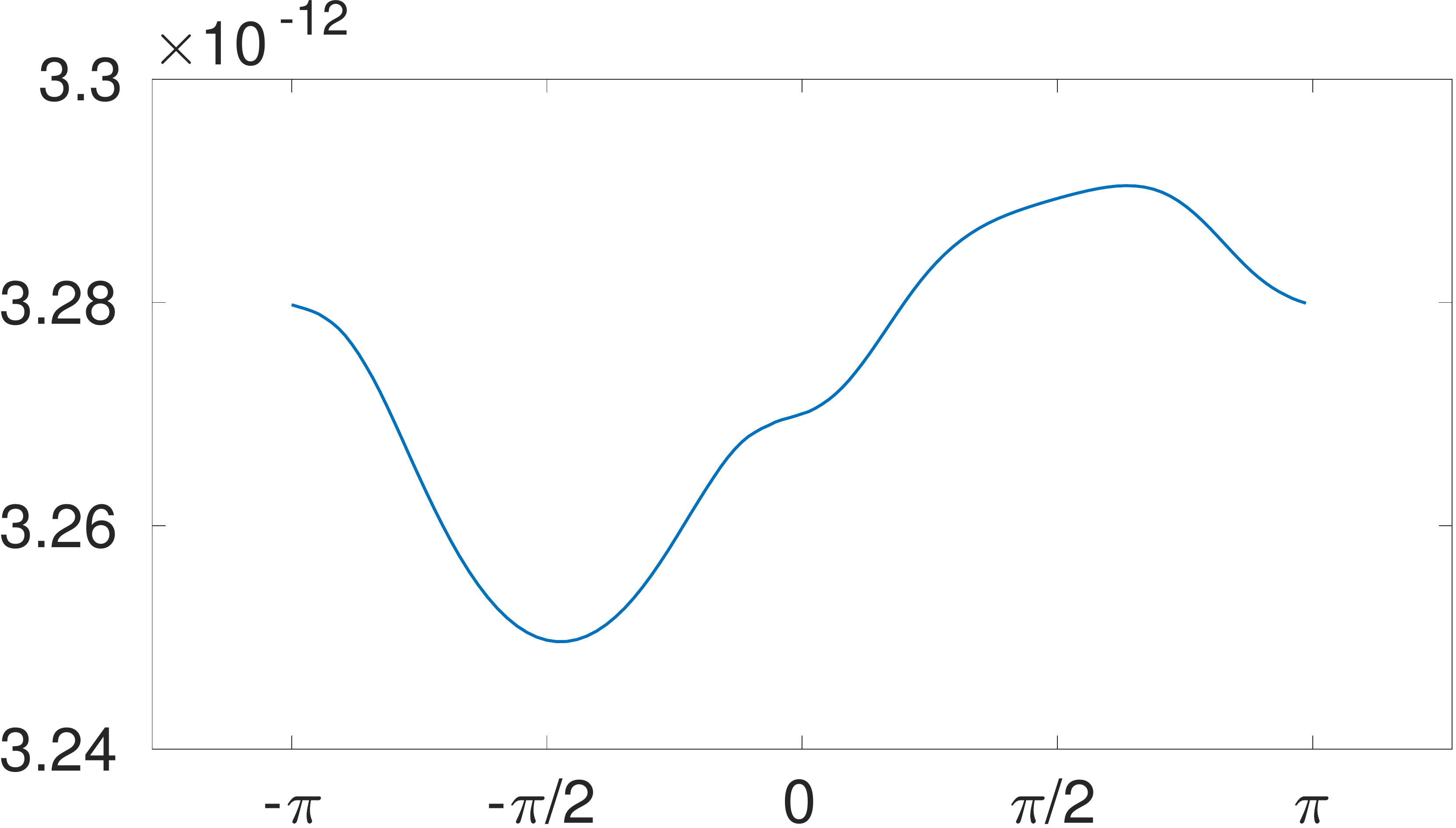}
		\end{minipage}
		\vspace{0.3cm}
		\\
		\begin{minipage}[c]{\textwidth}
			\centering \scriptsize
			Heat zeroth component on $\partial D_1$ at time $T=1$
		\end{minipage}
		\\
		\begin{minipage}[c]{\textwidth}
			\centering 
			\includegraphics[width=\linewidth]{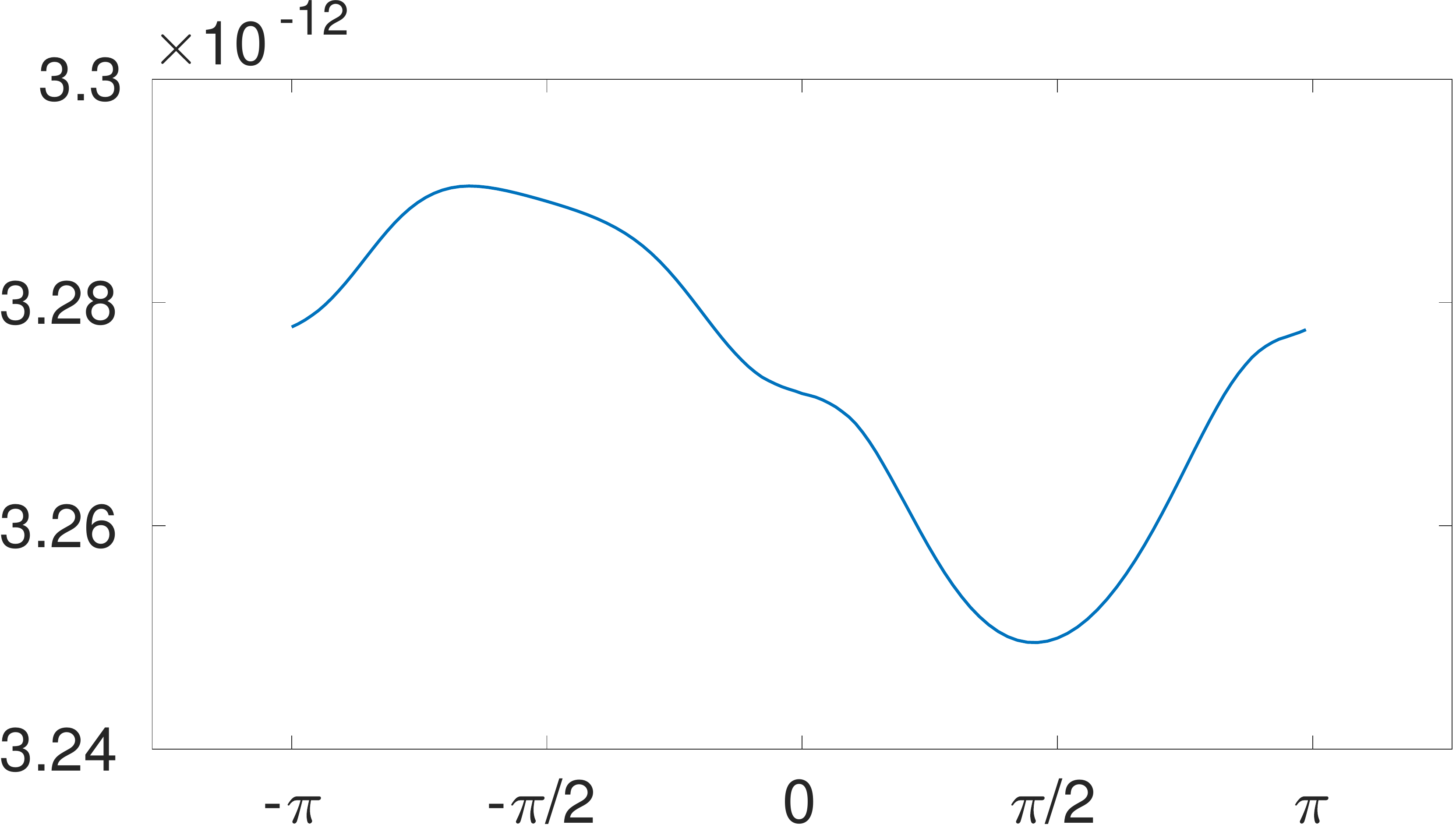}
		\end{minipage}
	\end{minipage}
	\hspace{.5cm}
	\begin{minipage}[c]{.45\textwidth}
		\begin{minipage}[c]{\textwidth}
			\centering \scriptsize
			Heat first component on $\partial D_2$ at time $T=1$
		\end{minipage} 
		\begin{minipage}[c]{\textwidth}
			\centering
			\includegraphics[width=\linewidth]{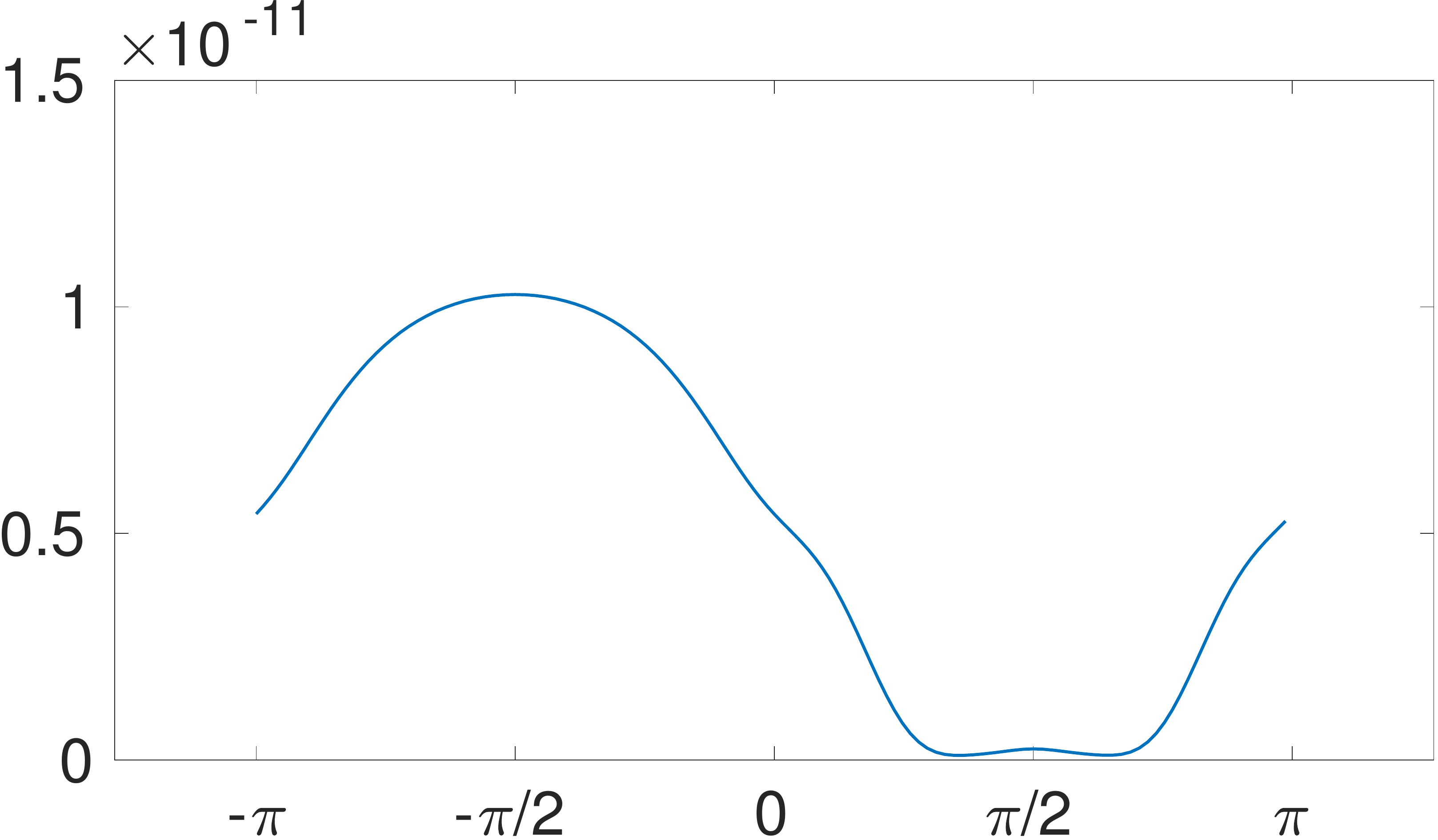}
		\end{minipage}
		\vspace{0.3cm}
		\\
		\begin{minipage}[c]{\textwidth}
			\centering \scriptsize
			Heat first component on $\partial D_1$ at time $T=1$
		\end{minipage}
		\\
		\begin{minipage}[c]{\textwidth}
			\centering
			\includegraphics[width=\linewidth]{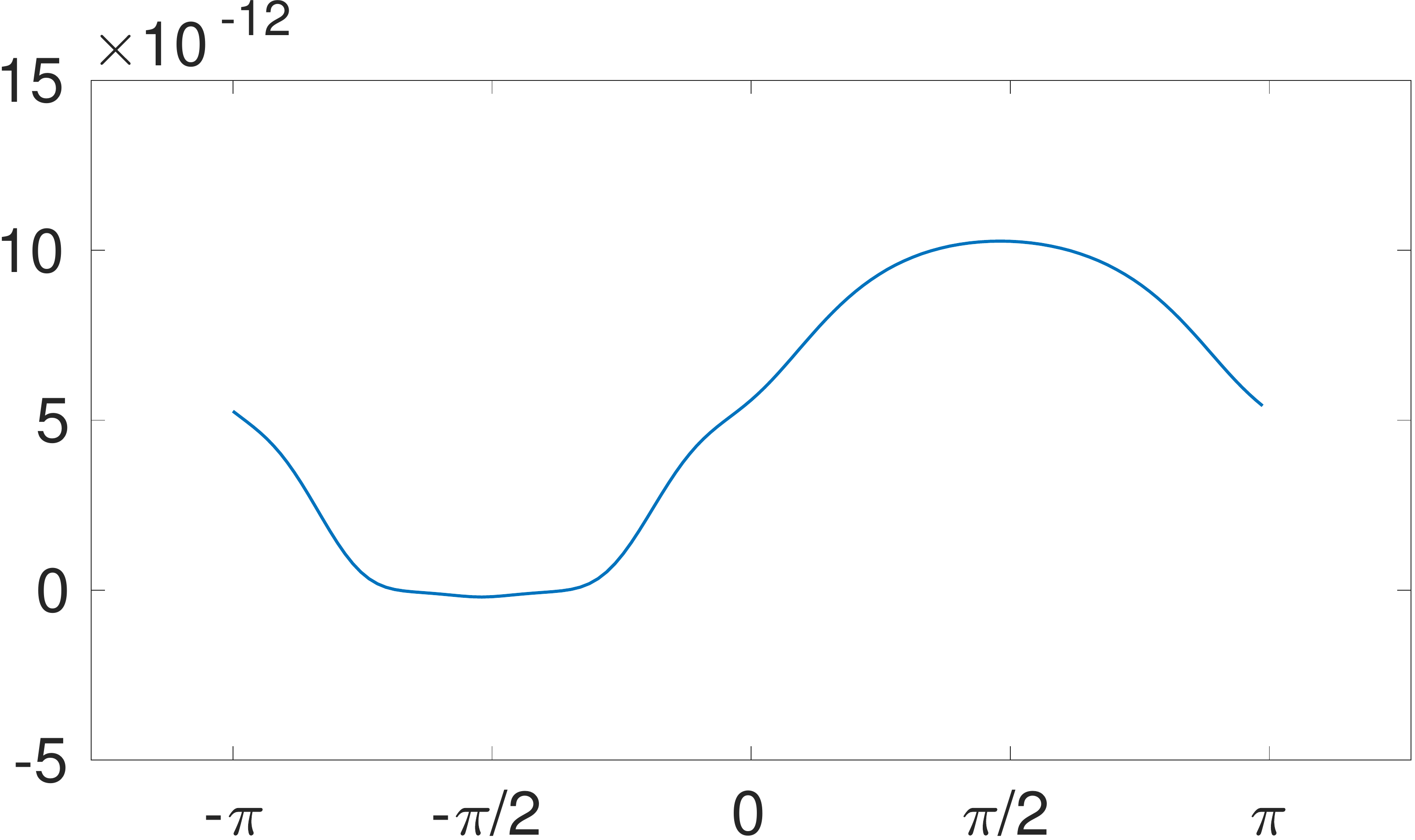}
		\end{minipage}
	\end{minipage}
	\caption{Two-dimensional plots of the zeroth and first component of the heat at time 1, for each nanoparticle. On the left column we have the zeroth component of the heat, on the right-hand side column we have the first component of the heat. On top we show the values for nanoparticle $D_2$, on the bottom we show the values for nanoparticle $D_1$.}
	\label{fig:2pHeat2}
\end{figure}

\section{Concluding remarks}

In this paper we have derived asymptotic formula for the temperature elevation due to plasmonic nanoparticles. We have considered thermal coupling within close-to-touching nanoparticles, where the temperature field deviates significantly from the one generated by a single nanoparticle. 
Our results can be used for the thermal detection and localization of the nanoparticles \cite{boccara}.
They can also be used for monitoring temperature elevation due to plasmonic nanoparticles based on the photoacoustic signal recently analyzed in \cite{triki2}. Thermoacoustic signals generated by nanoparticle heating can be computed numerically based on the successive resolution of the thermal diffusion problem considered in this paper and a thermoelastic problem, taking into account the size and shape of the nanoparticle, thermoelastic and elastic properties of both the particle and its environment, and the temperature-dependence of the thermal expansion coefficient of the environment. For sufficiently high illumination fluences, this temperature dependence yields a nonlinear relationship between the photoacoustic amplitude and the fluence \cite{prost}. The investigation of this nonlinear model will be the subject of a forthcoming publication.

\appendix
\section{Asymptotic analysis of the single-layer potential in two dimensions} \label{append1 heat}
In this section we make an analysis of the single-layer potential $\mathcal{S}_D^{ k}$ for small values of $ k $, i.e $| k|\ll 1$. We use this, in section \ref{sec-field expansions heat} , to make and expansion on $\delta$ of the operator $\mathcal{A}_B(\delta)$ and its inverse. 

The results in this section were first established in \cite{kang1} for a connected domain $D$. Here we generalize them to non connected domains.

\subsection{Layer potentials for the Laplacian in two dimensions}
Recall the definition of the single-layer potential and Neumann-Poincar\'e operators for the Laplacian:
\beas
\mathcal{S}_{D} [\varphi](x) &=&  \int_{\p D} \df{1}{2\pi}\log|x-y| \varphi(y) d\sigma(y),  \quad x \in  \p {D},\\
\mathcal{K}_{D}^* [\varphi](x) &=&  \int_{\p D}\df{1}{2\pi} \df{(x-y,\nu(x))}{|x-y|} \varphi(y) d\sigma(y),  \quad x \in  \p {D}.
\eeas 

In $\mathbb{R}^2$ the single-layer potential $\mathcal{S}_D:H^{-1/2}(\partial D)\rightarrow H^{1/2}(\partial D)$ is not, in general, invertible. Hence, $-( u, \mathcal{S}_D[v])_{-\f{1}{2},\f{1}{2}}$ does not define an inner product and the symmetrization technique described in \cite[subsection 2.1.4]{book3} is no longer valid.

Here and throughout, $(\cdot, \cdot)_{-\f{1}{2},\f{1}{2}}$ denotes the duality pairing between $H^{-1/2}(\p D)$ and  $H^{1/2}(\p D)$. 

To overcome this difficulty, we will introduce a substitute of $\mathcal{S}_D$, in the same way as in \cite{kang1}.

We first need the following lemma.

\begin{lem} \label{lem-dim Ker S heat}
Let $\mathcal{C} = \{\varphi \in  H^{-1/2}(\partial D) ; \; \exists \; \alpha \in \mathbb{C}, \; \mathcal{S}_D[\varphi] = \alpha\}$. We have $\textnormal{dim}(\mathcal{C})=1$.
\end{lem}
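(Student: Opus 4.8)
The statement concerns the kernel-like space $\mathcal{C}$ of densities $\varphi\in H^{-1/2}(\p D)$ for which $\mathcal{S}_D[\varphi]$ is constant on $\p D$; for a possibly disconnected $D$ with components $D_1,\dots,D_L$ this should in fact be a space of dimension $L$ in many treatments, but here the claim is $\dim(\mathcal C)=1$, so presumably $\mathcal C$ requires the \emph{same} constant $\alpha$ on all of $\p D$. The plan is to proceed in two steps: first exhibit a nonzero element of $\mathcal C$, then show any element is a scalar multiple of it. For the existence part, I would recall the classical fact (Verchota, and as used in \cite{book3,kang1}) that there is a unique ``equilibrium density'' $\varphi_e\in H^{-1/2}(\p D)$ with $(\varphi_e,1)_{-\f12,\f12}=1$ such that $\mathcal S_D[\varphi_e]$ is constant on each connected component; in two dimensions a single-layer potential with a logarithmic kernel and total charge $1$ behaves like $\f{1}{2\pi}\log|x|$ at infinity, and the extra degrees of freedom coming from multiple components are pinned down precisely by requiring the potential to be one and the same constant everywhere. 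Concretely I would take $\varphi_0$ to be the density with $\mathcal S_D[\varphi_0]\equiv\text{const}$ and $\int_{\p D}\varphi_0\,d\sigma\neq0$; its existence follows from the invertibility results in the appendix (the operator $(\f12 I-\mathcal K_D^*)$ has the equilibrium densities in its cokernel, and a dimension count gives one normalized representative once we impose a single global constant).

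For uniqueness, suppose $\varphi\in\mathcal C$, so $\mathcal S_D[\varphi]=\alpha$ on $\p D$ for some $\alpha\in\mathbb C$. Consider $u:=\mathcal S_D[\varphi]$ as a function on all of $\R^2$: it is harmonic in $\R^2\setminus\p D$, continuous across $\p D$, equal to the constant $\alpha$ on $\p D$, and has the far-field behaviour $u(x)=\f{1}{2\pi}\big(\int_{\p D}\varphi\big)\log|x|+O(1)$ as $|x|\to\infty$. Inside $D$, $u$ is harmonic with boundary value $\alpha$, hence $u\equiv\alpha$ in $D$ by the maximum principle, so $\f{\p u}{\p\nu}\big|_-=0$ on $\p D$. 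By the jump relation (property (v) of the layer potentials, in the Laplace limit $k=0$), $\varphi=\f{\p u}{\p\nu}\big|_+-\f{\p u}{\p\nu}\big|_-=\f{\p u}{\p\nu}\big|_+$. Now let $m:=\int_{\p D}\varphi\,d\sigma$. If $m=0$, then $u$ is bounded at infinity and harmonic in $\R^2\setminus\overline D$ with constant boundary data $\alpha$; since it is bounded it tends to a limit at infinity, and by the maximum principle for the exterior domain $u\equiv\alpha$ outside as well, whence $\f{\p u}{\p\nu}\big|_+=0$ and $\varphi=0$. Therefore any $\varphi\in\mathcal C$ with zero mean is zero, so the linear functional $\varphi\mapsto\int_{\p D}\varphi\,d\sigma$ is injective on $\mathcal C$; combined with the existence of $\varphi_0\in\mathcal C$ with $\int\varphi_0\neq0$, this forces $\dim(\mathcal C)=1$, with $\mathcal C=\mathbb C\,\varphi_0$.

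The step I expect to be the main obstacle is the exterior argument when $m\neq0$: one must make sure that ``$\mathcal S_D[\varphi]$ constant on $\p D$'' together with the precise two-dimensional growth $\f{m}{2\pi}\log|x|+O(1)$ is genuinely consistent, i.e.\ that such a $\varphi$ actually exists, and that this determines $\varphi$ up to scaling. The clean way to handle this, rather than juggling growth conditions by hand, is to invoke the structure already set up in Appendix \ref{append1 heat}: the decomposition there shows $\mathcal S_D$ is a compact perturbation of an invertible operator modulo the one-dimensional obstruction carried by the equilibrium density, which is exactly $\mathcal C$. I would also need to be slightly careful that the constant $\alpha$ in the definition is allowed to be any complex number but is required to be a single constant over all of $\p D$ (not component-dependent); this is what distinguishes $\dim=1$ from $\dim=L$, and I would state it explicitly. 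Modulo these points, the maximum-principle argument above is routine and gives the result.
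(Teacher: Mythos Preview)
Your route is genuinely different from the paper's. The paper does not touch potential theory at all: it invokes the known invertibility of the augmented operator
\[
\mathcal A_D:H^{-1/2}(\p D)\times\mathbb C\to H^{1/2}(\p D)\times\mathbb C,\qquad (\varphi,a)\mapsto\Big(\mathcal S_D[\varphi]+a,\ \int_{\p D}\varphi\,d\sigma\Big)
\]
(cited from \cite[Theorem~2.26]{book2}), notes that $\mathcal C=\Pi_1\mathcal A_D^{-1}(\{0\}\times\mathbb C)$, and finishes with a one-line rank--nullity count. This packages existence and the dimension bound into a single linear-algebra step; your PDE argument is more transparent about \emph{why} the equilibrium density is unique, but it splits the work into an existence half that you leave as a sketch and a uniqueness half.

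Your uniqueness half has a concrete gap. After you obtain $u\equiv\alpha$ in $D$ and (from $m=\int\varphi=0$) $u(x)\to0$ as $|x|\to\infty$, you claim ``by the maximum principle for the exterior domain $u\equiv\alpha$ outside as well''. That step is wrong as written: the maximum principle for the exterior problem uses the boundary value $\alpha$ on $\p D$ \emph{and} the value $0$ at infinity, so it only traps $u$ between $0$ and $\alpha$, not forces it to be constant. The clean patch is to bypass the exterior max principle entirely: since $\int_{\p D}\varphi=0$ and $\mathcal S_D[\varphi]=\alpha$, one has
\[
-(\varphi,\mathcal S_D[\varphi])_{-\f12,\f12}=-\alpha\int_{\p D}\varphi\,d\sigma=0,
\]
and this pairing is strictly positive on nonzero mean-zero densities (the energy identity, \cite[Lemma~2.10]{book3}, which the paper itself uses later in Theorem~\ref{thm S_tilde invertible heat}). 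Hence $\varphi=0$. With this fix your argument goes through; alternatively, quoting the invertibility of $\mathcal A_D$ also disposes of the existence part you flagged as the main obstacle.
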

\begin{proof}
It is known that 
\beas
\mathcal{A}_D: H^{-1/2}(\partial D)\times \mathbb{C} &\rightarrow& H^{1/2}(\partial D)\times \mathbb{C} \\
(\varphi, a) &\rightarrow& \Big(\mathcal{S}_D[\varphi] + a, \int_{\p D}\varphi d\sigma\Big),
\eeas 
is invertible \cite[Theorem 2.26]{book2}. 

We can see that $\mathcal{C}=\Pi_1\mathcal{A}_D^{-1}(0,\mathbb{C})$, where $\Pi_1[(\varphi,a)]=\varphi$.
The invertibility of $\mathcal{A}_D$ implies that $\mbox{Ker}(\Pi_1\mathcal{A}_D^{-1}(0,\cdot))=\{0\}$. Thus, by the range theorem we have
\beas
1 = \mbox{dim}(\mbox{Im}(\Pi_1\mathcal{A}_D^{-1}(0,\cdot))) + \mbox{dim}(\mbox{Ker}(\Pi_1\mathcal{A}_D^{-1}(0,\cdot))) =  \mbox{dim}(\mbox{Im}(\Pi_1\mathcal{A}_D^{-1}(0,\cdot)))=\mbox{dim}(\mathcal{C}).
\eeas
\end{proof}
\begin{definition} \label{def-phi_0 heat}
We call $\varphi_0$ the unique element of $\mathcal{C}$ such that $\int_{\p D}\varphi_0 d\sigma =1$.
\end{definition} Note that for every $\varphi \in H^{-1/2}(\partial D)$ we have the decomposition 
\beas
\varphi = \varphi -  \Big(\int_{\p D}\varphi d\sigma\Big) \varphi_0 + \Big(\int_{\p D}\varphi d\sigma\Big) \varphi_0 := \psi + \Big(\int_{\p D}\varphi d\sigma\Big) \varphi_0,
\eeas 
where we can see that $(\psi,1)_{-\f{1}{2},\f{1}{2}}=0$. This kind of decomposition, $\varphi = \psi + \alpha\varphi_0$, with $(\psi,1)_{-\f{1}{2},\f{1}{2}}=0$ is unique.

	Note that we can decompose $H^{-1/2}$ as a direct sum of elements with zero-mean and multiples of $\varphi_0$, $H^{-1/2}(\partial D) = H^{-1/2}_0(\p D) \oplus \{\mu \varphi_0, \mu \in \mathbb{C} \}$. This allows us to define the following operator.

\begin{definition} \label{def-S_tilde heat}
Let $\widetilde{\mathcal{S}}_D$ be the linear operator that satisfies
\beas
\widetilde{\mathcal{S}}_D: H^{-1/2}(\partial D) &\rightarrow& H^{1/2}(\partial D)\nonumber \\
\varphi &\rightarrow& \left\{ \begin{array}{cc}
\mathcal{S}_D[\varphi] & \quad \mbox{if } 
(\varphi,1)_{-\f{1}{2},\f{1}{2}}=0, \\
-1 & \quad \mbox{if } \varphi_0 = \varphi.
\end{array}
\right.
\eeas
\end{definition}

\begin{rmk}
	When $\mathcal{S}_D$ is invertible, $\widetilde{\mathcal{S}}_D$ is similar enough to keep the invertibility. When $\mathcal{S}_D$ is not invertible, then $\mathcal{C} = \text{ker}(\mathcal{S}_D)$ and the operator $\widetilde{\mathcal{S}}_D$ becomes an invertible alternative to $\mathcal{S}_D$ that images the kernel $\mathcal{C}$ to the space $\{ \mu \chi(\p D), \mu \in \mathbb{C}\}$.
\end{rmk}
\begin{rmk}
 $\widetilde{\mathcal{S}}_D: H^{-1/2}(\partial D) \rightarrow H^{1}(D)$ follows the same definition. 
\end{rmk}

\begin{thm} \label{thm S_tilde invertible heat}
$\widetilde{\mathcal{S}}_D$ is invertible, self-adjoint and negative for $(\cdot,\cdot)_{-\f{1}{2},\f{1}{2}}$ and satisfies the following Calder\'on identity: $\widetilde{\mathcal{S}}_D  \mathcal{K}^*_D= \mathcal{K}_D\widetilde{\mathcal{S}}_D$. 
\end{thm}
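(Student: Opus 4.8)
The plan is to exploit the splitting $H^{-1/2}(\partial D) = H^{-1/2}_0(\partial D)\oplus\mathbb{C}\varphi_0$ from Definition~\ref{def-phi_0 heat}: write every density as $\varphi = \psi + \mu\varphi_0$ with $(\psi,1)_{-\f{1}{2},\f{1}{2}} = 0$, so that $\widetilde{\mathcal{S}}_D[\varphi] = \mathcal{S}_D[\psi] - \mu$, and check each of the three assertions on the two summands, reducing everything to the classical properties of $\mathcal{S}_D$ and $\mathcal{K}_D^*$: symmetry of $\mathcal{S}_D$ for $(\cdot,\cdot)_{-\f{1}{2},\f{1}{2}}$, $\mathcal{S}_D[\varphi_0] = \alpha_0$ constant (Definition~\ref{def-S_tilde heat}), the jump relation $\partial_\nu\mathcal{S}_D[\psi]|_+ - \partial_\nu\mathcal{S}_D[\psi]|_- = \psi$, the Calder\'on relation $\mathcal{S}_D\mathcal{K}_D^* = \mathcal{K}_D\mathcal{S}_D$, and $\mathcal{K}_D[1] = \f{1}{2}$ on $\partial D$. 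For invertibility, I first note that the invertibility of the operator $\mathcal{A}_D$ from the proof of Lemma~\ref{lem-dim Ker S heat} (apply $\mathcal{A}_D^{-1}$ to $(g,0)$) expresses each $g\in H^{1/2}(\partial D)$ in exactly one way as $\mathcal{S}_D[\psi]+a$ with $\psi\in H^{-1/2}_0(\partial D)$ and $a$ constant; uniqueness forces both the injectivity of $\mathcal{S}_D$ on $H^{-1/2}_0(\partial D)$ and $\mathcal{S}_D(H^{-1/2}_0(\partial D))\cap\mathbb{C} = \{0\}$, i.e. $H^{1/2}(\partial D) = \mathcal{S}_D(H^{-1/2}_0(\partial D))\oplus\mathbb{C}$. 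Hence $\psi + \mu\varphi_0 \mapsto \mathcal{S}_D[\psi] - \mu$ is a bijection onto $\mathcal{S}_D(H^{-1/2}_0(\partial D))\oplus\mathbb{C} = H^{1/2}(\partial D)$, which is exactly the invertibility of $\widetilde{\mathcal{S}}_D$; the same computation with $H^1(D)$ in place of $H^{1/2}(\partial D)$ gives the mapping property of the remark.

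For self-adjointness, with $\varphi = \psi_1 + \mu_1\varphi_0$ and $\chi = \psi_2 + \mu_2\varphi_0$, all cross terms drop because $(\psi_i,1)_{-\f{1}{2},\f{1}{2}} = 0$ and $(\varphi_0,\mathcal{S}_D[\psi_2])_{-\f{1}{2},\f{1}{2}} = (\psi_2,\mathcal{S}_D[\varphi_0])_{-\f{1}{2},\f{1}{2}} = \alpha_0(\psi_2,1)_{-\f{1}{2},\f{1}{2}} = 0$, leaving $(\varphi,\widetilde{\mathcal{S}}_D[\chi])_{-\f{1}{2},\f{1}{2}} = (\psi_1,\mathcal{S}_D[\psi_2])_{-\f{1}{2},\f{1}{2}} - \mu_1\mu_2$, which is symmetric in $\varphi,\chi$ by symmetry of $\mathcal{S}_D$. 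For negativity I will use the energy identity: for $\psi$ mean-zero, $u := \mathcal{S}_D[\psi]$ is harmonic off $\partial D$ and decays at infinity (zero total charge), so Green's formula on $D$ and on $\R^2\setminus\overline{D}$ combined with the jump relation give $-(\psi,\mathcal{S}_D[\psi])_{-\f{1}{2},\f{1}{2}} = \int_{\R^2}|\nabla u|^2 \ge 0$, with equality only for $\psi = 0$. Therefore $-(\varphi,\widetilde{\mathcal{S}}_D[\varphi])_{-\f{1}{2},\f{1}{2}} = \int_{\R^2}|\nabla u|^2 + \mu^2 > 0$ for $\varphi\ne 0$ (with the obvious complexification of the quadratic form), so $\widetilde{\mathcal{S}}_D$ is negative and $-(\cdot,\widetilde{\mathcal{S}}_D[\cdot])_{-\f{1}{2},\f{1}{2}}$ is an inner product on $H^{-1/2}(\partial D)$.

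For the Calder\'on identity, duality together with $\mathcal{K}_D[1] = \f{1}{2}$ gives $(\mathcal{K}_D^*[\psi],1)_{-\f{1}{2},\f{1}{2}} = (\psi,\mathcal{K}_D[1])_{-\f{1}{2},\f{1}{2}} = \f{1}{2}(\psi,1)_{-\f{1}{2},\f{1}{2}}$, so $\mathcal{K}_D^*$ maps $H^{-1/2}_0(\partial D)$ into itself, where $\widetilde{\mathcal{S}}_D = \mathcal{S}_D$ and $\widetilde{\mathcal{S}}_D\mathcal{K}_D^* = \mathcal{S}_D\mathcal{K}_D^* = \mathcal{K}_D\mathcal{S}_D = \mathcal{K}_D\widetilde{\mathcal{S}}_D$ is just the classical relation. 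On $\varphi_0$: $\mathcal{S}_D\mathcal{K}_D^*[\varphi_0] = \mathcal{K}_D\mathcal{S}_D[\varphi_0] = \alpha_0\mathcal{K}_D[1] = \f{\alpha_0}{2}$ is a constant, hence $\mathcal{K}_D^*[\varphi_0]\in\mathcal{C} = \mathbb{C}\varphi_0$ by Lemma~\ref{lem-dim Ker S heat}, and $(\mathcal{K}_D^*[\varphi_0],1)_{-\f{1}{2},\f{1}{2}} = \f{1}{2}$ forces $\mathcal{K}_D^*[\varphi_0] = \f{1}{2}\varphi_0$; consequently $\widetilde{\mathcal{S}}_D\mathcal{K}_D^*[\varphi_0] = \f{1}{2}\widetilde{\mathcal{S}}_D[\varphi_0] = -\f{1}{2} = \mathcal{K}_D[-1] = \mathcal{K}_D\widetilde{\mathcal{S}}_D[\varphi_0]$. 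By linearity over the direct sum the identity holds on all of $H^{-1/2}(\partial D)$.

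The main obstacle is the negativity, which is precisely the two-dimensional obstruction the whole construction is built around: the energy identity requires $\mathcal{S}_D[\psi]$ to decay at infinity so the exterior Dirichlet integral converges and the contribution at infinity vanishes, and this needs $\int_{\partial D}\psi = 0$; for densities of nonzero mean it fails, $\mathcal{S}_D$ is genuinely non-invertible, and the role of $\widetilde{\mathcal{S}}_D$ is to restore both invertibility and definiteness by replacing the value of $\mathcal{S}_D$ on the one-dimensional space $\mathcal{C}$ with the constant $-1$. A secondary, bookkeeping-level point, on which all three computations rest, is verifying that the cross terms between $\varphi_0$ and $H^{-1/2}_0(\partial D)$ vanish, which is exactly what makes the splitting orthogonal for the new inner product.
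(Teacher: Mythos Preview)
Your proof is correct and follows essentially the same strategy as the paper: decompose $\varphi=\psi+\mu\varphi_0$ with $\psi$ mean-zero, and verify each property on the two summands, reducing to the classical facts about $\mathcal{S}_D$ and $\mathcal{K}_D^*$. The self-adjointness, negativity, and Calder\'on identity arguments match the paper's almost line for line, though you spell out more of the details (the paper simply cites \cite[Lemma 2.10]{book3} for the negativity of $\mathcal{S}_D$ on mean-zero densities, and asserts without proof that $\mathcal{K}_D^*\varphi_0=\tfrac{1}{2}\varphi_0$, both of which you rederive).

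The one genuine variation is invertibility. The paper observes that $\widetilde{\mathcal{S}}_D$, being a rank-one modification of $\mathcal{S}_D$, is Fredholm of index zero, and then checks injectivity. You instead read off the decomposition $H^{1/2}(\partial D)=\mathcal{S}_D(H^{-1/2}_0(\partial D))\oplus\mathbb{C}$ directly from the invertibility of $\mathcal{A}_D$ and conclude bijectivity explicitly. Your route is slightly more constructive and avoids invoking the Fredholm alternative; the paper's is shorter once one accepts the Fredholm property. Both are perfectly fine.
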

\begin{proof}
The invertibility is a direct consequence of Lemma \ref{lem-dim Ker S heat}. 

Indeed, since $\mathcal{S}_D$ is Fredholm of zero index, so is $\widetilde{\mathcal{S}}_D$. Therefore, we only need the injectivity. Suppose that, $\exists \; \varphi \neq 0$ such that $\widetilde{\mathcal{S}}_D[\varphi] = 0$. This mean that, $\exists \; \alpha \neq 0 \in \mathbb{C}$ such that $\varphi = \alpha \varphi_0$. Therefore, $\widetilde{\mathcal{S}}_D[\varphi] = \alpha\widetilde{\mathcal{S}}_D[\varphi_0] = -\alpha = 0$, which is a contradiction. Hence $\varphi = 0$.

The self-adjointness comes directly form that of $\mathcal{S}_D$. Noticing that $\varphi_0$ is an eigenfunction of eigenvalue $1/2$ of $\mathcal{K}_D^*$ we get the Calder\'on identity from a similar one  satisfied by $\mathcal{S}_D$: $\mathcal{S}_D\mathcal{K}^*_D = \mathcal{K}_D\mathcal{S}_D$; see \cite[Lemma 2.12]{book3}.

It is known that $\int_{\p D}\psi \mathcal{S}_D[\psi]d\sigma <0$ if $(\psi,1)_{-\f{1}{2},\f{1}{2}} = 0$ and $\psi \neq 0$, see \cite[Lemma 2.10]{book3}. Therefore, writing $\varphi = \psi + \Big(\int_{\p D}\varphi d\sigma\Big) \varphi_0$, with $\psi =\varphi - \Big(\int_{\p D}\varphi d\sigma\Big) \varphi_0$,  and noticing that $\int_{\p D} \varphi_0 \widetilde{\mathcal{S}}_D[\psi] d\sigma = \int_{\p D} \widetilde{\mathcal{S}}_D [\varphi_0] \psi d\sigma = -\int_{\p D} \psi d\sigma = 0$,  we have
\beas
\int_{\p D}\varphi \widetilde{\mathcal{S}}_D[\varphi]d\sigma &=& \int_{\p D}\psi \widetilde{\mathcal{S}}_D[\psi]d\sigma +  \Big(\int_{\p D}\varphi d\sigma\Big)^2\widetilde{\mathcal{S}}_D[\varphi_0]\\
&=& \int_{\p D}\psi \mathcal{S}_D[\psi]d\sigma - \Big(\int_{\p D}\varphi d\sigma\Big)^2 <0,
\eeas
if $\varphi \neq 0$.
%

\end{proof}

\begin{definition} \label{addeq5}
	We define the space $\mathcal{H}^*(\partial D)$ as the Hilbert space resulting from endowing $H^{-1/2}(\partial D)$ with the inner product
	\be
		(u,v)_{\mathcal{H}^*} := -( u, \widetilde{\mathcal{S}}_D[v])_{-\f{1}{2},\f{1}{2}}. 
	\ee
	Similarly, we let $\mathcal{H}$ to be the Hilbert space resulting from endowing $H^{1/2}$ with the inner product 
	\be
		(u, v)_{\mathcal{H}}= - (\widetilde{\mathcal{S}}_{D}^{-1}[u], v)_{-\f{1}{2},\f{1}{2}}.
	\ee
\end{definition}

If $D$ is $\mathcal{C}^{1,\alpha}$, we have the following result.
\begin{lem} \label{lem-K_star_properties2d heat}
Let $D$ be a $\mathcal{C}^{1,\alpha}$ bounded domain of $\mathbb{R}^2$ and let $\widetilde{\mathcal{S}}_D$ be the operator introduced in Definition \ref{def-S_tilde heat}. Then
\begin{enumerate}
\item[(i)]
The operator $\mathcal{K}_D^*$ is compact self-adjoint in the Hilbert space $\mathcal{H}^*(\p D)$ and $\mathcal{H}^*(\p D)$ is equivalent to $H^{-\f{1}{2}}(\p D)$; Similarly, the Hilbert space $\mathcal{H}(\p D)$ is equivalent to $H^{\f{1}{2}}(\p D)$.
\item[(ii)]
Let $(\lambda_j,\varphi_j) $, $j=0, 1, 2, \ldots,$ be the eigenvalue and normalized eigenfunction pair of $\mathcal{K}_D^*$ with $\lambda_0 = \f{1}{2}$.
Then, $\lambda_j \in (-\f{1}{2}, \f{1}{2}]$ and $\lambda_j \rightarrow 0$ as $j \rightarrow \infty$;
\item[(iii)]
The following representation formula holds: for any $\varphi \in H^{-1/2}(\p D)$,
$$
\mathcal{K}_D^* [\varphi]
= \sum_{j=0}^{\infty} \lambda_j (\varphi, \varphi_j)_{\mathcal{H}^*} \otimes \varphi_j.
$$
\end{enumerate}
\end{lem}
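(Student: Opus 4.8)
The plan is to transfer the classical symmetrization argument for the Neumann--Poincar\'e operator (as carried out for connected domains in \cite{kang1} and for the interior problem in \cite{book3}) to the non-connected setting, using the substitute operator $\widetilde{\mathcal{S}}_D$ in place of $\mathcal{S}_D$. First I would establish the norm equivalence of $\mathcal{H}^*(\p D)$ with $H^{-1/2}(\p D)$. By Theorem \ref{thm S_tilde invertible heat}, $\widetilde{\mathcal{S}}_D$ is invertible, self-adjoint and negative for the duality pairing, and it is bounded from $H^{-1/2}(\p D)$ to $H^{1/2}(\p D)$; hence $(u,v)_{\mathcal{H}^*}=-(u,\widetilde{\mathcal{S}}_D[v])_{-\f12,\f12}$ is a genuine inner product, and $\|u\|_{\mathcal{H}^*}^2=-(u,\widetilde{\mathcal{S}}_D[u])_{-\f12,\f12}\le \|\widetilde{\mathcal{S}}_D\|\,\|u\|_{-1/2}^2$. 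For the reverse inequality I would invoke the closed graph / open mapping theorem: $\widetilde{\mathcal{S}}_D^{-1}\colon H^{1/2}(\p D)\to H^{-1/2}(\p D)$ is bounded, so $\|u\|_{-1/2}^2=(u,\widetilde{\mathcal{S}}_D^{-1}\widetilde{\mathcal{S}}_D[u])\lesssim \|u\|_{-1/2}\|\widetilde{\mathcal{S}}_D[u]\|_{1/2}$; combined with the square-root estimate $\|\widetilde{\mathcal{S}}_D[u]\|_{1/2}\lesssim \|u\|_{\mathcal{H}^*}$ coming from the factorization through the self-adjoint negative square root of $-\widetilde{\mathcal{S}}_D$, this gives $\|u\|_{-1/2}\lesssim\|u\|_{\mathcal{H}^*}$. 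The statement for $\mathcal{H}(\p D)$ follows by duality, identifying $\mathcal{H}(\p D)$ as the image $\widetilde{\mathcal{S}}_D(\mathcal{H}^*(\p D))$.

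Next I would prove self-adjointness of $\mathcal{K}_D^*$ on $\mathcal{H}^*(\p D)$. This is exactly the Calder\'on identity $\widetilde{\mathcal{S}}_D\mathcal{K}_D^*=\mathcal{K}_D\widetilde{\mathcal{S}}_D$ from Theorem \ref{thm S_tilde invertible heat}: for $u,v\in\mathcal{H}^*(\p D)$,
\[
(\mathcal{K}_D^*u,v)_{\mathcal{H}^*}=-(\mathcal{K}_D^*u,\widetilde{\mathcal{S}}_D v)_{-\f12,\f12}=-(u,\mathcal{K}_D\widetilde{\mathcal{S}}_D v)_{-\f12,\f12}=-(u,\widetilde{\mathcal{S}}_D\mathcal{K}_D^*v)_{-\f12,\f12}=(u,\mathcal{K}_D^*v)_{\mathcal{H}^*},
\]
where I used that the $L^2$-adjoint of $\mathcal{K}_D^*$ is $\mathcal{K}_D$. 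Compactness of $\mathcal{K}_D^*$ on $\mathcal{H}^*(\p D)$ follows from its compactness on $H^{-1/2}(\p D)$ for a $\mathcal{C}^{1,\alpha}$ domain (the kernel gains smoothness, so $\mathcal{K}_D^*\colon H^{-1/2}\to H^{-1/2+\alpha}$ is bounded and the Sobolev embedding is compact) together with the norm equivalence just established. Then the spectral theorem for compact self-adjoint operators on the Hilbert space $\mathcal{H}^*(\p D)$ yields a real eigenvalue sequence $\lambda_j\to 0$ with an orthonormal basis of eigenfunctions $\varphi_j$, and the representation formula in (iii) is the corresponding spectral decomposition.

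Finally, for the eigenvalue localization in (ii): $\varphi_0$ (Definition \ref{def-phi_0 heat}) satisfies $\mathcal{K}_D^*\varphi_0=\f12\varphi_0$ since $\mathcal{S}_D[\varphi_0]$ is constant and the jump relation (v) gives $(-\f12 I+\mathcal{K}_D^*)[\varphi_0]=\p_\nu\mathcal{S}_D[\varphi_0]|_-=0$ inside; so $\lambda_0=\f12$. For the bound $\lambda_j\in(-\f12,\f12]$ one uses the standard variational characterization: for an eigenpair, harmonic extension $u=\mathcal{S}_D[\varphi_j]$ satisfies, via the jump relations and Green's identity, $\lambda_j=\tfrac12\,\big(D_e[u]-D_i[u]\big)/\big(D_e[u]+D_i[u]\big)$ where $D_i,D_e$ are the Dirichlet energies inside and outside $D$; positivity of both energies forces $|\lambda_j|\le\f12$, and $\lambda_j=\f12$ forces $D_e[u]=0$, i.e. $u$ constant outside, which on a possibly disconnected $D$ pins down $\varphi_j$ to lie in the span of $\varphi_0$ (using Lemma \ref{lem-dim Ker S heat} that this space is one-dimensional), while $\lambda_j=-\f12$ is excluded because $\mathcal{S}_D[\varphi_j]$ would then be constant on each component and $\varphi_j$ would have to solve the same degenerate system. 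The main obstacle I anticipate is precisely this last point in the disconnected case: one must be careful that the eigenspace for $\f12$ is still one-dimensional (so that $\lambda_j<\f12$ for $j\ge1$) — this is where Lemma \ref{lem-dim Ker S heat} does the essential work, replacing the connectedness hypothesis used in \cite{book3,kang1}, and where I would spend the most care.
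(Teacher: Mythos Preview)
The paper does not give a proof of this lemma; it is stated without proof, the point being that once Theorem~\ref{thm S_tilde invertible heat} supplies an invertible, self-adjoint, negative $\widetilde{\mathcal S}_D$ together with the Calder\'on identity $\widetilde{\mathcal S}_D\mathcal K_D^*=\mathcal K_D\widetilde{\mathcal S}_D$, the rest is the standard symmetrization argument of \cite{kang1,book3}. Your overall plan is exactly this route and is correct in outline: norm equivalence from boundedness and invertibility of $\widetilde{\mathcal S}_D$, self-adjointness from the Calder\'on identity, compactness from the $\mathcal C^{1,\alpha}$ regularity together with the norm equivalence, and then the spectral theorem for (iii).

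Your treatment of the endpoints in (ii), however, contains a real error. With your own formula $\lambda_j=\tfrac12\,(D_e-D_i)/(D_e+D_i)$ one gets $\lambda_j=\tfrac12\Leftrightarrow D_i=0$ (not $D_e=0$) and $\lambda_j=-\tfrac12\Leftrightarrow D_e=0$; you have swapped the two cases. With the correct assignment, $\lambda_j=\tfrac12$ forces $u=\mathcal S_D[\varphi_j]$ to be constant on each \emph{interior} component of $D$, and for a domain with $N$ components the $\tfrac12$-eigenspace is $N$-dimensional, not one-dimensional. Lemma~\ref{lem-dim Ker S heat} does not help here: the space $\mathcal C$ there consists of those $\varphi$ for which $\mathcal S_D[\varphi]$ equals a \emph{single} constant on all of $\partial D$, which is strictly smaller than the $\tfrac12$-eigenspace when $D$ is disconnected. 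Fortunately the lemma only asserts $\lambda_j\in(-\tfrac12,\tfrac12]$ and does not claim simplicity of $\tfrac12$, so this digression is unnecessary. Drop it, correct the $D_i/D_e$ swap (so that the exclusion of $-\tfrac12$ comes from $D_e=0\Rightarrow u\equiv 0$ in the exterior $\Rightarrow u=0$ on $\partial D\Rightarrow\varphi_j=0$), and the argument goes through.
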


The following lemmas are needed in the proof of Theorem \ref{thm12d heat} and Theorem \ref{thm-Heat small volume heat}.
\begin{lem} \label{lem-change norm scaling heat}
Let $D = z + \delta B$ and $\eta$ be the function such that, for every $\varphi\in\mathcal{H}^*(\p D)$, $\eta(\varphi)(\tilde{x})= \varphi(z + \delta \tilde{x})$, for almost all $\tilde{x}\in \p B$. Then
\beas
\|\varphi\|_{\mathcal{H}^*(\p D)} = \delta\|\eta(\varphi)\|_{\mathcal{H}^*(\p B)}.
\eeas
Similarly, if for every $\varphi\in L^2(D)$, $\eta(\varphi)(\tilde{x})= \varphi(z + \delta \tilde{x})$, for almost all $\tilde{x}\in B$, then
\beas
\|\varphi\|_{L^2(D)} = \delta\|\eta(\varphi)\|_{L^2(B)}.
\eeas  
\end{lem}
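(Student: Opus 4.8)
The plan is to prove the two identities separately. The $L^2$ one is a one-line change of variables; the substance is in the $\mathcal{H}^*$ identity, where we must unwind Definition~\ref{def-S_tilde heat} and Definition~\ref{addeq5} and keep track of the extra $\log\delta$ that the rescaling $x=z+\delta\tilde x$ introduces into the logarithmic kernel of $\mathcal{S}_D$.

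\textbf{The $L^2$ identity.} Substituting $x=z+\delta\tilde x$ in $\|\varphi\|_{L^2(D)}^2=\int_D|\varphi(x)|^2\,dx$, with Jacobian $dx=\delta^2\,d\tilde x$ in $\R^2$, gives $\|\varphi\|_{L^2(D)}^2=\delta^2\int_B|\eta(\varphi)(\tilde x)|^2\,d\tilde x=\delta^2\|\eta(\varphi)\|_{L^2(B)}^2$, i.e.\ $\|\varphi\|_{L^2(D)}=\delta\|\eta(\varphi)\|_{L^2(B)}$.

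\textbf{Scaling of $\mathcal{S}_D$ and of $\varphi_0$.} For $x=z+\delta\tilde x$, $y=z+\delta\tilde y$ on $\partial D$ we have $d\sigma(y)=\delta\,d\sigma(\tilde y)$ and $\log|x-y|=\log\delta+\log|\tilde x-\tilde y|$, hence --- in contrast with the Helmholtz identity \eqref{eq-small volume single layer heat} ---
\[
\mathcal{S}_D[\varphi](z+\delta\tilde x)=\delta\,\mathcal{S}_B[\eta(\varphi)](\tilde x)+\frac{\delta\log\delta}{2\pi}\int_{\partial B}\eta(\varphi)\,d\sigma ,
\]
so the $\log\delta$ term vanishes exactly when $\eta(\varphi)$ has zero mean on $\partial B$. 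Apply this to $\varphi_0=\varphi_0^D$ from Definition~\ref{def-phi_0 heat}: from $\int_{\partial D}\varphi_0^D\,d\sigma=1$ we get $\int_{\partial B}\eta(\varphi_0^D)\,d\sigma=1/\delta$, and since $\mathcal{S}_D[\varphi_0^D]$ is constant the displayed formula forces $\mathcal{S}_B[\eta(\varphi_0^D)]$ to be constant too; therefore $\eta(\varphi_0^D)$ lies in the one-dimensional space $\{\psi\in H^{-1/2}(\partial B):\mathcal{S}_B[\psi]\text{ is constant}\}$ (Lemma~\ref{lem-dim Ker S heat} applied to $B$), and matching means gives $\eta(\varphi_0^D)=\tfrac1\delta\varphi_0^B$. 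Note also $\int_{\partial B}\eta(\psi)\,d\sigma=\tfrac1\delta\int_{\partial D}\psi\,d\sigma$, so $\eta$ sends zero-mean densities on $\partial D$ to zero-mean densities on $\partial B$.

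\textbf{The $\mathcal{H}^*$ identity.} Write the canonical decomposition $\varphi=\psi+c\varphi_0^D$ with $c=\int_{\partial D}\varphi\,d\sigma$ and $(\psi,1)_{-\frac12,\frac12}=0$. Since $\widetilde{\mathcal{S}}_D[\varphi_0^D]=-1$ (Definition~\ref{def-S_tilde heat}) one has $(\psi,\varphi_0^D)_{\mathcal{H}^*}=-(\psi,\widetilde{\mathcal{S}}_D[\varphi_0^D])_{-\frac12,\frac12}=(\psi,1)_{-\frac12,\frac12}=0$ and $\|\varphi_0^D\|_{\mathcal{H}^*(\partial D)}^2=(\varphi_0^D,1)_{-\frac12,\frac12}=1$, so by orthogonality
\[
\|\varphi\|_{\mathcal{H}^*(\partial D)}^2=\|\psi\|_{\mathcal{H}^*(\partial D)}^2+|c|^2=-(\psi,\mathcal{S}_D[\psi])_{-\frac12,\frac12}+|c|^2 .
\]
Applying $\eta$: $\eta(\psi)$ has zero mean on $\partial B$ and $\eta(\varphi)=\eta(\psi)+\tfrac c\delta\varphi_0^B$ is exactly the canonical decomposition of $\eta(\varphi)$ in $\mathcal{H}^*(\partial B)$, so the same computation there yields $\|\eta(\varphi)\|_{\mathcal{H}^*(\partial B)}^2=-(\eta(\psi),\mathcal{S}_B[\eta(\psi)])_{-\frac12,\frac12}+|c|^2/\delta^2$. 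Finally, since $\eta(\psi)$ has zero mean the $\log\delta$ term drops out of the scaling of $\mathcal{S}_D$, whence $(\psi,\mathcal{S}_D[\psi])_{-\frac12,\frac12}=\int_{\partial D}\psi\,\mathcal{S}_D[\psi]\,d\sigma=\delta^2\int_{\partial B}\eta(\psi)\,\mathcal{S}_B[\eta(\psi)]\,d\sigma=\delta^2(\eta(\psi),\mathcal{S}_B[\eta(\psi)])_{-\frac12,\frac12}$. Combining these identities gives $\|\varphi\|_{\mathcal{H}^*(\partial D)}^2=\delta^2\|\eta(\varphi)\|_{\mathcal{H}^*(\partial B)}^2$, as desired.

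\textbf{Main obstacle.} The one delicate point is the $\log\delta$ produced by rescaling the logarithmic kernel --- precisely the reason one must pass through $\widetilde{\mathcal{S}}_D$ rather than $\mathcal{S}_D$. Its resolution is structural: once the mean is split off, this term acts only on zero-mean densities, on which it is identically zero, so it never survives into the final identity. Everything else is routine bookkeeping with the boundary-measure factor $d\sigma(y)=\delta\,d\sigma(\tilde y)$ and with the symmetry of the duality pairing.
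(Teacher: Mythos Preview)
Your proof is correct and follows essentially the same route as the paper: decompose $\varphi=\psi+c\varphi_0$ with $\psi$ of zero mean, use the norm formula $\|\varphi\|_{\mathcal{H}^*(\partial D)}^2=-\int_{\partial D}\psi\,\mathcal{S}_D[\psi]\,d\sigma+\big(\int_{\partial D}\varphi\,d\sigma\big)^2$ (which the paper pulls from the proof of Theorem~\ref{thm S_tilde invertible heat}), rescale, and note that the $\log\delta$ term drops because $\eta(\psi)$ has zero mean. Your version is in fact slightly more careful than the paper's, since you explicitly establish $\eta(\varphi_0^D)=\tfrac1\delta\varphi_0^B$, which is what justifies identifying $\eta(\psi)$ with the zero-mean part of the canonical decomposition of $\eta(\varphi)$ on $\partial B$ --- a point the paper uses implicitly in its last displayed line.
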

\begin{proof}
We only prove the scaling in $\mathcal{H}^*(\p D)$. From the proof of Theorem \ref{thm S_tilde invertible heat}, we have
\beas
\|\varphi\|^2_{\mathcal{H}^*(\p D)} = -\int_{\p D}\psi \mathcal{S}_D[\psi]d\sigma + \Big(\int_{\p D}\varphi d\sigma\Big)^2,
\eeas
where $\psi =\varphi - \Big(\int_{\p D}\varphi d\sigma\Big) \varphi_0$. Note that $(\psi,1)_{-\f{1}{2},\f{1}{2}} = 0$ and so, $(\eta(\psi),\chi(\p B))_{-\f{1}{2},\f{1}{2}} = 0$ as well.

By a  rescaling argument we find that
\beas
\|\varphi\|^2_{\mathcal{H}^*(\p D)} &=& -\delta^2\int_{\p B} \int_{\p B}\f{1}{2\pi}\log |\delta(\tilde{x}-\tilde{y})|\eta(\psi)(\tilde{x}) \eta(\psi)(\tilde{y}) d\sigma(\tilde{x}) d\sigma(\tilde{y}) + \delta^2\Big(\int_{\p B}\eta(\varphi) d\sigma\Big)^2\\
&=& -\f{1}{2\pi}\delta^2\log(\delta)\Big(\int_{\p B}\eta(\psi) d\sigma\Big)^2  + 
\delta^2\left(-\int_{\p B}\eta(\psi) \mathcal{S}_D[\eta(\psi)]d\sigma + \Big(\int_{\p B}\eta(\varphi) d\sigma\Big)^2\right)\\
&=& \delta^2\|\eta(\varphi)\|^2_{\mathcal{H}^*(\p B)}.
\eeas
\end{proof}

\begin{lem} \label{lem-d/dn(u) 1/2-K^*u heat}
Let $g\in H^{1}(D)$ be such that $\Delta g = f$ with $f\in L^2(D)$. Then, in $\mathcal{H}^*(\p D)$,  
\beas
(\dfrac{1}{2}I - \mathcal{K}_{D}^*) \widetilde{\mathcal{S}}_{D}^{-1}[g] = -\df{\p g}{\p \nu} + \mathcal{T}_f.
\eeas
For some $\mathcal{T}_f \in \mathcal{H}^*(\p D)$ and $\|\mathcal{T}_f \|_{\mathcal{H}^*}\leq C \|f\|_{L^2(D)}$ for a constant $C$.

Moreover, if $g\in H^1_{loc}(\R^2)$, $\Delta g = 0$ in $\R^2\backslash \bar{D}$, $\lim_{|x|\rightarrow\infty}g(x) = 0$,  then 
\beas
\mathcal{T}_f = c_f\varphi_0 + \widetilde{\mathcal{S}}_{D}^{-1} [g],
\eeas
with
\beas
c_f = \int_D f(x) dx - \int_{\p D}\widetilde{\mathcal{S}}_{D}^{-1} [g](y) d\sigma(y),
\eeas
where $\varphi_0$ is given in Definition \ref{def-phi_0 heat}.
Here, by an abuse of notation, we still denote by $g$ the trace of $g$ on $\p D$.
\end{lem}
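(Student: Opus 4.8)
The plan is to read off $\mathcal{T}_f$ from the two Dirichlet problems implicit in the statement, carrying the two--dimensional logarithmic corrections with the substitute operator $\widetilde{\mathcal{S}}_D$ throughout.

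\emph{Part 1.} Set $\varphi := \widetilde{\mathcal{S}}_D^{-1}[g]$, where $g$ is identified with its trace on $\p D$, and use the canonical splitting $\varphi = \psi + \alpha\varphi_0$ with $(\psi,1)_{-1/2,1/2}=0$ and $\alpha=\int_{\p D}\varphi\,d\sigma$. Since $\widetilde{\mathcal{S}}_D[\psi]=\mathcal{S}_D[\psi]$ and $\widetilde{\mathcal{S}}_D[\varphi_0]=-1$, the function $w:=\mathcal{S}_D[\psi]-\alpha$ is harmonic in $D$ with $w|_{\p D}=\widetilde{\mathcal{S}}_D[\varphi]=g|_{\p D}$; hence $w$ is the harmonic extension of $g|_{\p D}$ to $D$, and $v:=g-w\in H^1_0(D)$ solves $\Delta v = f$ in $D$. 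The jump relation $\p_\nu\mathcal{S}_D[\cdot]|_{-}=(-\tfrac12 I+\mathcal{K}_D^*)$ gives $\p_\nu w|_- = (-\tfrac12 I+\mathcal{K}_D^*)[\psi]$, and since $\mathcal{K}_D^*\varphi_0=\tfrac12\varphi_0$ (Lemma \ref{lem-K_star_properties2d heat}) we get $(\tfrac12 I-\mathcal{K}_D^*)[\varphi]=(\tfrac12 I-\mathcal{K}_D^*)[\psi]=-\p_\nu w|_- = -\p_\nu g|_- + \p_\nu v|_-$, which is the asserted identity with $\mathcal{T}_f:=\p_\nu v|_-$. For the norm bound, test Green's identity for $v$ against an $H^1(D)$--extension of an arbitrary $\phi\in H^{1/2}(\p D)$ and use Poincaré's inequality to obtain $\|\p_\nu v\|_{H^{-1/2}(\p D)}\le C\|f\|_{L^2(D)}$; by Lemma \ref{lem-K_star_properties2d heat} the norms of $\mathcal{H}^*(\p D)$ and $H^{-1/2}(\p D)$ are equivalent, so $\mathcal{T}_f\in\mathcal{H}^*(\p D)$ with $\|\mathcal{T}_f\|_{\mathcal{H}^*}\le C\|f\|_{L^2(D)}$.

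\emph{Part 2.} Under the additional hypotheses $g$ is continuous across $\p D$ and $g_e:=g|_{\R^2\backslash\overline{D}}$ is harmonic with $g_e\to 0$ at infinity. As $\int_{\p D}\psi\,d\sigma=0$, the potential $\mathcal{S}_D[\psi]$ decays at infinity, so $\mathcal{S}_D[\psi]-\alpha$ is a bounded exterior harmonic function with the same trace $g|_{\p D}$ as $g_e$; uniqueness for the bounded exterior Dirichlet problem forces $g_e=\mathcal{S}_D[\psi]-\alpha$, and letting $|x|\to\infty$ yields $\alpha=0$. Thus $\varphi=\psi$ has zero mean, $g=\mathcal{S}_D[\varphi]$ in $\R^2\backslash\overline{D}$, and the jump relation gives $\p_\nu g|_+=(\tfrac12 I+\mathcal{K}_D^*)[\varphi]$. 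Because the function $g\in H^1_{loc}(\R^2)$ carries no singular part of $\Delta g$ on $\p D$ (equivalently $\p_\nu g|_+=\p_\nu g|_-$ — the case relevant for the applications, where $g$ is a Newtonian or Helmholtz volume potential; in general the jump $\p_\nu g|_+-\p_\nu g|_-$ is a multiple of $\varphi_0$), substituting $\p_\nu g|_- = (\tfrac12 I+\mathcal{K}_D^*)[\varphi]+c_f\varphi_0$ into the Part~1 identity and using $(\tfrac12 I-\mathcal{K}_D^*)[\varphi]+(\tfrac12 I+\mathcal{K}_D^*)[\varphi]=\varphi$ gives $\mathcal{T}_f=\varphi+c_f\varphi_0=\widetilde{\mathcal{S}}_D^{-1}[g]+c_f\varphi_0$. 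The constant is then read off by integrating over $\p D$: the divergence theorem gives $\int_{\p D}\mathcal{T}_f\,d\sigma=\int_{\p D}\p_\nu v|_-\,d\sigma=\int_D\Delta v\,dx=\int_D f\,dx$, while $\int_{\p D}\varphi_0\,d\sigma=1$, so $c_f=\int_D f\,dx-\int_{\p D}\widetilde{\mathcal{S}}_D^{-1}[g]\,d\sigma$.

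\emph{Main obstacle.} The delicate part is entirely two--dimensional: $\mathcal{S}_D$ may fail to be invertible and single--layer potentials pick up a logarithm at infinity, so the $\varphi_0$--direction must be tracked exactly — both when identifying $w=\mathcal{S}_D[\psi]-\alpha$ with the harmonic extension of $g|_{\p D}$ and in the exterior--uniqueness/flux--balance step that forces $\alpha=0$ and fixes $c_f$. This is precisely where the substitute operator $\widetilde{\mathcal{S}}_D$ and the normalization $\int_{\p D}\varphi_0\,d\sigma=1$ do their work; the remaining ingredients (single/double--layer jump calculus and the elliptic a priori estimate for $v$) are routine.
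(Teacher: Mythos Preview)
Your argument is correct and reaches the same conclusion, but the route is genuinely different from the paper's. For Part~1 the paper works by duality: it pairs $(\tfrac12 I-\mathcal{K}_D^*)\widetilde{\mathcal{S}}_D^{-1}[g]$ against an arbitrary $\varphi\in\mathcal{H}^*(\p D)$, uses the Calder\'on identity and Green's formula, and reads off $\mathcal{T}_f$ as the Riesz representative of the functional $\varphi\mapsto -\int_D f\,\widetilde{\mathcal{S}}_D[\varphi]$. You instead construct $\mathcal{T}_f$ explicitly as $\partial_\nu v|_-$ with $v\in H^1_0(D)$ solving $\Delta v=f$, which is more transparent and gives the $L^2$ bound by a direct elliptic estimate rather than through the Riesz machinery. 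For Part~2 the paper again argues by duality on $\mathcal{H}^*_0$, whereas you identify $g$ in the exterior with $\mathcal{S}_D[\varphi]$ via uniqueness of the bounded exterior Dirichlet problem (correct in two dimensions), force $\alpha=0$, and then combine the interior and exterior jump relations. Your approach makes the geometric content (harmonic extension, flux balance) more visible; the paper's is more uniform but hides the PDE.

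One caveat: your parenthetical ``in general the jump $\partial_\nu g|_+-\partial_\nu g|_-$ is a multiple of $\varphi_0$'' is not correct --- under the stated hypotheses that jump can be any element of $H^{-1/2}(\p D)$ --- and your substitution $\partial_\nu g|_-=(\tfrac12 I+\mathcal{K}_D^*)[\varphi]+c_f\varphi_0$ is circular at that point. In fact both your proof and the paper's silently require $\partial_\nu g|_+=\partial_\nu g|_-$ for Part~2 (the paper cancels the two $\int_{\p D}\partial_\nu g\,\widetilde{\mathcal{S}}_D[\varphi]$ terms without distinguishing the traces). This holds for the volume potentials used in the applications, and under that assumption your flux balance correctly returns $c_f=\int_D f-\int_{\p D}\widetilde{\mathcal{S}}_D^{-1}[g]$ (both sides in fact vanish, since $\alpha=0$ and $\int_D f=\int_{\p D}\partial_\nu g|_-=\int_{\p D}\partial_\nu g|_+=0$ by the exterior decay). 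So the hedge should simply be replaced by the honest assumption $[\partial_\nu g]=0$.
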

\begin{proof}
Let $\varphi \in \mathcal{H}^*(\p D)$. Then 
\beas
\Big((\f{1}{2}I - \mathcal{K}_{D}^*) \widetilde{\mathcal{S}}_{D}^{-1} [g], \varphi \Big)_{\mathcal{H}^*} &=& -\Big( \widetilde{\mathcal{S}}_{D}^{-1}[g],   \big(\f{1}{2}I - \mathcal{K}_{D}\big) \widetilde{\mathcal{S}}_{D}[\varphi]  \Big)_{-\f{1}{2}, \f{1}{2}}\\
&=&  -\Big( \widetilde{\mathcal{S}}_{D}^{-1}[g],  \widetilde{\mathcal{S}}_{D}\big(\f{1}{2}I - \mathcal{K}_{D}^*\big)[\varphi]  \Big)_{-\f{1}{2}, \f{1}{2}}\\
&=&  -\Big(g,  \big(\f{1}{2}I - \mathcal{K}_{D}^*\big)[\varphi]  \Big)_{-\f{1}{2}, \f{1}{2}}\\
&=&  -\Big(g, -\f{\p \widetilde{\mathcal{S}}_{D}[\varphi]}{\p \nu}\Big\vert_-   \Big)_{-\f{1}{2}, \f{1}{2}}\\
&=&  \int_{\p D}\df{\p g}{\p \nu}\widetilde{\mathcal{S}}_{D}[\varphi]d\sigma - \int_{D}\Big(f \widetilde{\mathcal{S}}_{D}[\varphi]-\Delta  \widetilde{\mathcal{S}}_{D}[\varphi]\big(g\big)\Big)dx\\
&=&  -\Big(\df{\p g}{\p \nu} ,\varphi \Big)_{\mathcal{H}^*} - \int_{D}f \widetilde{\mathcal{S}}_{D}[\varphi]dx.
\eeas
We have used the fact that $\widetilde{\mathcal{S}}_{D}$ is harmonic in $D$.

Consider the linear application $\mathcal{T}_f[\varphi] := -\int_{D}f \widetilde{\mathcal{S}}_{D}[\varphi]dx$. We have
\beas
|\mathcal{T}_f[\varphi]| \leq C \|f\|_{L^2(D)} \|\widetilde{\mathcal{S}}_{D}[\varphi]\|_{L^2(D)}\leq C_f\|\widetilde{\mathcal{S}}_{D}[\varphi]\|_{H^1(D)} \leq C_f \|\widetilde{\mathcal{S}}_{D}[\varphi]\|_{H^{\f{1}{2}}(\p D)} \leq C_f\|\varphi\|_{H^{-\f{1}{2}}(\p D)}.
\eeas
Here we have used Holder's inequality, a standard Sobolev embedding, the trace theorem and the fact that $\widetilde{\mathcal{S}}_{D}: H^{-\f{1}{2}}(\p D) \rightarrow H^{\f{1}{2}}(\p D)$ is continuous. By the Riez representation theorem, there exists $v \in \mathcal{H}^*(\p D)$ such that $\mathcal{T}_f[\varphi] = (v, \varphi)_{\mathcal{H}^*}\; ,\forall
\varphi \in \mathcal{H}^*(\p D)$. 

By abuse of notation we still denote $\mathcal{T}_f:= v$ to make explicit the dependency on $f$. It follows that
\beas
\|\mathcal{T}_f\|^2_{\mathcal{H}^*}  = -\int_{D}f \widetilde{\mathcal{S}}_{D}[\mathcal{T}_f]dx &\leq &
 C \|f\|_{L^2(D)} \|\widetilde{\mathcal{S}}_{D}[\mathcal{T}_f]\|_{L^2(D)} \\ 
 &\leq & C \|f\|_{L^2(D)}\|\widetilde{\mathcal{S}}_{D}[\mathcal{T}_f]\|_{H^1(D)} \\
 &\leq & C \|f\|_{L^2(D)}\|\widetilde{\mathcal{S}}_{D}[\mathcal{T}_f]\|_{H^{\f{1}{2}}(\p D)}\\
 &\leq & C \|f\|_{L^2(D)}\|\mathcal{T}_f\|_{\mathcal{H}^*}.
\eeas


We now show that in $\mathcal{H}^*_0(\p D)$, $\mathcal{T}_f = \widetilde{\mathcal{S}}_{D}^{-1}[g]$.

Indeed, let $\varphi\in \mathcal{H}^*_0(\p D)$, then
\beas
\Big(\widetilde{\mathcal{S}}_{D}^{-1} [g], \varphi \Big)_{\mathcal{H}^*} &=& -\Big( \widetilde{\mathcal{S}}_{D}^{-1}[g],   \widetilde{\mathcal{S}}_{D}[\varphi]  \Big)_{-\f{1}{2}, \f{1}{2}}\\
&=&  -\Big(g, \varphi  \Big)_{-\f{1}{2}, \f{1}{2}}\\
&=&  -\Big(g,  \f{\p \widetilde{\mathcal{S}}_{D}[\varphi]}{\p \nu}\Big\vert_+  -\f{\p \widetilde{\mathcal{S}}_{D}[\varphi]}{\p \nu}\Big\vert_-    \Big)_{-\f{1}{2}, \f{1}{2}}\\
&=&  \int_{\p D}\df{\p g}{\p \nu}\widetilde{\mathcal{S}}_{D}[\varphi]d\sigma  - \int_{\p D}\df{\p g}{\p \nu}\widetilde{\mathcal{S}}_{D}[\varphi]d\sigma + \int_{\p B_{\infty}}\df{\p g}{\p \nu}\widetilde{\mathcal{S}}_{D}[\varphi]d\sigma  - \int_{\p B_{\infty}}g\df{\p \widetilde{\mathcal{S}}_{D}[\varphi]}{\p \nu}d\sigma \\
 && - \int_{\R^2}\Big(f \widetilde{\mathcal{S}}_{D}[\varphi]-\Delta  \widetilde{\mathcal{S}}_{D}[\varphi]\big(g\big)\Big)dx\\
&=& -\int_{D}f \widetilde{\mathcal{S}}_{D}[\varphi]dx.
\eeas

Here we have used the assumption on $g$, the fact that $\widetilde{\mathcal{S}}_{D}[\varphi]$ is harmonic in $D$ and $\R^2\backslash \bar{D}$ and that for $\varphi\in\mathcal{H}^*_0(\p D)$ we have $\widetilde{\mathcal{S}}_{D}[\varphi](x) = O(\f{1}{|x|})$ and $\df{\p \widetilde{\mathcal{S}}_{D}[\varphi]}{\p \nu}(x) =  O(\f{1}{|x|})$ for $|x|\rightarrow \infty$.

Therefore, 
\beas
\mathcal{T}_f  = (\mathcal{T}_f-\widetilde{\mathcal{S}}_{D}^{-1} [g], \varphi_0 )_{\mathcal{H}^*}\varphi_0 + \widetilde{\mathcal{S}}_{D}^{-1} [g].
\eeas
Finally, recaling the definition of $\varphi_0$ given in Definition $\ref{def-phi_0 heat}$ we obtain that
\beas
(\mathcal{T}_f-\widetilde{\mathcal{S}}_{D}^{-1} [g], \varphi_0 )_{\mathcal{H}^*} = \int_D f(x) dx - \int_{\p D}\widetilde{\mathcal{S}}_{D}^{-1} [g](y) d\sigma(y).
\eeas

\end{proof}


%

\subsection{Asymptotic expansions}
Let us now consider the single-layer potential for the Helmholtz equation in $\mathbb{R}^2$ given by
$$
\mathcal{S}_{D}^{ k} [\varphi](x) =  \int_{\p D} G(x, y,  k) \varphi(y) d\sigma(y),  \quad x \in  \p {D},
$$
where $G(x, y,  k)= -\dfrac{i}{4}H_0^{(1)}( k|x-y|)$ and $H_0^{(1)}$ is the Hankel function of first kind and order $0$. We have, for $k\ll 1$, 
$$
-\dfrac{i}{4}H_0^{(1)}( k|x-y|) = \dfrac{1}{2\pi}\log |x-y|+\tau_ k+\sum_{j=1}^{\infty}(b_j\log  k|x-y|+c_j)( k|x-y|)^{2j},
$$
where
$$
\tau_ k = \dfrac{1}{2\pi}(\log  k+ \gamma_e-\log 2)-\dfrac{i}{4}, \quad b_j = \dfrac{(-1)^j}{2\pi}\dfrac{1}{2^{2j}(j!)^2}, \quad c_j = -bj\left( \gamma_e -\log 2-\dfrac{i\pi}{2}-\sum_{n=1}^j\dfrac{1}{n}\right),
$$
and
$ \gamma_e$ is the Euler constant. Thus, we get
\be \label{series-s2d heat}
\mathcal{S}_{D}^{ k}=  \hat{\mathcal{S}}_{D}^ k +\sum_{j=1}^{\infty} \left( k^{2j}\log  k\right) \mathcal{S}_{D, j}^{(1)}+\sum_{j=1}^{\infty}  k^{2j} \mathcal{S}_{D, j}^{(2)},
\ee
where
\beas
\hat{\mathcal{S}}_{D}^ k[\varphi](x) &=& \mathcal{S}_{D}[\varphi](x) + \tau_ k\int_{\partial D}\varphi d\sigma, \\
\mathcal{S}_{D, j}^{(1)} [\varphi](x) &=& \int_{\p D} b_j|x-y|^{2j} \varphi(y)d\sigma(y),\\
\mathcal{S}_{D, j}^{(2)} [\varphi](x) &=& \int_{\p D} |x-y|^{2j}(b_j\log|x-y|+c_j)\varphi(y)d\sigma(y).
\eeas
\begin{lem} \label{lem-appendix11_2d heat}
The norms $\| \mathcal{S}_{D, j}^{(1)} \|_{\mathcal{L}({\mathcal{H}^*(\p D)}, \mathcal{H}(\p D))}$ and $\| \mathcal{S}_{D, j}^{(2)} \|_{\mathcal{L}({\mathcal{H}^*(\p D)}, \mathcal{H}(\p D))}$ are uniformly bounded with respect to $j$. Moreover, the series in \eqref{series-s2d heat} is convergent in $\mathcal{L}({\mathcal{H}^*(\p D)}, \mathcal{H}(\p D))$ for $k<1$.
\end{lem}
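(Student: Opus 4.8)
The plan is to reduce the whole statement to the boundedness of the Laplace single-layer potential $\mathcal{S}_D\colon H^{-\f12}(\p D)\to H^{\f12}(\p D)$, by exploiting that — apart from the single genuinely singular factor $\log|x-y|$ — the kernels occurring in $\mathcal{S}^{(1)}_{D,j}$ and $\mathcal{S}^{(2)}_{D,j}$ are restrictions to $\p D\times\p D$ of polynomials in $(x,y)$. By Lemma \ref{lem-K_star_properties2d heat}(i), the norms of $\mathcal{H}^*(\p D)$ and $\mathcal{H}(\p D)$ are equivalent — with constants depending only on $D$ — to those of $H^{-\f12}(\p D)$ and $H^{\f12}(\p D)$, so it suffices to bound the operators in $\mathcal{L}\big(H^{-\f12}(\p D),H^{\f12}(\p D)\big)$. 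Throughout I would use that a monomial of degree $d$ restricted to the $\mathcal{C}^{1,\alpha}$ curve $\p D$ lies in $C^{1,\alpha}(\p D)\hookrightarrow H^1(\p D)\hookrightarrow H^{\f12}(\p D)$, and that multiplication by such a function is bounded on $H^{\pm\f12}(\p D)$, the corresponding norms being at most $C(D)^{d}$ (here and below $C(D)$ is a constant depending only on $D$, varying from line to line).

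First I would treat $\mathcal{S}^{(1)}_{D,j}$. Expanding $|x-y|^{2j}=(|x|^2-2x\cdot y+|y|^2)^j=\sum_{i=1}^{N_j} a_{j,i}\,p_{j,i}(x)\,q_{j,i}(y)$ with $N_j\le \mathrm{poly}(j)$, the $p_{j,i},q_{j,i}$ monomials of degree $\le 2j$, and $\sum_i|a_{j,i}|\le C^{j}$ for an absolute constant $C$, one obtains
\[
\mathcal{S}^{(1)}_{D,j}[\varphi]=b_j\sum_{i=1}^{N_j} a_{j,i}\,\big(\varphi,q_{j,i}\big)_{-\f12,\f12}\,p_{j,i},
\]
hence, using $|b_j|=\f{1}{2\pi\,4^{j}(j!)^{2}}$,
\[
\big\|\mathcal{S}^{(1)}_{D,j}\big\|_{\mathcal{L}(H^{-\f12},H^{\f12})}\le |b_j|\,\mathrm{poly}(j)\,C(D)^{j}=\f{\mathrm{poly}(j)\,C(D)^{j}}{2\pi\,4^{j}(j!)^{2}},
\]
which tends to $0$ as $j\to\infty$ because $(j!)^2$ outgrows $a^{j}\,\mathrm{poly}(j)$ for every fixed $a>0$. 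In particular $\sup_j\|\mathcal{S}^{(1)}_{D,j}\|<\infty$.

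Next, for $\mathcal{S}^{(2)}_{D,j}$ I would split the kernel as $|x-y|^{2j}\big(b_j\log|x-y|+c_j\big)=c_j|x-y|^{2j}+b_j|x-y|^{2j}\log|x-y|$. The first summand is exactly of the previous type, now with the coefficient $|c_j|\le|b_j|\big(|\gamma_e-\log2-\f{i\pi}{2}|+\sum_{n=1}^{j}\f1n\big)\le C\,|b_j|\,(1+\log j)$, which costs only an extra logarithmic factor. The second summand is the delicate point — and the hard part of the proof. Here I would use the polynomial expansion again, together with $\mathcal{S}_D[\psi](x)=\f{1}{2\pi}\int_{\p D}\log|x-y|\,\psi(y)\,d\sigma(y)$, to rewrite
\[
b_j\int_{\p D}|x-y|^{2j}\log|x-y|\,\varphi(y)\,d\sigma(y)=2\pi b_j\sum_{i=1}^{N_j} a_{j,i}\,p_{j,i}\,\mathcal{S}_D\big[q_{j,i}\,\varphi\big].
\]
Now $\mathcal{S}_D\colon H^{-\f12}(\p D)\to H^{\f12}(\p D)$ is bounded — it differs from the operator $\widetilde{\mathcal{S}}_D$, whose continuity was used in the proof of Lemma \ref{lem-d/dn(u) 1/2-K^*u heat}, by a rank-one term — and multiplication by a $C^{1,\alpha}(\p D)$ function is bounded on $H^{\pm\f12}(\p D)$; combining these with $\sum_i|a_{j,i}|\le C^{j}$ and $\|p_{j,i}\|_{C^{1,\alpha}(\p D)},\|q_{j,i}\|_{C^{1,\alpha}(\p D)}\le C(D)^{j}$ yields once more $\big\|\mathcal{S}^{(2)}_{D,j}\big\|_{\mathcal{L}(H^{-\f12},H^{\f12})}\le |b_j|\,\mathrm{poly}(j)\,C(D)^{j}$, which again tends to $0$. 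Thus both families are uniformly bounded in $j$.

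Finally, for the convergence of the series in \eqref{series-s2d heat}: setting $M:=\sup_j\big(\|\mathcal{S}^{(1)}_{D,j}\|+\|\mathcal{S}^{(2)}_{D,j}\|\big)<\infty$, the sum of the operator norms of the terms with index $\ge N$ is at most
\[
M\big(|\log k|+1\big)\sum_{j\ge N}|k|^{2j}=M\big(|\log k|+1\big)\,\f{|k|^{2N}}{1-|k|^2},
\]
which tends to $0$ as $N\to\infty$ whenever $0<k<1$; hence the series converges absolutely in $\mathcal{L}\big(\mathcal{H}^*(\p D),\mathcal{H}(\p D)\big)$ for $k<1$. (The decay $\|\mathcal{S}^{(i)}_{D,j}\|=O\big(C(D)^{j}/(j!)^2\big)$ in fact gives convergence for all $k$, but only $k<1$ is needed here.) The sole genuine obstacle is the logarithmic factor in $\mathcal{S}^{(2)}_{D,j}$; once it is removed by the polynomial expansion of $|x-y|^{2j}$ reducing it to $\mathcal{S}_D$, the rest is routine bookkeeping of the $j$-dependence of the constants, all absorbed by the factor $1/(4^j(j!)^2)$ coming from $b_j$ and $c_j$.
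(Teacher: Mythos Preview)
The paper states this lemma without proof; the appendix's opening line attributes the results of the section to \cite{kang1} (for connected $D$), and Lemma \ref{lem-appendix11_2d heat} is simply recorded. So there is no proof in the paper to compare against.

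Your argument is sound. The reduction to $H^{\pm\f12}(\p D)$ via Lemma \ref{lem-K_star_properties2d heat}(i) is legitimate, and the central maneuver --- expanding the homogeneous polynomial $|x-y|^{2j}$ as a finite sum $\sum_i a_{j,i}\,p_{j,i}(x)\,q_{j,i}(y)$ with $N_j=O(j^3)$ monomial terms and coefficient sum $\sum_i|a_{j,i}|\le C^{j}$ --- reduces $\mathcal{S}_{D,j}^{(1)}$ to a finite-rank operator and the logarithmic piece of $\mathcal{S}_{D,j}^{(2)}$ to a sum of compositions $p_{j,i}\cdot\mathcal{S}_D[q_{j,i}\,\cdot\,]$. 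The multiplier estimates you invoke on $H^{\pm\f12}(\p D)$ follow by interpolating between $L^2$ and $H^1$ (and duality), with operator norm controlled by $\|p_{j,i}\|_{W^{1,\infty}(\p D)}\le \mathrm{poly}(j)\,C(D)^{j}$ on the bounded curve $\p D$. The super-exponential decay $|b_j|,|c_j|=O\big((j!)^{-2}\big)$ then absorbs every exponential-in-$j$ factor, so in fact $\|\mathcal{S}_{D,j}^{(i)}\|\to 0$, which is stronger than the uniform bound asserted. The convergence of \eqref{series-s2d heat} for $k<1$ is then immediate from the uniform bound and the geometric series, exactly as you write.
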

Observe that
$$
\left(\mathcal{S}_{D}-\widetilde{\mathcal{S}}_{D}\right)[\varphi] =  \left(\mathcal{S}_{D}-\widetilde{\mathcal{S}}_{D}\right)[\mathcal{P}_{\mathcal{H}^*_0}[\varphi]+(\varphi,\varphi_0)_{\mathcal{H}^*}\varphi_0] = (\varphi,\varphi_0)_{\mathcal{H}^*}\left(\mathcal{S}_D[\varphi_0]+1 \right).
$$
\\
Then it follows that
$$
\hat{\mathcal{S}}_{D}^ k[\varphi] = \widetilde{\mathcal{S}}_{D}[\varphi]+(\varphi,\varphi_0)_{\mathcal{H}^*}\left(\mathcal{S}_D[\varphi_0]+1 \right)+\tau_ k\int_{\partial D}\mathcal{P}_{\mathcal{H}^*_0}[\varphi]+(\varphi,\varphi_0)_{\mathcal{H}^*}\varphi_0 d\sigma = \widetilde{\mathcal{S}}_{D}[\varphi]+\Upsilon_ k[\varphi],
$$
where \be \label{defUpsilon heat} \Upsilon_ k[\varphi] = (\varphi,\varphi_0)_{\mathcal{H}^*}\left(\mathcal{S}_D[\varphi_0]+1 +\tau_ k \right).\ee

Therefore, we arrive at the following result.
\begin{lem} \label{addeq2}
For $ k$ small enough, $\hat{\mathcal{S}}_{D}^{ k} : \mathcal{H}^*(\p D) \rightarrow \mathcal{H}(\p D)$ is invertible.
\end{lem}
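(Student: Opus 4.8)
The plan is to view $\hat{\mathcal{S}}_D^{k}$ as a finite‑rank perturbation of $\widetilde{\mathcal{S}}_D$ and to invoke the Fredholm alternative. Indeed, the identity displayed just before the statement reads $\hat{\mathcal{S}}_D^{k} = \widetilde{\mathcal{S}}_D + \Upsilon_k$, with $\Upsilon_k$ given by \eqref{defUpsilon heat}, and by Theorem \ref{thm S_tilde invertible heat} the operator $\widetilde{\mathcal{S}}_D : \mathcal{H}^*(\p D)\to\mathcal{H}(\p D)$ is invertible. Since $\Upsilon_k$ has rank one it is compact, so $\hat{\mathcal{S}}_D^{k}$ is Fredholm of index zero for every $k$; hence invertibility will follow as soon as we show that $\hat{\mathcal{S}}_D^{k}$ is injective for $k$ small enough.

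For the injectivity I would argue exactly as in the proof of Theorem \ref{thm S_tilde invertible heat}. Assume $\hat{\mathcal{S}}_D^{k}[\varphi]=0$. Then $\widetilde{\mathcal{S}}_D[\varphi] = -(\varphi,\varphi_0)_{\mathcal{H}^*}\big(\mathcal{S}_D[\varphi_0]+1+\tau_k\big)$; the right‑hand side is a constant function (this is where the defining property of $\varphi_0\in\mathcal{C}$, that $\mathcal{S}_D[\varphi_0]$ is constant, enters), and since $\widetilde{\mathcal{S}}_D[\varphi_0]=-1$ by Definition \ref{def-S_tilde heat}, applying $\widetilde{\mathcal{S}}_D^{-1}$ gives $\varphi = (\varphi,\varphi_0)_{\mathcal{H}^*}\big(\mathcal{S}_D[\varphi_0]+1+\tau_k\big)\varphi_0$. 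In particular $\varphi = c\,\varphi_0$ for some scalar $c$. Substituting back and using the normalisation $\|\varphi_0\|_{\mathcal{H}^*}^2 = -(\varphi_0,\widetilde{\mathcal{S}}_D[\varphi_0])_{-\f{1}{2},\f{1}{2}} = (\varphi_0,1)_{-\f{1}{2},\f{1}{2}} = \int_{\p D}\varphi_0\,d\sigma = 1$ (cf.\ Definition \ref{def-phi_0 heat}) yields $c = c\big(\mathcal{S}_D[\varphi_0]+1+\tau_k\big)$, i.e.\ $c\big(\mathcal{S}_D[\varphi_0]+\tau_k\big)=0$.

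It then remains to observe that $\mathcal{S}_D[\varphi_0]+\tau_k\neq 0$ for $k$ small, which forces $c=0$ and hence $\varphi=0$. This is immediate from the expansion $\tau_k = \tfrac{1}{2\pi}(\log k+\gamma_e-\log 2)-\tfrac{i}{4}$: as $k\to 0$ one has $|\tau_k|\to\infty$ while $\mathcal{S}_D[\varphi_0]$ is a fixed number, so the sum is nonzero once $k$ is below some threshold (one may even note that $\varphi_0$, and therefore $\mathcal{S}_D[\varphi_0]$, is real‑valued since the kernel of $\mathcal{S}_D$ is real, whereas $\Im\tau_k=-\tfrac14$, so the sum never vanishes; but the small‑$k$ claim is all that is needed). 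Combining index‑zero Fredholmness with injectivity gives the invertibility of $\hat{\mathcal{S}}_D^{k}:\mathcal{H}^*(\p D)\to\mathcal{H}(\p D)$. I do not expect any genuine obstacle here; the only mildly delicate points are the bookkeeping identity $\|\varphi_0\|_{\mathcal{H}^*}^2=1$ and the fact that $\widetilde{\mathcal{S}}_D^{-1}$ sends a constant function $a$ to $-a\,\varphi_0$, both of which are read off from Definitions \ref{def-phi_0 heat} and \ref{def-S_tilde heat}.
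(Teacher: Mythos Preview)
Your proof is correct and follows essentially the same route as the paper: write $\hat{\mathcal{S}}_D^{k}=\widetilde{\mathcal{S}}_D+\Upsilon_k$ as a rank-one (hence compact) perturbation of an invertible operator, invoke the Fredholm alternative, and check injectivity by showing that any element of the kernel must be a scalar multiple $c\,\varphi_0$ with $c\big(\mathcal{S}_D[\varphi_0]+\tau_k\big)=0$, which forces $c=0$ for $k$ small. The paper phrases the Fredholm step via $I+\Upsilon_k\widetilde{\mathcal{S}}_D^{-1}$ on $\mathcal{H}(\p D)$ whereas you work directly with $\hat{\mathcal{S}}_D^{k}$ on $\mathcal{H}^*(\p D)$, but this is only a cosmetic difference; your bookkeeping identities $\|\varphi_0\|_{\mathcal{H}^*}^2=1$ and $\widetilde{\mathcal{S}}_D^{-1}[a]=-a\,\varphi_0$ are correct, and your side remark that $\Im\tau_k=-\tfrac14$ while $\mathcal{S}_D[\varphi_0]\in\R$ is a pleasant bonus.
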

\begin{proof}
$\Upsilon_ k$ is clearly a compact operator. Since $\widetilde{\mathcal{S}}_{D}$ is invertible, the invertibility of $\hat{\mathcal{S}}_{D}^{ k}$ is equivalent to that of $\hat{\mathcal{S}}_{D}^{ k}\widetilde{\mathcal{S}}_{D}^{-1} = I + \Upsilon_ k\widetilde{\mathcal{S}}_{D}^{-1}$. By the Fredholm alternative, we only need to prove the injectivity of $I + \Upsilon_ k \widetilde{\mathcal{S}}_{D}^{-1}$.\\
Since $\forall \; v\in H^{1/2}(\p D),\;\Upsilon_ k \widetilde{\mathcal{S}}_{D}^{-1}[v]\in \mathbb{C}$, for $\left(I + \Upsilon_ k \widetilde{\mathcal{S}}_{D}^{-1}\right)[v]=0$, we need to show that $v = \widetilde{\mathcal{S}}_{D}[\alpha \varphi_0] = -\alpha \in \mathbb{C}$.\\
We have
$$
\left(I + \Upsilon_ k \widetilde{\mathcal{S}}_{D}^{-1}\right)\widetilde{\mathcal{S}}_{D}[\alpha \varphi_0] = \alpha(\mathcal{S}_D[\varphi_0] +\tau_ k ) = 0 \quad \mbox{iff} \quad \mathcal{S}_D[\varphi_0] = -\tau_ k \mbox{ or } \alpha = 0.
$$
Since we can always find a small enough $ k$ such that $\mathcal{S}_D[\varphi_0] \neq -\tau_ k$, we need $\alpha = 0$. This yields the stated result.
\end{proof}

\begin{lem} \label{addeq3}
For $ k$ small enough, the operator $\mathcal{S}_{D}^{ k} : \mathcal{H}^*(\p D) \rightarrow \mathcal{H}(\p D) $ is invertible.
\end{lem}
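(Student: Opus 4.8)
The plan is to view $\mathcal{S}_D^{ k}$ as a perturbation of $\hat{\mathcal{S}}_D^{ k}$, which is already known to be invertible by Lemma \ref{addeq2}. Using the expansion \eqref{series-s2d heat} I would write $\mathcal{S}_D^{ k} = \hat{\mathcal{S}}_D^{ k} + \mathcal{R}_k$ with $\mathcal{R}_k := \sum_{j\geq 1}( k^{2j}\log  k)\mathcal{S}_{D,j}^{(1)} + \sum_{j\geq 1} k^{2j}\mathcal{S}_{D,j}^{(2)}$, and invoke Lemma \ref{lem-appendix11_2d heat}: since the norms $\|\mathcal{S}_{D,j}^{(1)}\|$ and $\|\mathcal{S}_{D,j}^{(2)}\|$ in $\mathcal{L}(\mathcal{H}^*(\p D),\mathcal{H}(\p D))$ are bounded uniformly in $j$, summing the resulting geometric series gives $\|\mathcal{R}_k\|_{\mathcal{L}(\mathcal{H}^*(\p D),\mathcal{H}(\p D))} = O( k^2|\log  k|)$, so $\mathcal{R}_k \to 0$ in operator norm as $ k\to 0$.

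The crux is then to show that $\|(\hat{\mathcal{S}}_D^{ k})^{-1}\|_{\mathcal{L}(\mathcal{H}(\p D),\mathcal{H}^*(\p D))}$ stays bounded (equivalently, does not grow faster than $( k^2|\log  k|)^{-1}$) as $ k\to 0$, since Lemma \ref{addeq2} only asserts invertibility. I would extract this from the explicit structure $\hat{\mathcal{S}}_D^{ k} = \widetilde{\mathcal{S}}_D + \Upsilon_ k$: by \eqref{defUpsilon heat} and Definition \ref{def-phi_0 heat}, $\Upsilon_ k$ has rank one with range the line of constant functions, so $\Upsilon_ k\widetilde{\mathcal{S}}_D^{-1}$ is a rank-one operator $\psi \mapsto \ell(\psi)\,e_ k$ on $\mathcal{H}(\p D)$, with $e_ k := \mathcal{S}_D[\varphi_0] + 1 + \tau_ k$ constant and $\ell$ a $ k$-independent functional. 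A short computation using $\int_{\p D}\varphi_0\,d\sigma = 1$ and $\widetilde{\mathcal{S}}_D[\varphi_0] = -1$ shows $1 + \ell(e_ k) = -(\mathcal{S}_D[\varphi_0] + \tau_ k)$, and the Sherman--Morrison identity $(\hat{\mathcal{S}}_D^{ k})^{-1} = \widetilde{\mathcal{S}}_D^{-1}\big(I - (1+\ell(e_ k))^{-1}\,e_ k\otimes\ell\big)$ then gives the bound: the decisive point is that $\tau_ k = \tfrac{1}{2\pi}\log  k + O(1)$ forces $|1+\ell(e_ k)|\to\infty$ while $\|e_ k\|/|1+\ell(e_ k)|$ stays bounded, so the correction term has norm $O(1)$. (Alternatively, one could bypass this quantitative estimate by writing $\mathcal{S}_D^{ k}\widetilde{\mathcal{S}}_D^{-1} = I + (\Upsilon_ k + \mathcal{R}_k)\widetilde{\mathcal{S}}_D^{-1}$, observing the perturbation is compact, and reducing to injectivity of $\mathcal{S}_D^{ k}$; the latter follows from uniqueness for the interior Dirichlet problem for $\Delta +  k^2$ — valid since $ k^2$ lies below the first Dirichlet eigenvalue of $D$ for $ k$ small — together with the exterior radiation condition and the jump relation for $\p_\nu\mathcal{S}_D^{ k}$.)

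Finally I would combine the two estimates: for $ k$ small enough $\|(\hat{\mathcal{S}}_D^{ k})^{-1}\mathcal{R}_k\| \leq C\,O( k^2|\log  k|) < 1$, so $I + (\hat{\mathcal{S}}_D^{ k})^{-1}\mathcal{R}_k$ is invertible by a Neumann series and $\mathcal{S}_D^{ k} = \hat{\mathcal{S}}_D^{ k}\big(I + (\hat{\mathcal{S}}_D^{ k})^{-1}\mathcal{R}_k\big)$ is invertible from $\mathcal{H}^*(\p D)$ onto $\mathcal{H}(\p D)$. I expect the main obstacle to be precisely the uniform bound on $(\hat{\mathcal{S}}_D^{ k})^{-1}$ in the second step; the rest is bookkeeping with the series from Appendix \ref{append1 heat}.
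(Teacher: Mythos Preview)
Your proposal is correct, but your \emph{main} route differs from the paper's. The paper takes exactly the argument you relegated to the parenthetical alternative: it observes that $\mathcal{S}_D^{ k}-\hat{\mathcal{S}}_D^{ k}$ is compact, invokes the Fredholm alternative (since $\hat{\mathcal{S}}_D^{ k}$ is invertible by Lemma~\ref{addeq2}), and then obtains injectivity of $\mathcal{S}_D^{ k}$ from uniqueness for the Helmholtz problem. That is the entire proof---three lines, no quantitative estimate on $(\hat{\mathcal{S}}_D^{ k})^{-1}$ is needed.

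Your primary approach via the Neumann series is also valid, and your Sherman--Morrison computation is right (indeed $\|\varphi_0\|_{\mathcal{H}^*}^2=1$ gives $1+\ell(e_ k)=-(\mathcal{S}_D[\varphi_0]+\tau_ k)$, so the rank-one correction stays bounded as $\tau_ k\to\infty$). The tradeoff: the Fredholm route is shorter and avoids the explicit bound you flagged as the main obstacle, while your route is more constructive and essentially pre-computes the formula for $(\hat{\mathcal{S}}_D^{ k})^{-1}$ that the paper derives separately in the very next lemma. Either way the result follows; the paper simply defers the quantitative work to where it is actually needed.
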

\begin{proof} The operator
$\mathcal{S}_{D}^{ k}-\hat{\mathcal{S}}_{D}^{ k}: \mathcal{H}^*(\p D) \rightarrow \mathcal{H}(\p D)$ is a compact operator. Because $\hat{\mathcal{S}}_{D}^{ k}$ is invertible for $ k$ small enough, by the Fredholm alternative only the injectivity of $\mathcal{S}_{D}^{ k}$ is necessary. From the uniqueness of a solution to the Helmholtz equation we get the result. 
\end{proof}

\begin{lem} The following asymptotic expansion  holds for $ k$ small enough:  
$$
\begin{array}{lll}
(\mathcal{S}_{D}^{ k})^{-1} &=&\ds \mathcal{P}_{\mathcal{H}^*_0}\widetilde{\mathcal{S}}_{D}^{-1}+\mathcal{U}_ k- k^2\log  k \mathcal{P}_{\mathcal{H}^*_0}\widetilde{\mathcal{S}}_{D}^{-1}\mathcal{S}_{D,1}^{(1)}\mathcal{P}_{\mathcal{H}^*_0}\widetilde{\mathcal{S}}_{D}^{-1} + O( k^2)
\end{array}$$
with \begin{equation} \label{addeq1}
 \mathcal{U}_ k= -\dfrac{(\widetilde{\mathcal{S}}_{D}^{-1} [\cdot],\varphi_0)_{\mathcal{H}^*}}{\mathcal{S}_D[\varphi_0]+\tau_ k}\varphi_0.
 \end{equation} Note that $\mathcal{U}_ k = O(1/\log  k)$.
\end{lem}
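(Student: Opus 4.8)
The plan is to peel off the singular part of $\mathcal{S}_D^{k}$ in two stages: first invert $\hat{\mathcal{S}}_{D}^{k}$ in closed form, and then treat the remaining analytic-in-$k^{2}$ (modulo logarithms) terms of \eqref{series-s2d heat} as a small perturbation and expand by a Neumann series.

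First I would compute $(\hat{\mathcal{S}}_{D}^{k})^{-1}$ exactly. Writing $\hat{\mathcal{S}}_{D}^{k}=\widetilde{\mathcal{S}}_{D}+\Upsilon_{k}$ and recalling from \eqref{defUpsilon heat} that $\Upsilon_{k}[\varphi]=(\varphi,\varphi_0)_{\mathcal{H}^*}\big(\mathcal{S}_D[\varphi_0]+1+\tau_k\big)$ is rank one with range the constants, together with the fact that $\widetilde{\mathcal{S}}_{D}^{-1}$ sends the constant $c$ to $-c\varphi_0$ (since $\widetilde{\mathcal{S}}_{D}[\varphi_0]=-1$), one obtains $\widetilde{\mathcal{S}}_{D}^{-1}\hat{\mathcal{S}}_{D}^{k}=I-\big(\mathcal{S}_D[\varphi_0]+1+\tau_k\big)(\cdot,\varphi_0)_{\mathcal{H}^*}\varphi_0$, a rank-one perturbation of the identity (invertible once $\mathcal{S}_D[\varphi_0]+\tau_k\neq 0$, i.e. for $k$ small, exactly as in the proof of Lemma~\ref{addeq2}). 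Inverting it by the Sherman--Morrison formula, composing with $\widetilde{\mathcal{S}}_{D}^{-1}$, and splitting $\widetilde{\mathcal{S}}_{D}^{-1}=\mathcal{P}_{\mathcal{H}^*_0}\widetilde{\mathcal{S}}_{D}^{-1}+(\widetilde{\mathcal{S}}_{D}^{-1}[\cdot],\varphi_0)_{\mathcal{H}^*}\varphi_0$, the projection part is untouched while the $\varphi_0$-component is rescaled by $-1/(\mathcal{S}_D[\varphi_0]+\tau_k)$; this gives precisely $(\hat{\mathcal{S}}_{D}^{k})^{-1}=\mathcal{P}_{\mathcal{H}^*_0}\widetilde{\mathcal{S}}_{D}^{-1}+\mathcal{U}_{k}$ with $\mathcal{U}_{k}$ as in \eqref{addeq1}. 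Since $\tau_k=\tfrac{1}{2\pi}\log k+O(1)$, this also yields $\mathcal{U}_{k}=O(1/\log k)$ and $(\hat{\mathcal{S}}_{D}^{k})^{-1}=O(1)$ in $\mathcal{L}(\mathcal{H}(\p D),\mathcal{H}^*(\p D))$.

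Next I would write $\mathcal{S}_{D}^{k}=\hat{\mathcal{S}}_{D}^{k}+\mathcal{R}_{k}$, where by \eqref{series-s2d heat} and Lemma~\ref{lem-appendix11_2d heat} the tail $\mathcal{R}_{k}=\sum_{j\geq 1}(k^{2j}\log k)\mathcal{S}_{D,j}^{(1)}+\sum_{j\geq 1}k^{2j}\mathcal{S}_{D,j}^{(2)}$ converges in $\mathcal{L}(\mathcal{H}^*(\p D),\mathcal{H}(\p D))$ for $k<1$ and, using the uniform bounds on $\mathcal{S}_{D,j}^{(i)}$, satisfies $\mathcal{R}_{k}=(k^{2}\log k)\,\mathcal{S}_{D,1}^{(1)}+O(k^{2})$ in operator norm. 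Then $\mathcal{S}_{D}^{k}=\hat{\mathcal{S}}_{D}^{k}\big(I+(\hat{\mathcal{S}}_{D}^{k})^{-1}\mathcal{R}_{k}\big)$ with $\|(\hat{\mathcal{S}}_{D}^{k})^{-1}\mathcal{R}_{k}\|=O(k^{2}|\log k|)<1$ for $k$ small, so a Neumann expansion (legitimate since $\mathcal{S}_D^k$ is invertible by Lemma~\ref{addeq3}) gives
$$(\mathcal{S}_{D}^{k})^{-1}=(\hat{\mathcal{S}}_{D}^{k})^{-1}-(\hat{\mathcal{S}}_{D}^{k})^{-1}\mathcal{R}_{k}(\hat{\mathcal{S}}_{D}^{k})^{-1}+O\big((k^{2}\log k)^{2}\big).$$
Since $(k^{2}\log k)^{2}=O(k^{2})$, substituting $\mathcal{R}_{k}=(k^{2}\log k)\mathcal{S}_{D,1}^{(1)}+O(k^{2})$ and $(\hat{\mathcal{S}}_{D}^{k})^{-1}=\mathcal{P}_{\mathcal{H}^*_0}\widetilde{\mathcal{S}}_{D}^{-1}+\mathcal{U}_{k}$, and using $\mathcal{U}_{k}=O(1/\log k)$ to absorb every cross term carrying a factor $\mathcal{U}_{k}$ in the $k^{2}\log k$-contribution into $O(k^{2})$ (while the leading term retains both $\mathcal{P}_{\mathcal{H}^*_0}\widetilde{\mathcal{S}}_{D}^{-1}$ factors), produces exactly the claimed expansion.

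I do not anticipate a genuine obstacle: the only delicate points are the bookkeeping of orders of magnitude — in particular exploiting $\mathcal{U}_{k}=O(1/\log k)$ and $(k^{2}\log k)^{2}=O(k^{2})$ so that $\mathcal{P}_{\mathcal{H}^*_0}\widetilde{\mathcal{S}}_{D}^{-1}$ may replace $(\hat{\mathcal{S}}_{D}^{k})^{-1}$ inside the $k^{2}\log k$ term — together with invoking Lemma~\ref{lem-appendix11_2d heat} for the convergence and the uniform tail bound on $\mathcal{R}_{k}$, and Lemmas~\ref{addeq2} and \ref{addeq3} for the invertibility needed to set the argument up.
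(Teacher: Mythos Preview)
Your proposal is correct and follows essentially the same route as the paper: write $\mathcal{S}_D^{k}=\hat{\mathcal{S}}_D^{k}+\mathcal{G}_k$ with $\mathcal{G}_k=k^{2}\log k\,\mathcal{S}_{D,1}^{(1)}+O(k^{2})$, compute $(\hat{\mathcal{S}}_D^{k})^{-1}$ exactly via the rank-one structure of $\widetilde{\mathcal{S}}_D^{-1}\hat{\mathcal{S}}_D^{k}$ (the paper inverts $\Lambda_k=\mathcal{P}_{\mathcal{H}^*_0}-(\cdot,\varphi_0)_{\mathcal{H}^*}(\mathcal{S}_D[\varphi_0]+\tau_k)\varphi_0$ directly, which is precisely your Sherman--Morrison step), and then Neumann-expand $(I+(\hat{\mathcal{S}}_D^{k})^{-1}\mathcal{G}_k)^{-1}(\hat{\mathcal{S}}_D^{k})^{-1}$. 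Your bookkeeping of the remainder, including the use of $\mathcal{U}_k=O(1/\log k)$ to absorb the cross terms into $O(k^{2})$, is exactly what is needed to land on the stated formula.
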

\begin{proof}
We can write \eqref{series-s2d heat} as
$$
\mathcal{S}_{D}^{ k}=  \hat{\mathcal{S}}_{D}^ k+\mathcal{G}_ k,
$$
where $\mathcal{G}_ k =  k^2\log  k \mathcal{S}_{D,1}^{(1)} + O( k^2)$. From Lemma \ref{addeq2} and Lemma \ref{addeq3} we get the identity
$$
(\mathcal{S}_{D}^{ k})^{-1} = \left(I + (\hat{\mathcal{S}}_{D}^ k)^{-1}\mathcal{G}_ k\right)^{-1}(\hat{\mathcal{S}}_{D}^ k)^{-1}.
$$
Hence, we have $$ (\hat{\mathcal{S}}_{D}^ k)^{-1} = \underbrace{\left( \widetilde{\mathcal{S}}_{D}^{-1} \hat{\mathcal{S}}_{D}^ k\right)^{-1}}_{\Lambda_ k^{-1}}\widetilde{\mathcal{S}}_{D}^{-1}.$$
Here,
\beas
\Lambda_ k &=& I - (\cdot,\varphi_0)_{\mathcal{H}^*}(\mathcal{S}_D[\varphi_0]+1+\tau_ k)\varphi_0\\
&=& \mathcal{P}_{\mathcal{H}^*_0} - (\cdot,\varphi_0)_{\mathcal{H}^*}(\mathcal{S}_D[\varphi_0]+\tau_ k)\varphi_0.
\eeas
Then,
\beas
\Lambda_ k^{-1} = \mathcal{P}_{\mathcal{H}^*_0} - (\cdot,\varphi_0)_{\mathcal{H}^*}\frac{1}{\mathcal{S}_D[\varphi_0]+\tau_ k}\varphi_0,
\eeas
and therefore,
$$
(\hat{\mathcal{S}}_{D}^ k)^{-1} =  \mathcal{P}_{\mathcal{H}^*_0}\widetilde{\mathcal{S}}_{D}^{-1} - \dfrac{(\widetilde{\mathcal{S}}_{D}^{-1} [\cdot],\varphi_0)_{\mathcal{H}^*}}{\mathcal{S}_D[\varphi_0]+\tau_ k}\varphi_0.
$$
It is clear that $\|(\hat{\mathcal{S}}_{D}^ k)^{-1}\|_{\mathcal{L}\left(\mathcal{H}(\partial D),\mathcal{H}^*(\partial D)\right)}$ is bounded for $ k$ small. Since $||\mathcal{G}_k||_{\mathcal{L}\left(\mathcal{H}(\partial D),\mathcal{H}^*(\partial D)\right)}$ goes to zero as $k$ goes to zero, for $ k$ small enough,  we can  write
$$
(\mathcal{S}_{D}^{ k})^{-1} = (\hat{\mathcal{S}}_{D}^ k)^{-1}-(\hat{\mathcal{S}}_{D}^ k)^{-1}\mathcal{G}_ k(\hat{\mathcal{S}}_{D}^ k)^{-1}  +O\left( k^4(\log k)^2  \right),
$$
which yields the desired result.
\end{proof}

We now consider the expansion for the boundary integral operator $(\mathcal{K}_{D}^{ k})^*$. We have
\be \label{series-k2d heat}
(\mathcal{K}_{D}^{ k})^* = \mathcal{K}_D^* +\sum_{j=1}^{\infty} \left( k^{2j}\log  k\right) \mathcal{K}_{D, j}^{(1)}+\sum_{j=1}^{\infty}  k^{2j} \mathcal{K}_{D, j}^{(2)},
\ee
where
\beas
\mathcal{K}_{D, j}^{(1)} [\varphi](x) &=& \int_{\p D} b_j\dfrac{\partial |x-y|^{2j}}{\partial \nu(x)}\varphi(y)d\sigma(y),\\
\mathcal{K}_{D, j}^{(2)} [\varphi](x) &=& \int_{\p D} \dfrac{\partial \left( |x-y|^{2j}(b_j\log|x-y|+c_j)\right)}{\p \nu(x)}\varphi(y)d\sigma(y).
\eeas

\begin{lem} \label{lem-appendix132d heat} The norms
$\| \mathcal{K}_{D, j}^{(1)} \|_{ \mathcal{L}(\mathcal{H}^*(\p D), \mathcal{H}^*(\p D))}$ and $\| \mathcal{K}_{D, j}^{(2)} \|_{\mathcal{L}( \mathcal{H}^*(\p D), \mathcal{H}^*(\p D))}$ are uniformly bounded for $j \geq 1$.
Moreover, the series in (\ref{series-k2d heat}) is convergent in $\mathcal{L}({\mathcal{H}^*(\p D)}, {\mathcal{H}^*(\p D)})$.
\end{lem}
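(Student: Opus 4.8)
The plan is to estimate the Schwartz kernels of $\mathcal{K}_{D,j}^{(1)}$ and $\mathcal{K}_{D,j}^{(2)}$ directly, in the same spirit as the proof of Lemma~\ref{lem-appendix11_2d heat}, and then transfer the resulting bounds from $H^{-1/2}(\partial D)$ to $\mathcal{H}^*(\partial D)$ using the norm equivalence of Lemma~\ref{lem-K_star_properties2d heat}(i). Set $R := \mathrm{diam}(D)$ and recall that $|b_j| = \frac{1}{2\pi}\frac{1}{2^{2j}(j!)^2}$ and $|c_j| \le C|b_j|\big(1+\sum_{n=1}^{j} n^{-1}\big) \le C|b_j|\log(j+2)$. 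A direct computation gives the kernels $k_j^{(1)}(x,y) = 2j\,b_j\,(x-y)\cdot\nu(x)\,|x-y|^{2j-2}$ of $\mathcal{K}_{D,j}^{(1)}$ and $k_j^{(2)}(x,y) = (x-y)\cdot\nu(x)\,|x-y|^{2j-2}\big(2jb_j\log|x-y| + 2jc_j + b_j\big)$ of $\mathcal{K}_{D,j}^{(2)}$.

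First I would establish the key estimate $\sup_{x\in\partial D}\|k_j^{(i)}(x,\cdot)\|_{H^{1/2}(\partial D)}\le M$ with $M$ independent of $j$, for $i=1,2$. Since $\partial D$ is bounded and of class $\mathcal{C}^{1,\alpha}$ one has $\|f\|_{H^{1/2}(\partial D)}\le C\|f\|_{H^1(\partial D)}$, so it suffices to bound $f=k_j^{(i)}(x,\cdot)$ together with one tangential derivative, uniformly in $x$ and $j$. For $i=1$, $y\mapsto k_j^{(1)}(x,y)$ is, for fixed $x$, a polynomial of degree $2j-1$ in $y$; it and its tangential derivative are pointwise bounded by a polynomial $P(j)$ times $|b_j|\max(1,R)^{2j}$, and $P(j)\max(1,R)^{2j}/(2^{2j}(j!)^2)$ is a bounded sequence (it even tends to $0$), giving the uniform bound. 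For $i=2$ the kernel carries in addition the factor $\log|x-y|$, but because $j\ge 1$ this log is always accompanied by $|x-y|^{2j-1}$ with $2j-1\ge 1$; hence $k_j^{(2)}(x,\cdot)$ is continuous on $\partial D$ and its tangential derivative is dominated by a polynomial in $j$ times $|b_j|R^{2j-2}\big(|\log|x-y||+1\big)$, whose $L^2_y(\partial D)$ norm is at most $\sup_{x}\|\log|x-\cdot|\|_{L^2(\partial D)} + |\partial D|^{1/2}<\infty$ (finite, depending only on $D$); the factorial decay of $b_j$ and $c_j$ again absorbs the polynomial-in-$j$ loss.

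Granting this, for $\varphi\in H^{-1/2}(\partial D)$ one has $\mathcal{K}_{D,j}^{(i)}[\varphi](x) = \big(\varphi,\,k_j^{(i)}(x,\cdot)\big)_{-\frac{1}{2},\frac{1}{2}}$, so $\|\mathcal{K}_{D,j}^{(i)}[\varphi]\|_{L^\infty(\partial D)}\le M\|\varphi\|_{H^{-1/2}}$; composing with the fixed continuous embeddings $L^\infty(\partial D)\hookrightarrow L^2(\partial D)\hookrightarrow H^{-1/2}(\partial D)$ yields $\|\mathcal{K}_{D,j}^{(i)}\|_{\mathcal{L}(H^{-1/2}(\partial D))}\le CM$, and the equivalence $\mathcal{H}^*(\partial D)\simeq H^{-1/2}(\partial D)$ of Lemma~\ref{lem-K_star_properties2d heat}(i) then gives $\|\mathcal{K}_{D,j}^{(1)}\|_{\mathcal{L}(\mathcal{H}^*(\partial D))},\ \|\mathcal{K}_{D,j}^{(2)}\|_{\mathcal{L}(\mathcal{H}^*(\partial D))}\le M'$ with $M'$ independent of $j$, which is the first assertion. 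For the second, with this $M'$ the series \eqref{series-k2d heat} is dominated in operator norm, for $|k|<1$, by $M'(|\log k|+1)\sum_{j\ge 1}|k|^{2j} = M'(|\log k|+1)\frac{|k|^2}{1-|k|^2}<\infty$, hence converges absolutely in $\mathcal{L}(\mathcal{H}^*(\partial D))$.

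The step I expect to be the main obstacle is the uniform-in-$j$ kernel estimate: one must carefully weigh the polynomial-in-$j$ loss coming from tangential differentiation and from the factor $\max(1,R)^{2j}$ (with $R$ possibly larger than $1$) against the factorial decay of $b_j$ and $c_j$, and deal with the logarithmic singularity in $k_j^{(2)}$ — which is harmless precisely because $j\ge 1$ supplies the compensating power $|x-y|^{2j-1}$, leaving an $L^2$ kernel rather than a genuinely singular one. Everything else is bookkeeping, relying only on $\partial D$ being bounded and of class $\mathcal{C}^{1,\alpha}$ (so that $|\nu|=1$ and the tangential derivative on $\partial D$ is well defined) together with the norm equivalence already proved.
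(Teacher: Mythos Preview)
The paper states Lemma~\ref{lem-appendix132d heat} (like the companion Lemma~\ref{lem-appendix11_2d heat}) without proof, so there is no argument in the paper to compare your proposal against. Your approach --- direct kernel estimates on $\partial D$, duality to land in $L^\infty\hookrightarrow H^{-1/2}$, then the norm equivalence $\mathcal{H}^*(\partial D)\simeq H^{-1/2}(\partial D)$ from Lemma~\ref{lem-K_star_properties2d heat}(i) --- is the natural one and is essentially correct.

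One point deserves a little more care than you give it. When you differentiate $k_j^{(2)}(x,\cdot)$ tangentially in $y$, the derivative of $\log|x-y|$ contributes a factor $|x-y|^{-1}$, so for $j=1$ the naive bookkeeping produces a term of size $|(x-y)\cdot\nu(x)|\,|x-y|^{-1}$ rather than the $|\log|x-y||+1$ bound you quote. This is still harmless, because on a $\mathcal{C}^{1,\alpha}$ boundary one has $(x-y)\cdot\nu(x)=O(|x-y|^{1+\alpha})$, so the term is actually $O(|x-y|^{\alpha})$ and hence bounded; for $j\ge 2$ the extra power $|x-y|^{2j-2}$ already absorbs it. It would strengthen your write-up to make this cancellation explicit rather than folding it silently into the ``polynomial in $j$'' constant. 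With that caveat, the uniform bound and the convergence of the series for $|k|<1$ follow exactly as you outline.
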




\begin{thebibliography} {99}


%
%
%
%
%
%


\bibitem{pierre}
 H. Ammari, Y. Deng, and P. Millien,
Surface plasmon resonance of nanoparticles and applications in imaging, Arch. Ration.
 Mech. Anal.,   220 (2016), 109--153.

%
\bibitem{book3}  H. Ammari, J. Garnier, W. Jing, H. Kang, M. Lim, K. S\o lna, and H. Wang,
\textsl{Mathematical and Statistical Methods for Multistatic Imaging}, Lecture Notes in Mathematics, Volume 2098, Springer, Cham, 2013.
%
%
%
\bibitem{book2} {H. Ammari and H. Kang}, \textsl{Polarization and Moment
Tensors with Applications to Inverse Problems and Effective Medium
Theory}, Applied Mathematical Sciences, Vol. 162, Springer-Verlag,
New York, 2007.      

\bibitem{BoundaryLayerHelmholtz} H. Ammari, H. Kang, Boundary Layer Techniques for Solving the Helmholtz Equation in the Presence of Small Inhomogeneities. Journal of Mathematical Analysis and Applications 296 (2004), no. 1, 190-208.
%
%
%
\bibitem{matias} H. Ammari, P. Millien, M. Ruiz, and H. Zhang, 
Mathematical analysis of plasmonic nanoparticles: the scalar case,  Arch. Rational Mech. Anal., DOI: 10.1007/s00205-017-1084-5. 

\bibitem{algebraic} H. Ammari,  M. Putinar, M. Ruiz, S. Yu, and H. Zhang,
Shape reconstruction of nanoparticles from their associated  plasmonic resonances, arXiv: 1602.05268.  

\bibitem{matias2} H. Ammari,  M. Ruiz, S. Yu, and H. Zhang,  Mathematical analysis of plasmonic resonances for nanoparticles: the full Maxwell equations,  J. Differential Equations,
 261 (2016), 3615--3669.  


\bibitem{kang1} K. Ando and H. Kang, Analysis of plasmon resonance on smooth domains using spectral properties of the Neumann-Poincar\'e operator, J. Math. Anal. Appl., 435 (2016), 162--178.


\bibitem{hyeonbae} K. Ando, H. Kang, and H. Liu, Plasmon resonance with finite frequencies: a validation of the quasi-static approximation for diametrically small inclusions, SIAM J. Appl. Math., 
 76 (2016),  731--749.


\bibitem{baffou2010mapping} {G. Baffou, C. Girard, and R.  Quidant},
{Mapping heat origin in plasmonic structures},
{Phys. Rev. Lett.},
{104} (2010),
{136805}.

\bibitem{heat2} {G. Baffou, R.  Quidant, and C. Girard}, Heat generation in plasmonic nanostructures: Influence of morphology, Appl. Phys. Lett., 94 (2009), 153109. 

%
%
%
%
%
\bibitem{triki} E. Bonnetier and F. Triki, On the spectrum of the Poincar\'e variational problem for two close-to-touching inclusions in 2D, Arch. Ration. Mech. Anal.,  209  (2013),   541--567.
%
%
%
%

\bibitem{CostabelHeat} M. Costabel, Boundary integral operators for the heat equation, Int. Eqs. Oper. Theo., 13 (1990), 498--552.


\bibitem{heat1} A.O. Govorov and H.H. Richardson, Generating heat with metal nanoparticles, Nanotoday, 2 (2007), 30--38. 

\bibitem{Gri12}
D. Grieser, The plasmonic eigenvalue problem, Rev. Math. Phys.  26  (2014),   1450005.
%
%

\bibitem{HohageSayas} T. Hohage and F. Sayas, Numerical solution of a heat diffusion problem by boundary element methods using the Laplace transform, Numer. Math., 
102 (2005), 67--92.


\bibitem{plasmon4} P.K. Jain, K.S. Lee, I.H. El-Sayed, and M.A. El-Sayed, Calculated absorption and scattering properties of gold nanoparticles of different size, shape, and composition: Applications in biomedical imaging and biomedicine, J. Phys. Chem. B, 110 (2006), 7238--7248.
%
%
%
\bibitem{Gil} M. I. Gil, \textsl{Norm Estimations for Operator Valued Functions and Applications},
Vol. 192. CRC Press, 1995.

\bibitem{paraxial} J.E. Greivenkamp,  \textsl{Field Guide to Geometrical Optics}, SPIE Field Guides. 1.,  SPIE. 19--20, 2004.


\bibitem{kang2} H. Kang, K. Kim, H. Lee, and J. Shin, Spectral properties of the Neumann-Poincar\'e operator and uniformity of estimates for the conductivity equation with complex coefficients, 
 J. Lond. Math. Soc.,  93 (2016),  519--545. 
%
%
\bibitem{kang3} H. Kang, M. Lim, and S. Yu, Spectral resolution of the Neumann-Poincar\'e operator on intersecting disks and analysis of plamson resonance, arXiv:1501.02952.


\bibitem{LubichSchneider} C. Lubich and R. Schneider, Time discretizations of parabolic boundary integral equations,
Numer. Math., 63 (1992), 455--481. 

\bibitem{heat3} I. Marangon, A.A.K. Silva, T. Guilbert, J. Kolosnjaj-Tabi, C. Marchiol, S. Natkhunarajah, F. Chamming's, C. M\'enard-Moyon, A. Bianco, J.L. Gennisson, G. Renault, and F. Gazeau,
Tumor stiffening, a key determinant of tumor progression, is reversed by nanomaterial-induced photothermal therapy, Theranostics, 7 (2017), 329--343. 

\bibitem{boccara} A. Nahas, M. Varna, E. Fort, and A.C. Boccara, Detection of plasmonic nanoparticles with full field-OCT: optical and photothermal detection, Biomed Opt Express., 5 (2014), 3541--3546.


\bibitem{prost} A. Prost and E. Bossy, Photoacoustic generation by a gold nanosphere: from the linear to the nonlinear thermoelastic regime, Phys. Rev. B,
92 (2015), 115450. 

\bibitem{heat4} Z. Qin, Y. Wang, J. Randrianalisoa, V. Raeesi, W.C.W. Chan, W. Lipiński, and  J.C. Bischof, Quantitative comparison of photothermal heat generation between gold nanospheres and nanorods,
Scientific Reports, 6 (2016), 29836. 

%
%
%
%
%
%
%
%
%
%
%
%
%
%
%
\bibitem{SC10}
D. Sarid and W. A. Challener, \textsl{Modern Introduction to Surface Plasmons: Theory, Mathematical Modeling, and Applications},
Cambridge University Press, New York, 2010.


\bibitem{triki2} F. Triki and M. Vauthrin,  Mathematical modelization of the Photoacoustic effect generated by the heating of metallic nanoparticles, arXiv:1701.06396. 
    

%




\end{thebibliography}
\end{document}